\documentclass[a4paper, 11pt]{article}

\usepackage[a4paper, total={6in, 8in}]{geometry}

\usepackage{mathtools, amsmath, amsfonts, amssymb}
\usepackage{stmaryrd}
\usepackage{amsthm}
\usepackage[table]{xcolor}
\usepackage[Tol]{colorblind}
\usepackage{enumitem}
\usepackage[
    backend=biber,
    style=alphabetic,
    sorting=nyt,
    doi=false,
    url=false,
    isbn=false,
    maxbibnames=10,
]{biblatex}
\usepackage{tikz}
\usepackage{tikz-cd}
\usepackage{graphicx}
\usepackage{caption}
\usepackage{subcaption}
\usepackage{hyperref}
\usepackage{cleveref}
\usepackage{supertabular}
\usepackage[Export]{adjustbox}

\theoremstyle{plain}
    \newtheorem{thm}{Theorem}
    \newtheorem{lem}[thm]{Lemma}
    \newtheorem{prop}[thm]{Proposition}

\theoremstyle{definition}
    \newtheorem{defn}[thm]{Definition}

\theoremstyle{remark}
    \newtheorem*{rmk}{Remark}

\DeclareMathOperator{\id}{id}

\DeclareMathOperator{\CKh}{CKh}

\DeclareMathOperator{\eqKh}{\widetilde{Kh}}
\DeclareMathOperator{\CBN}{CBN}
\DeclareMathOperator{\BN}{BN}
\DeclareMathOperator{\eqCBN}{\widetilde{CBN}}
\DeclareMathOperator{\eqBN}{\widetilde{BN}}

\newcommand{\bn}[1]{\llbracket#1\rrbracket}
\newcommand{\eqbn}[1]{\widetilde{\llbracket#1\rrbracket}}

\DeclareMathOperator{\Cob}{Cob}
\DeclareMathOperator{\Cobd}{Cob_\bullet}
\newcommand{\Cobdl}{\Cobd_{/l}}
\DeclareMathOperator{\Mat}{Mat}
\DeclareMathOperator{\Kom}{Kom}
\newcommand{\Komh}{\Kom_{/h}}

\DeclareMathOperator{\Kobd}{Kob_\bullet}
\newcommand{\Kobdh}{\Kobd_{/h}}
\DeclareMathOperator{\Hom}{Hom}

\DeclareMathOperator{\Cone}{Cone}

\newcommand{\bC}{\mathbf{C}}

\newcommand{\FF}{\mathbb{F}}
\newcommand{\RR}{\mathbb{R}}
\newcommand{\ZZ}{\mathbb{Z}}

\newcommand{\cC}{\mathcal{C}}

\newcommand{\ii}{\mathbf{1}}
\newcommand{\xx}{\mathbf{x}}

\newcommand{\eqC}{\widetilde{\bC}}
\newcommand{\equ}{\widetilde{u}}
\newcommand{\ord}{\mathrm{ord}}
\newcommand{\eqord}{\widetilde{\mathrm{ord}}}

\newcommand{\eqf}{\widetilde{f}}
\newcommand{\eqg}{\widetilde{g}}
\newcommand{\eqh}{\widetilde{h}}

\newcommand{\seq}{\widetilde{sg}_4}
\newcommand{\us}{\overline{s}}
\newcommand{\ls}{\underline{s}}

\newcommand{\gz}{\textcolor{gray}{0}}

\title{Equivariant Unknotting Number and\\ Involutive Khovanov Homology}
\author{KeeTaek Kim}
\date{} % last update: April 10th

\begin{document}
    \maketitle
    \begin{abstract}
    We demonstrate that the equivariant unknotting number $\equ(K)$ of a strongly invertible knot $K$ is bounded below by the $H$-torsion order $\eqord(K)$ of the involutive Bar-Natan homology $\eqBN(K)$.
    This result serves as an equivariant analogue to the bound established by Alishahi \cite{Alishahi:2019}.
    As an application, we identify five strongly invertible prime knots with crossing numbers at most $9$ for which the strict inequality $u(K) < \equ(K)$ holds.
\end{abstract}

    \section{Introduction}\label{sec:intro}

In 2000, Mikhail Khovanov introduced a bigraded link homology theory that categorifies the Jones polynomial \cite{Khovanov:2000}, now widely known as Khovanov homology.
Subsequently, Bar-Natan developed a deformed version of this theory \cite{Bar-Natan:2005}. While the Khovanov theory associates a link $K$ with a chain complex $\CKh(K)$ over a unital commutative ring $R$, the Bar-Natan theory utilizes a chain complex $\CBN(K)$ over the polynomial ring $R[H]$.

Khovanov and Bar-Natan homologies have become essential tools in low-dimensional topology and knot theory. For instance, when $R$ is a field, the Rasmussen $s$-invariant---derived from the quantum grading of the free part of Bar-Natan homology---was used to provide a combinatorial proof of the Milnor conjecture \cite{Rasmussen:2010} and to show that the Conway knot is not slice \cite{Piccirillo:2020}.

The torsion part of Bar-Natan homology also yields significant geometric information.
Any knot $K$ can be transformed into the unknot $U$ via an isotopy that allows for self-intersections at discrete times.
The \emph{unknotting number} $u(K)$ is defined as the minimum number of such self-intersections required to unknot $K$. In \cite{Alishahi:2019}, Akram Alishahi demonstrated that the maximal $H$-torsion order $\ord(K)$ of the Bar-Natan homology $\BN(K)$ with $\FF=\ZZ/2\ZZ$ coefficients provides a lower bound for the unknotting number $u(K)$.
This work also has been extented to the rational unknotting number; see \cite{Lewark-Marino-Zibrowius:2024, Iltgen-Lewark-Marino:2025}.

This paper extends Alishahi's work to the equivariant setting.
A \emph{strongly invertible knot} is a knot $K$ equipped with an orientation-preserving involution $\tau$ on $S^3$ such that $\tau(K) = r(K)$, where $r(K)$ is the reverse of $K$.
For a strongly invertible knot $(K,\tau)$, we consider the \emph{equivariant unknotting number} $\equ(K)$, which is the minimum number of self-intersections required during a $\tau$-invariant unknotting process; see \Cref{subsec:equ} for more details.
In \cite{Sano:2025}, Taketo Sano developed the involutive versions of Khovanov and Bar-Natan homology for strongly invertible knots, which we denote by $\eqKh(K)$ and $\eqBN(K)$ respectively, and refer to as \emph{involutive Khovanov homology} and \emph{involutive Bar-Natan homology}\footnote{In other literature, the notations $\eqKh(K)$ and $\eqBN(K)$ are frequently used to denote \emph{reduced} version of Khovanov and Bar-Natan homologies of $K$. In this paper, we do not treat the reduced theories; instead, equivariant invariants are denoted with a tilde.}.
By defining a maximal $H$-torsion order $\eqord(K)$ for $\eqBN(K)$, we establish the following analogous theorem:

\begin{thm}\label{thm:main}
    For a strongly invertible knot $K$, $\eqord(K) \leq \equ(K)$.
\end{thm}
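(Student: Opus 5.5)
The plan follows Alishahi's argument, made equivariant. First I decompose a $\tau$-invariant unknotting sequence realizing $\equ(K)$ into elementary equivariant moves. These are equivariant Reidemeister moves, which induce equivariant homotopy equivalences of Sano's involutive complexes, and equivariant crossing changes. By the definition of $\equ$ in \Cref{subsec:equ}, a $\tau$-invariant self-intersection event comes in one of two forms: a single crossing change at a crossing lying on the axis of $\tau$, or a pair of crossing changes at two crossings swapped by $\tau$ and performed simultaneously. Both forms count toward $\equ(K)$ as the number of self-intersections used, so a symmetric pair counts $2$. The goal is to show that each event changing $K$ to $K'$, and costing $k$ self-intersections, gives maps
\[
f\colon \eqCBN(K)\to\eqCBN(K'),\qquad g\colon \eqCBN(K')\to\eqCBN(K)
\]
that commute with the involutive structure up to homotopy and satisfy $g f\simeq H^{k}\,\id$ and $f g\simeq H^{k}\,\id$.

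Granting these maps, the theorem follows by composition. Along the whole sequence from $K$ to $U$ I obtain maps $F\colon\eqBN(K)\to\eqBN(U)$ and $G\colon\eqBN(U)\to\eqBN(K)$ with $G F = H^{\equ(K)}$ on homology. The unknot has $H$-torsion-free involutive homology. This should be checked directly from Sano's definition, where the involution on $\CBN(U)$ acts trivially, so the mapping-cone-type involutive complex is free after accounting for the $(1+Q)$-type grading. Hence any $H$-torsion class $x$ satisfies $F(x)=0$, and therefore $H^{\equ(K)}x = G F x = 0$, giving $\eqord(K)\le\equ(K)$. If $\eqBN(U)$ turns out to carry torsion of order $1$ coming from the involutive cone, I will instead bound $\eqord(K)\le \equ(K)+\eqord(U)$ and verify that the definition of $\eqord$ normalizes this away.

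For the on-axis crossing change, Alishahi's maps are built from the saddle (or twisted-cobordism) pair through the oriented resolution. When the crossing lies on the axis, its two resolutions are $\tau$-invariant, so the cobordism maps are built from $\tau$-invariant cobordisms and commute with $\tau_\#$ up to homotopy, as Sano shows for equivariant cobordisms. This gives $gf\simeq H\,\id$ equivariantly. For a symmetric pair, I compose the two crossing-change maps simultaneously, using a tensor/cube decomposition at both crossings. The composite is then the product of the two Alishahi maps, which are exchanged by $\tau$, so the pair is $\tau$-equivariant and $gf\simeq H^2$, consistent with cost $2$.

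The main obstacle is the homotopy-coherence of the involutive complex. Sano's $\eqCBN$ is presumably a cone of $1+\tau_\#$, or a $\ZZ[Q]/(Q^2)$-complex, so chain maps must commute with $\tau_\#$ up to a specified homotopy. The identity $gf\simeq H^k$ must also hold in the involutive category, which requires the Alishahi homotopy itself to be compatible with $\tau$. I would handle this by choosing Alishahi's homotopies locally, as cobordisms supported near the changed crossings, and by choosing these supports $\tau$-symmetrically. Equivariance then holds on the nose for the local pieces, and the remaining homotopies come only from the equivariant Reidemeister invariance, which Sano already supplies.
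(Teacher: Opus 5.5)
Your overall architecture matches the paper's: local maps for each elementary move with $gf\simeq H^t$ and $fg\simeq H^t$, then composition and torsion-freeness of the unknot. There are, however, two genuine gaps.

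\textbf{Missing Type C.} Your case analysis of equivariant crossing changes omits one of the three types, and that type is needed for the theorem. An on-axis self-intersection comes in two kinds. Either $\tau$ swaps the two strands (Type B), or $\tau$ preserves each strand, so the two fixed points of $K$ collide (Type C). Both count $1$ toward $\equ$. In a transvergent diagram, $\tau$ exchanges over- and under-strands, so only Type B appears as a change at an on-axis crossing. A Type C move instead turns two arcs that each cross the axis into a $\tau$-symmetric clasp, whose two off-axis crossings are exchanged by $\tau$. Neither your on-axis argument nor your symmetric-pair argument applies here, and the latter would only give $H^2$. Without this case you only bound the minimum of $2u_A+u_B$ over sequences avoiding Type C. That quantity is at least $\equ(K)$ and can be strictly larger, so the theorem does not follow. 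The paper handles Type C separately: it composes the Reidemeister II equivalence with Alishahi's map at one crossing and symmetrizes. Concretely, $f=\begin{bmatrix}\id&0&0&\alpha\end{bmatrix}$ with $\alpha=S\epsilon D_l$, and $g=\begin{bmatrix}D&0&0&\iota S\end{bmatrix}^T$. Then $fg=H\cdot\id$ holds on the nose, while $gf\simeq_h H\cdot\id$ needs the ($4$-Tu) relation.

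\textbf{Coherence fails for Type A (and C).} Your fix works for Type B but not for Type A or Type C. Symmetric local choices do make $f$ and $g$ strictly equivariant, and for Type B the homotopy $h=S$ also commutes with $I_\tau$. For a symmetric pair, though, the homotopy treats one crossing after the other. The paper's $h$ has components $H\cdot S_l$ and $S_rD_l$ into $\bn{K_{00}}$, so it is not $I_\tau$-equivariant, and over $\FF$ you cannot average. In fact no local equivariant $h$ exists at all:
\begin{itemize}
\item Let $a$ be the component $\bn{K_{10}}\to\bn{K_{00}}$ of such an $h$.
\item At a $\tau$-invariant resolution of the other crossings, equivariance forces the diagonal block of $dh+hd$ on $\bn{K_{00}}$ to be $aS_l+I_\tau(aS_l)I_\tau$.
\item After the TQFT, $I_\tau$ permutes the standard basis. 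So this block's diagonal entry at the $\tau$-fixed generator $\ii\otimes\cdots\otimes\ii$ is twice an entry of $aS_l$, which is $0$.
\item But $gf+H^2\cdot\id=D_lD_r+H^2\cdot\id$ has entry $H^2$ there.
\end{itemize}
What you actually need is the second-order datum of a coherent homotopy: a $k$ with $I_\tau h+hI_\tau=dk+kd$. The paper uses $k=S_lS_rI_\tau\colon\bn{K_{11}}\to\bn{K_{00}}$ for Type A, and entries $S\epsilon I_\tau$ and $\iota\epsilon I_\tau$ for Type C. This yields the cone homotopy $\begin{bmatrix}h&0\\k&h\end{bmatrix}$.

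The rest of your argument is fine. The unknot needs no hedge: $I_\tau$ is the identity cylinder on the one-circle diagram, so $\eqCBN(U)$ is free with zero differential.
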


We apply \Cref{thm:main} to investigate lower bounds for the equivariant unknotting numbers of strongly invertible prime knots with crossing numbers at most $9$ in Appendix.
Among these, we identify five examples where the equivariant unknotting number is strictly greater than the ordinary unknotting number.
We should note that these results can also be recovered via alternative methods from \cite{Boyle-Chen:2026}; see \Cref{prop:mainex} and the subsequent remarks for a detailed comparison.

The remainder of this paper is organized as follows.
In \Cref{sec:pre}, we provide a brief introduction to Bar-Natan homology, involutive Bar-Natan homology, and the equivariant unknotting number.
In \Cref{sec:thm}, we present the proof of \Cref{thm:main}.
Finally, in \Cref{sec:ex}, we exhibit five examples of prime strongly invertible knots with crossing numbers at most 9 for which the strict inequality $u(K) < \equ(K)$ holds.

\begin{center}\textbf{Acknowledgements}\end{center}
The author wishes to thank his advisor, JungHwan Park, for his constant support and guidance.
The author thanks Taketo Sano for helpful comments and for providing his computer program.
The author also thanks Wenzhao Chen for helpful suggestion; see the remark at the end of \Cref{subsec:typeCproof}.
The author is partially supported by the Samsung Science and Technology Foundation (SSTF-BA2102-02) and the NRF grant RS-2025-00542968.

    \section{Background}\label{sec:pre}

In this section, we briefly review the categories $\Kobd$ and $\Kobdh$, the formal Bar-Natan complex $\bn{K}$ for an involutive link $(K,\tau)$, and the definition of the involution $I_\tau$ on $\bn{K}$.

First, we establish some terminology used throughout this paper.
An \emph{involutive link} $(K,\tau)$ is a link $K$ equipped with an orientation-preserving involution $\tau$ on $S^3$ that fixes $K$ setwise\footnote{We will frequently omit the involution $\tau$ when representing an involutive link.}.
The involution $\tau$ may either preserve or reverse the orientation of each component.
In particular, if $K$ is a knot and $\tau$ reverses its orientation, $(K, \tau)$ is called a \emph{strongly invertible knot}.

The \emph{Smith conjecture}, which was proven to be true \cite{Waldhausen:1969, Morgan-Bass:1979}, states that the fixed-point set $\mathrm{Fix}(f)$ of a nontrivial orientation-preserving diffeomorphism $f \colon S^3 \to S^3$ of finite order must be an unknotted circle in $S^3$.
By virtue of this result, we may assume without loss of generality that the involution $\tau$ of an involutive link $(K,\tau)$ is given by a $180$-degree rotation about the $y$-axis (viewing $S^3$ as the one-point compactification of $\RR^3$). 
In this setting, a knot diagram projected onto the $xy$-plane is called a \emph{transvergent diagram}. 
We refer to the set of fixed points of the rotation, $\{y\text{-axis}\}=\mathrm{Fix}(\tau)$, as the \emph{axis} of the involutive link $(K,\tau)$.

Note that if the orientation of a component of an involutive link $(K,\tau)$ is reversed by $\tau$, that component must intersect the axis at exactly two points.
These points are pointwise fixed under $\tau$.
When we refer to the \emph{fixed points} of an involutive link, we mean the collection of these intersection points.

\subsection{Formal Bar-Natan complex}

In this subsection, we associate a link $K$ with a chain complex $\bn{K}$ in the category $\Kobdh$.
We begin by constructing the underlying category $\Kobdh$.

\begin{defn}
    The \emph{dotted cobordism category} $\Cobd$ is defined as follows:
    \begin{itemize}
        \item \textbf{Objects:} $1$-manifolds; i.e., disjoint unions of circles.
        \item \textbf{Morphisms:} A morphism from $O$ to $O'$ is a dotted cobordism.\footnote{We depict a morphism $\Sigma \colon O \to O'$ vertically, with the source $O$ at the top and the target $O'$ at the bottom. In equations, we follow standard functional composition; i.e., $gf$ signifies applying $f$ first, then $g$.} A \emph{dotted cobordism} is an orientable surface $\Sigma$ with boundary $\partial \Sigma = (-O) \sqcup O'$, decorated with finitely many dots. The surface may have multiple components, and dots can move freely within each component. Two dotted cobordisms are considered equivalent if they are homeomorphic relative to the boundary (preserving dot locations up to homeomorphism).
        \item \textbf{Composition:} Composition is defined by the concatenation of cobordisms.
    \end{itemize}
\end{defn}

An example of a dotted cobordism is shown in \Cref{fig:cobd_ex}.
We distinguish between individual circles within an object; thus, the two cobordisms in \Cref{fig:cobd_ex} are treated as distinct.

\begin{figure}
    \centering
    \begin{subfigure}{.48\linewidth}
        \centering
        \tikzset{every picture/.style={line width=0.75pt}} %set default line width to 0.75pt        

\begin{tikzpicture}[x=0.75pt,y=0.75pt,yscale=-1,xscale=1]
%uncomment if require: \path (0,120); %set diagram left start at 0, and has height of 120

%Curve Lines [id:da27444758309116235] 
\draw    (5,30) .. controls (5.2,20) and (35.2,20) .. (35,30) ;
%Curve Lines [id:da5367545611385904] 
\draw    (5,30) .. controls (5.2,40) and (35.2,40) .. (35,30) ;
%Curve Lines [id:da6756706808217197] 
\draw    (45,30) .. controls (45.2,20) and (75.2,20) .. (75,30) ;
%Curve Lines [id:da29314707012960795] 
\draw    (45,30) .. controls (45.2,40) and (75.2,40) .. (75,30) ;
%Curve Lines [id:da700297393905972] 
\draw    (85,30) .. controls (85.2,20) and (115.2,20) .. (115,30) ;
%Curve Lines [id:da5865444719234779] 
\draw    (85,30) .. controls (85.2,40) and (115.2,40) .. (115,30) ;
%Curve Lines [id:da42279770040571907] 
\draw  [dash pattern={on 0.84pt off 2.51pt}]  (25,90) .. controls (25.2,80) and (55.2,80) .. (55,90) ;
%Curve Lines [id:da9532386295173195] 
\draw    (25,90) .. controls (25.2,100) and (55.2,100) .. (55,90) ;
%Curve Lines [id:da19209298260082153] 
\draw  [dash pattern={on 0.84pt off 2.51pt}]  (65,90) .. controls (65.2,80) and (95.2,80) .. (95,90) ;
%Curve Lines [id:da6805476963829442] 
\draw    (65,90) .. controls (65.2,100) and (95.2,100) .. (95,90) ;
%Curve Lines [id:da23421182581932865] 
\draw    (35,30) .. controls (35.4,60.4) and (45.4,60.4) .. (45,30) ;
%Shape: Circle [id:dp6891174510630924] 
\draw  [fill={rgb, 255:red, 0; green, 0; blue, 0 }  ,fill opacity=1 ] (60,41.25) .. controls (60,40.56) and (60.56,40) .. (61.25,40) .. controls (61.94,40) and (62.5,40.56) .. (62.5,41.25) .. controls (62.5,41.94) and (61.94,42.5) .. (61.25,42.5) .. controls (60.56,42.5) and (60,41.94) .. (60,41.25) -- cycle ;
%Shape: Circle [id:dp16818256989346947] 
\draw  [fill={rgb, 255:red, 0; green, 0; blue, 0 }  ,fill opacity=1 ] (52.5,41.25) .. controls (52.5,40.56) and (53.06,40) .. (53.75,40) .. controls (54.44,40) and (55,40.56) .. (55,41.25) .. controls (55,41.94) and (54.44,42.5) .. (53.75,42.5) .. controls (53.06,42.5) and (52.5,41.94) .. (52.5,41.25) -- cycle ;
%Shape: Circle [id:dp5846053091162012] 
\draw  [fill={rgb, 255:red, 0; green, 0; blue, 0 }  ,fill opacity=1 ] (95,41.25) .. controls (95,40.56) and (95.56,40) .. (96.25,40) .. controls (96.94,40) and (97.5,40.56) .. (97.5,41.25) .. controls (97.5,41.94) and (96.94,42.5) .. (96.25,42.5) .. controls (95.56,42.5) and (95,41.94) .. (95,41.25) -- cycle ;
%Curve Lines [id:da048393825391680356] 
\draw    (5,30) .. controls (5.2,55.2) and (25.2,64.2) .. (25,90) ;
%Curve Lines [id:da7494561541250463] 
\draw    (75,30) .. controls (75.2,55.2) and (55.2,64.2) .. (55,90) ;
%Curve Lines [id:da3344776266249213] 
\draw    (85,30) .. controls (85.2,55.2) and (65.2,64.2) .. (65,90) ;
%Curve Lines [id:da7565226677710709] 
\draw    (115,30) .. controls (115.2,55.2) and (95.2,64.2) .. (95,90) ;

% Text Node
\draw (7,5.4) node [anchor=north west][inner sep=0.75pt]    {$O_{1}$};
% Text Node
\draw (47,5.4) node [anchor=north west][inner sep=0.75pt]    {$O_{2}$};
% Text Node
\draw (86,5.4) node [anchor=north west][inner sep=0.75pt]    {$O_{3}$};
% Text Node
\draw (26,100.4) node [anchor=north west][inner sep=0.75pt]    {$O_{1} '$};
% Text Node
\draw (66,100.4) node [anchor=north west][inner sep=0.75pt]    {$O_{2} '$};

\end{tikzpicture}
        \caption{A morphism $\Sigma_1$.}
        \label{fig:cobd_ex1}
    \end{subfigure}
    \hfill
    \begin{subfigure}{.48\linewidth}
        \centering
        \tikzset{every picture/.style={line width=0.75pt}} %set default line width to 0.75pt        

\begin{tikzpicture}[x=0.75pt,y=0.75pt,yscale=-1,xscale=1]
%uncomment if require: \path (0,120); %set diagram left start at 0, and has height of 120

%Curve Lines [id:da17591718005029433] 
\draw    (5,30) .. controls (5.2,20) and (35.2,20) .. (35,30) ;
%Curve Lines [id:da13536827763299397] 
\draw    (5,30) .. controls (5.2,40) and (35.2,40) .. (35,30) ;
%Curve Lines [id:da6247871594343669] 
\draw    (45,30) .. controls (45.2,20) and (75.2,20) .. (75,30) ;
%Curve Lines [id:da7014017174264878] 
\draw    (45,30) .. controls (45.2,40) and (75.2,40) .. (75,30) ;
%Curve Lines [id:da7006837870522422] 
\draw    (85,30) .. controls (85.2,20) and (115.2,20) .. (115,30) ;
%Curve Lines [id:da18234588869594248] 
\draw    (85,30) .. controls (85.2,40) and (115.2,40) .. (115,30) ;
%Curve Lines [id:da5027533906869032] 
\draw  [dash pattern={on 0.84pt off 2.51pt}]  (25,90) .. controls (25.2,80) and (55.2,80) .. (55,90) ;
%Curve Lines [id:da042540388636439275] 
\draw    (25,90) .. controls (25.2,100) and (55.2,100) .. (55,90) ;
%Curve Lines [id:da8851818334307817] 
\draw  [dash pattern={on 0.84pt off 2.51pt}]  (65,90) .. controls (65.2,80) and (95.2,80) .. (95,90) ;
%Curve Lines [id:da609359556587403] 
\draw    (65,90) .. controls (65.2,100) and (95.2,100) .. (95,90) ;
%Curve Lines [id:da5201149726738394] 
\draw    (75,30) .. controls (75.4,60.4) and (85.4,60.4) .. (85,30) ;
%Shape: Circle [id:dp8297009989029271] 
\draw  [fill={rgb, 255:red, 0; green, 0; blue, 0 }  ,fill opacity=1 ] (60,41.25) .. controls (60,40.56) and (60.56,40) .. (61.25,40) .. controls (61.94,40) and (62.5,40.56) .. (62.5,41.25) .. controls (62.5,41.94) and (61.94,42.5) .. (61.25,42.5) .. controls (60.56,42.5) and (60,41.94) .. (60,41.25) -- cycle ;
%Shape: Circle [id:dp8472805664173421] 
\draw  [fill={rgb, 255:red, 0; green, 0; blue, 0 }  ,fill opacity=1 ] (52.5,41.25) .. controls (52.5,40.56) and (53.06,40) .. (53.75,40) .. controls (54.44,40) and (55,40.56) .. (55,41.25) .. controls (55,41.94) and (54.44,42.5) .. (53.75,42.5) .. controls (53.06,42.5) and (52.5,41.94) .. (52.5,41.25) -- cycle ;
%Shape: Circle [id:dp9576033324216431] 
\draw  [fill={rgb, 255:red, 0; green, 0; blue, 0 }  ,fill opacity=1 ] (20,41.25) .. controls (20,40.56) and (20.56,40) .. (21.25,40) .. controls (21.94,40) and (22.5,40.56) .. (22.5,41.25) .. controls (22.5,41.94) and (21.94,42.5) .. (21.25,42.5) .. controls (20.56,42.5) and (20,41.94) .. (20,41.25) -- cycle ;
%Curve Lines [id:da026142933127613843] 
\draw    (5,30) .. controls (5.2,55.2) and (25.2,64.2) .. (25,90) ;
%Curve Lines [id:da10378483443869435] 
\draw    (115,30) .. controls (115.2,55.2) and (95.2,64.2) .. (95,90) ;
%Curve Lines [id:da4417723066881727] 
\draw    (35,30) .. controls (35.2,55.2) and (55.2,64.2) .. (55,90) ;
%Curve Lines [id:da02754468139418842] 
\draw    (45,30) .. controls (45.2,55.2) and (65.2,64.2) .. (65,90) ;

% Text Node
\draw (7,5.4) node [anchor=north west][inner sep=0.75pt]    {$O_{1}$};
% Text Node
\draw (47,5.4) node [anchor=north west][inner sep=0.75pt]    {$O_{2}$};
% Text Node
\draw (86,5.4) node [anchor=north west][inner sep=0.75pt]    {$O_{3}$};
% Text Node
\draw (26,100.4) node [anchor=north west][inner sep=0.75pt]    {$O_{1} '$};
% Text Node
\draw (66,100.4) node [anchor=north west][inner sep=0.75pt]    {$O_{2} '$};

\end{tikzpicture}
        \caption{A morphism $\Sigma_2$.}
        \label{fig:cobd_ex2}
    \end{subfigure}
    \caption{Examples of morphisms $\Sigma_1, \Sigma_2 \colon O_1 \sqcup O_2 \sqcup O_3 \to O_1' \sqcup O_2'$ in the category $\Cobd$.}
    \label{fig:cobd_ex}
\end{figure}

To introduce algebraic structure, we allow formal $\FF[H]$-linear sums of dotted cobordisms, such that $\Hom_{\Cobd}(O,O')$ becomes an abelian group where composition is bilinear.
This renders $\Cobd$ a preadditive category.
We continue to refer to this preadditive category as $\Cobd$.

Next, we give four local relations on the morphism set.

\begin{defn}\label{def:Cobdl}
    The category $\Cobdl$ is defined as the quotient category of $\Cobd$ by the following four local relations:
    \begin{itemize}
        \item (S) $\includegraphics[valign=c]{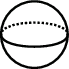}=0$
        \item (S$_\bullet$) $\includegraphics[valign=c]{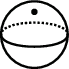}=1$
        \item (NC) $\includegraphics[valign=c,scale=.7]{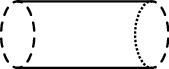}
\quad=\quad\includegraphics[valign=c,scale=.7]{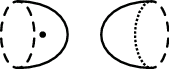}
+\includegraphics[valign=c,scale=.7]{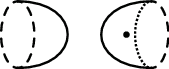}
+H\cdot \includegraphics[valign=c,scale=.7]{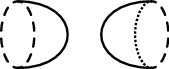}$
        \item ($H$-trading) $\includegraphics[valign=c,scale=.7]{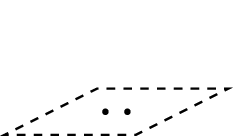}
\quad=\quad\includegraphics[valign=c,scale=.7]{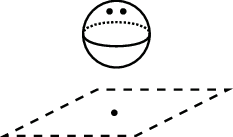}$
    \end{itemize}
\end{defn}

Again $\Cobdl$ is a preadditive category.
In $\Cobdl$, the relations (S$_\bullet$) and (NC) imply that taking the disjoint union with a two-dotted sphere is equivalent to scalar multiplication by $H$.
Furthermore, (NC) implies the following formula:
\begin{itemize}
    \item ($4$-Tu) \[\includegraphics[valign=c]{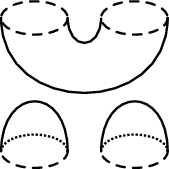}+\includegraphics[valign=c]{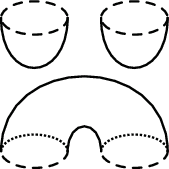}\quad=\quad\includegraphics[valign=c]{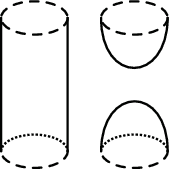}+\includegraphics[valign=c]{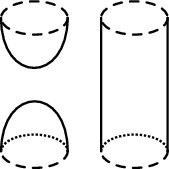}\]
\end{itemize}

\begin{rmk}
    The dashed boundaries in (NC), ($H$-trading), and ($4$-Tu) indicate the neighborhood of points on the cobordism rather than source or target objects. For example, the ($4$-Tu) relation can be interpreted as follows: let $\Sigma\in \Hom_{\Cobdl}(O,O')$ be a surface and choose four points $p_1, \dots, p_4$ on $\Sigma$. Let $\Sigma_{ij}$ denote the result of a surgery on $S^0=\{p_i, p_j\}$ for $i,j \in \{1, \dots, 4\}$. Then $\Sigma_{12} + \Sigma_{34} = \Sigma_{13} + \Sigma_{24}$ holds in $\Cobdl$.
\end{rmk}

Next, we construct the additive category $\Mat(\Cobdl)$.
For any preadditive category $\cC$, its \emph{additive closure} $\Mat(\cC)$ is defined such that objects are finite formal direct sums of objects in $\cC$, and morphisms are matrices of morphisms in $\cC$, with composition following matrix multiplication.
We then define $\Kobd = \Kom(\Mat(\Cobdl))$ as the category of chain complexes over $\Mat(\Cobdl)$, and $\Kobdh = \Komh(\Mat(\Cobdl))$ as the corresponding homotopy category. %%%%%%%%%%%%%%%%%%%%%%%%%%%%%%%%%%%%5

In the following paragraphs, we will define a chain complex $\bn{D_K}$ in $\Kobd$ for a given diagram $D_K$ of a link $K$, and it turns out to be a knot invariant, which will be denoted by $\bn{K}$ in the category $\Kobdh$.
Fix a diagram $D_K$ of a link $K$, and suppose $D_K$ has $n$ crossings.
We enumerate the crossings of $D_K$ from $1$ to $n$.
We can resolve each crossing in two ways, called \emph{0-resolution} and \emph{1-resolution}, as \Cref{fig:res}.
Then we have a total of $2^n$ ways of resolving all crossings of $D_K$.
Let $V = \{0,1\}^{\{1,\dots,n\}}$ be the set of all resolutions.
In practice, $v\in V$ will be written in length $n$ bits (cf. \Cref{fig:kh_trefoil}).
For each $v \in V$, let $D_K(v)$ be the diagram obtained by applying $v(i)$-resolution to the $i$-th crossing of $D_K$ for each $i=1,\ldots,n$.
Then we may consider $D_K(v)$ as an object in $\Cobdl$.
By gathering all objects, $\bigoplus_{v \in V} D_K(v)$ will be our complex in $\Kobd$.

\begin{figure}
    \centering
    \tikzset{every picture/.style={line width=0.75pt}} %set default line width to 0.75pt        

\begin{tikzpicture}[x=0.75pt,y=0.75pt,yscale=-1,xscale=1]
%uncomment if require: \path (0,105); %set diagram left start at 0, and has height of 105

%Shape: Ellipse [id:dp3858268166606158] 
\draw   (10,45) .. controls (10,25.67) and (25.67,10) .. (45,10) .. controls (64.33,10) and (80,25.67) .. (80,45) .. controls (80,64.33) and (64.33,80) .. (45,80) .. controls (25.67,80) and (10,64.33) .. (10,45) -- cycle ;
%Curve Lines [id:da581048838443869] 
\draw [line width=2.25]    (70,20) .. controls (50,39.8) and (40,39.8) .. (20,20) ;
%Curve Lines [id:da7387366121429697] 
\draw [line width=2.25]    (70,70) .. controls (50,49.8) and (40,49.8) .. (20,70) ;
%Shape: Ellipse [id:dp27866029533400993] 
\draw   (130,45) .. controls (130,25.67) and (145.67,10) .. (165,10) .. controls (184.33,10) and (200,25.67) .. (200,45) .. controls (200,64.33) and (184.33,80) .. (165,80) .. controls (145.67,80) and (130,64.33) .. (130,45) -- cycle ;
%Shape: Ellipse [id:dp3851465343095939] 
\draw   (250,45) .. controls (250,25.67) and (265.67,10) .. (285,10) .. controls (304.33,10) and (320,25.67) .. (320,45) .. controls (320,64.33) and (304.33,80) .. (285,80) .. controls (265.67,80) and (250,64.33) .. (250,45) -- cycle ;
%Curve Lines [id:da2062727645952278] 
\draw [line width=2.25]    (260,70) .. controls (280,50) and (280,39.8) .. (260,20) ;
%Curve Lines [id:da015001694442060454] 
\draw [line width=2.25]    (310,70) .. controls (290,49.8) and (290,40) .. (310,20) ;
%Straight Lines [id:da5875542529191169] 
\draw [line width=2.25]    (140,20) -- (190,70) ;
%Straight Lines [id:da11566020675286581] 
\draw [line width=2.25]    (140,70) -- (160,50) ;
%Straight Lines [id:da17847299134625627] 
\draw [line width=2.25]    (170,40) -- (190,20) ;
%Straight Lines [id:da558703879968662] 
\draw    (120,50) -- (92,50) ;
\draw [shift={(90,50)}, rotate = 360] [color={rgb, 255:red, 0; green, 0; blue, 0 }  ][line width=0.75]    (10.93,-3.29) .. controls (6.95,-1.4) and (3.31,-0.3) .. (0,0) .. controls (3.31,0.3) and (6.95,1.4) .. (10.93,3.29)   ;
%Straight Lines [id:da8418309477381755] 
\draw    (210,50) -- (238,50) ;
\draw [shift={(240,50)}, rotate = 180] [color={rgb, 255:red, 0; green, 0; blue, 0 }  ][line width=0.75]    (10.93,-3.29) .. controls (6.95,-1.4) and (3.31,-0.3) .. (0,0) .. controls (3.31,0.3) and (6.95,1.4) .. (10.93,3.29)   ;

% Text Node
\draw (101,32.4) node [anchor=north west][inner sep=0.75pt]    {$0$};
% Text Node
\draw (217,32.4) node [anchor=north west][inner sep=0.75pt]    {$1$};

\end{tikzpicture}
    \caption{Resolution of a crossing.}
    \label{fig:res}
\end{figure}
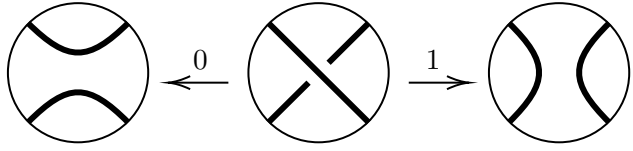

The differential on $\bigoplus_{v \in V} D_K(v)$ is defined as following.
Suppose $v, w \in V$ differ at exactly one crossing, say that $v(i) = 0$, $w(i)=1$, and $v(j)=w(j)$ for all $j \neq i$.
Then there is a cobordism $S_{v,w}$ from $D_K(v)$ to $D_K(w)$, which is a saddle near the $i$-th crossing.
We collect all such cobordisms $S_{v,w}$ and define a differential $d$ on $\bigoplus_{v \in V} D_K(v)$.
One can check that $d^2=0$ because each square
$
\begin{tikzcd}
    &D_K(v) \arrow[rd] & \\
    D_K(u) \arrow[ru]\arrow[rd] && D_K(w) \\
    &D_K(v') \arrow[ru] &
\end{tikzcd}
$
commutes, and we are working over $\FF$=$\ZZ/2\ZZ$.

\begin{figure}
    \centering
    \includegraphics[scale=.7]{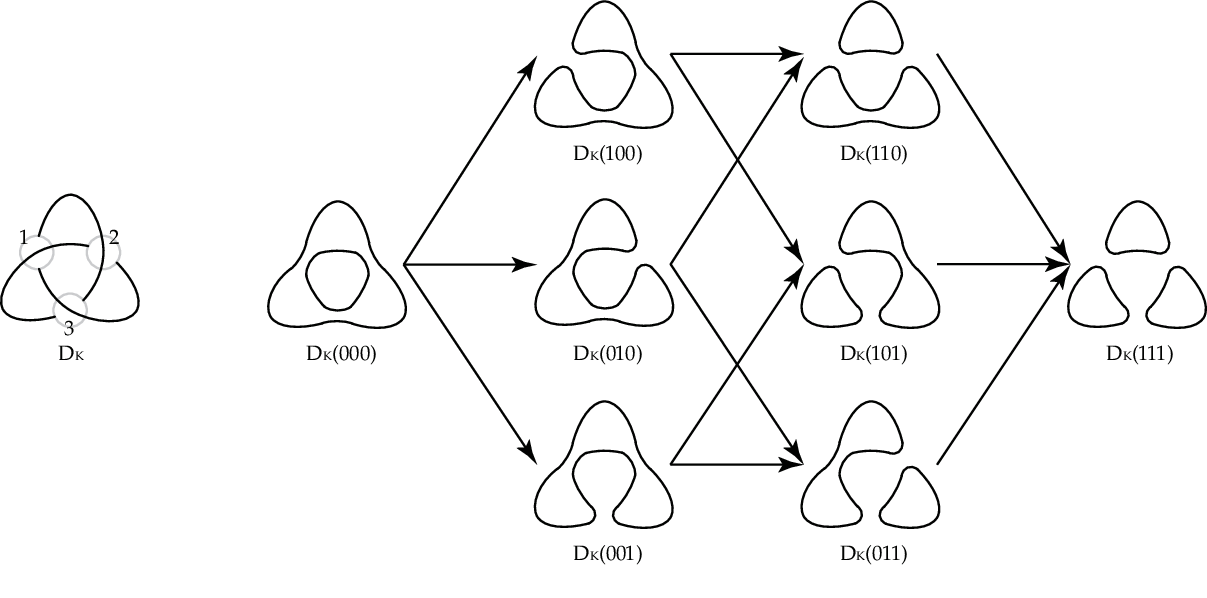}
    \caption{A formal Khovanov complex for the left-hand trefoil.}
    \label{fig:kh_trefoil}
\end{figure}

Now we define $\bn{D_K}$ to be the complex $\left(\bigoplus_{v \in V} D_K(v), d\right)$ in $\Kobd$.
The following theorem says that $\bn{D_K}$ actually can be seen as an invariant of the link $K$, when we consider it up to chain homotopy.

\begin{thm}[\cite{Khovanov:2000}, \cite{Bar-Natan:2005}]
    If two diagrams $D_1$ and $D_2$ are related by one Reidemeister move, then the complexes $\bn{D_1}$ and $\bn{D_2}$ are chain homotopy equivalent. In particular, the chain homotopy type of $\bn{D_K}$ is an invariant of the link $K$.
\end{thm}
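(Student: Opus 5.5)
The plan is to check invariance one Reidemeister move at a time. The second sentence of the statement then follows from Reidemeister's theorem: any two diagrams of $K$ are connected by a finite sequence of planar isotopies and moves R1, R2, R3, and chain homotopy equivalence is transitive. Planar isotopy changes nothing, since it induces isomorphic cube complexes.

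The key observation is that the construction is local. Following Bar-Natan, I would extend $\bn{\cdot}$ to tangle diagrams in a disk, with values in complexes over the cobordism category of tangles. Gluing tangles (planar algebra composition) is then realized by a tensor-product-type operation on complexes, and this operation preserves chain homotopy equivalence. It therefore suffices to prove that the two local tangles of each Reidemeister move have homotopy equivalent complexes, and then glue in the unchanged remainder of the diagram.

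For the local equivalences I would use Gaussian elimination in $\Kobd$ together with delooping. Delooping says that a circle $O$ in an object is isomorphic to two copies of the empty object in adjacent quantum gradings. The isomorphism is built from caps and cups with zero or one dot, and its correctness follows from (S), (S$_\bullet$) and (NC) with the $H$ term. Since $\Mat(\Cobdl)$ is additive, Gaussian elimination applies: any isomorphism component of a differential can be cancelled, up to homotopy.

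The moves are handled as follows.
\begin{itemize}
\item \textbf{R1.} The complex is a two-term cone $D(0)\to D(1)$, one side of which contains a small circle. Delooping that circle and cancelling the isomorphism component of the saddle leaves the single crossingless strand.
\item \textbf{R2.} The cube is a square, and one vertex contains a circle. Delooping it produces two isomorphism components that can be cancelled, leaving the resolution with two parallel arcs.
\item \textbf{R3.} After the R2-style simplification applied to one crossing's cone, both sides reduce to the same complex. This uses the standard fact that the cone of a map is determined up to isomorphism by the reduced pieces.
\end{itemize}
Throughout, we are over $\FF=\ZZ/2\ZZ$, so no sign assignments are needed.

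I expect R3 to be the main obstacle. It is the one place where a genuine matching of two simplified complexes is required, rather than a direct cancellation. Bookkeeping that the $H$-term in (NC) does not spoil the delooping maps is the other delicate point. I would check it by verifying directly that the two compositions of the delooping maps are the identity, using (NC) and ($H$-trading).
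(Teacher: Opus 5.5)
The paper does not prove this statement. It is quoted from Khovanov and Bar-Natan, and your outline follows Bar-Natan's approach.

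Your R1 and R2 steps are sound. With delooping maps $(\text{dotted cap}+H\cdot\text{cap},\ \text{cap})$ and $(\text{cup},\ \text{dotted cup})$, both composites are identities by (S), (S$_\bullet$), (NC) and the fact that a two-dotted sphere equals $H$, so the $H$-term causes no trouble.

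The gap is in R3. The ``standard fact'' you invoke, that a cone is determined up to isomorphism by its reduced pieces, is false: $\Cone(0\colon A\to A)=A\oplus A$, whereas $\Cone(\id_A)\simeq 0$. A cone depends on its connecting map. Showing that the connecting maps on the two sides of R3 agree after simplification is precisely the content of R3, and your sketch asserts it (``both sides reduce to the same complex'') without argument. Unlike in R1 and R2, the cancellation now happens inside a larger complex. Gaussian elimination therefore adds zig-zag correction terms (composites through the cancelled isomorphism) to the components of the differential between the two resolutions, and these are exactly what has to be compared.

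To close the gap, proceed as follows.
\begin{enumerate}
\item Resolve a crossing whose two strands both pass over, or both pass under, the third strand. Resolving the crossing between the top and bottom strands instead yields an arc crossed once over and once under, which is not an R2 configuration.
\item This writes $\bn{L}=\Cone(\Psi_L)$ and $\bn{R}=\Cone(\Psi_R)$. One resolution is planar isotopic on the two sides. The other contains an R2 configuration which, on both sides, reduces to the same crossingless tangle $T$ via R2 equivalences $F_L$, $F_R$.
\item The lemma you actually need is $\Cone(\Psi)\simeq\Cone(F\Psi)$ for a homotopy equivalence $F$. If the R2 piece is the source, use $\Cone(\Psi)\simeq\Cone(\Psi G)$ with $G$ a homotopy inverse of $F$.
\item Compute $F_L\Psi_L$ and $F_R\Psi_R$ explicitly. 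Each has two nonzero components. One is a saddle. The other is the saddle splitting off the R2 circle, then a cap on that circle, then a saddle, which is again a single saddle.
\item Check that these components agree under the isotopy.
\end{enumerate}

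Two smaller points. First, the paper's $\Cobd$ consists of abstract cobordisms between closed $1$-manifolds, so it has no tangle category or planar-algebra gluing. You can either run your argument in Bar-Natan's embedded category and push forward along the forgetful functor, or avoid tangles by writing $\bn{D}$ as iterated cones over the local crossings, as the paper does in \Cref{sec:thm}. Second, in R1 the surviving term sits in a shifted homological degree. This is harmless only because the paper works without gradings.
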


Now we define $\bn{K}$ to be the chain homotopy type of $\bn{D_K}$, which lies in the category $\Kobdh$.
$\bn{K}$ is called the \emph{formal Bar-Natan complex} of $K$.

\begin{rmk}
    We have introduced only the essential components of Khovanov homology theory required for our purposes.
    Specifically:
    \begin{itemize}
        \item There exists a more general version, called \emph{$U(2)$-equivariant Khovanov homology}, which utilizes a coefficient ring $R[h,t]$ with two variables.
        \item The category $\Kobdh$ and the invariant $\bn{K}$ can be defined for tangles.
        \item $\bn{K}$ admits a bigraded structure.
    \end{itemize}
    We do not address these aspects in this paper.
    Indeed, we will later define a chain map that does not necessarily preserve the homological grading.
    For a more general theory, we refer the reader to the literature, e.g., \cite{Khovanov:2000, Bar-Natan:2005, Kotelskiy-Watson-Zibrowius:2019, Kim-Sano:2025}.
\end{rmk}

\begin{rmk}
    In many related papers, the categories $\Cob$ and $\Cobd$ require embeddings of surfaces into $3$-dimensional space; objects are assumed to be crossingless diagrams on the disk $D^2$, and morphisms are dotted cobordisms embedded in $D^2 \times I$.
    In contrast, we use abstract cobordisms, which are sufficient to recover the Bar-Natan homology of a link.
\end{rmk}

\subsection{Recovering \texorpdfstring{$\BN(K)$}{BN(K)} from \texorpdfstring{$\bn{K}$}{[[K]]}}

In this subsection, we explain how to convert the formal complex $\bn{K}$ into a computable form—specifically, a chain complex of modules—by applying a 2d-TQFT.

A \emph{2 dimensional topological quantum field theory (2d-TQFT)} is a monoidal functor from $\Cob$ to $(R-\mathrm{Mod})$, the category of modules over a ring $R$.
It is well-known that there is an equivalence of categories between 2d-TQFTs and commutative Frobenius algebras; a set $A$ with four operations $\iota, m, \Delta, \epsilon$ satisfying the following:
\begin{itemize}
    \item $A$ is a commutative algebra with unit $\iota \colon R \to A$ and multiplication $m\colon A \otimes A \to A$;
    \item $A$ is a cocommutative coalgebra with comultiplication $\Delta \colon A \to A \otimes A$ and counit $\epsilon \colon A \to R$; and
    \item $m$ and $\Delta$ satisfy the \emph{Frobenius laws}:
    \[ (m \otimes \id) \circ (\id \otimes \Delta)
    = \Delta \circ m
    = (\id \otimes m) \circ (\Delta \otimes \id).\]
\end{itemize}

This correspondence arises from the presentation of the category $\Cob$.
Every morphism in $\Cob$ is generated by basic cobordisms, subject to several relations that describe isotopies between cobordisms (see \cite{Kock:2004}).
For a given commutative Frobenius algebra $A$, the corresponding 2d-TQFT sends a circle to $A$ and each basic cobordism to an operation of $A$.
\Cref{fig:TQFT_gen1} shows the basic cobordisms and their corresponding operations.

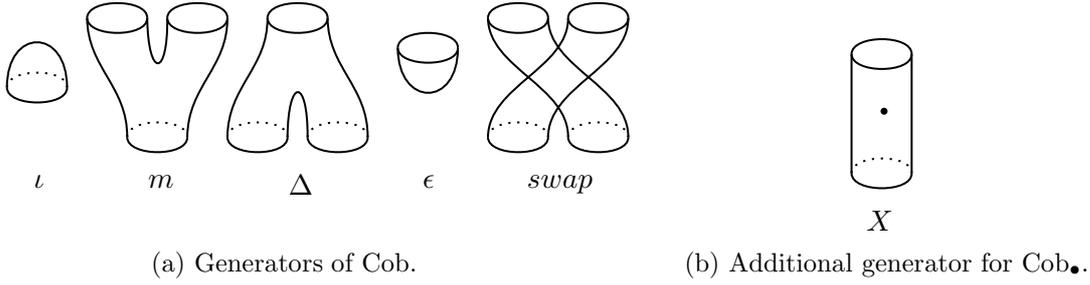
\begin{figure}
    \centering
    \begin{subfigure}{.48\linewidth}
        \centering
        \tikzset{every picture/.style={line width=0.75pt}} %set default line width to 0.75pt        

\begin{tikzpicture}[x=0.75pt,y=0.75pt,yscale=-1,xscale=1]
%uncomment if require: \path (0,119); %set diagram left start at 0, and has height of 119

%Curve Lines [id:da39451332159217845] 
\draw    (10,50) .. controls (10.2,60) and (40.2,60) .. (40,50) ;
%Curve Lines [id:da09287971750986512] 
\draw    (10,50) .. controls (10,19.6) and (40,19.6) .. (40,50) ;
%Curve Lines [id:da19140936109626105] 
\draw    (50,15) .. controls (50.2,5) and (80.2,5) .. (80,15) ;
%Curve Lines [id:da30823525929774165] 
\draw    (50,15) .. controls (50.2,25) and (80.2,25) .. (80,15) ;
%Curve Lines [id:da41529811440057407] 
\draw    (90,15) .. controls (90.2,5) and (120.2,5) .. (120,15) ;
%Curve Lines [id:da5272031586184814] 
\draw    (90,15) .. controls (90.2,25) and (120.2,25) .. (120,15) ;
%Curve Lines [id:da20268700982898213] 
\draw  [dash pattern={on 0.84pt off 2.51pt}]  (70,75) .. controls (70.2,65) and (100.2,65) .. (100,75) ;
%Curve Lines [id:da7139320391317516] 
\draw    (70,75) .. controls (70.2,85) and (100.2,85) .. (100,75) ;
%Curve Lines [id:da5526250639807065] 
\draw    (50,15) .. controls (50.2,40.2) and (70.2,49.2) .. (70,75) ;
%Curve Lines [id:da34907417133290986] 
\draw    (120,15) .. controls (120.2,40.2) and (100.2,49.2) .. (100,75) ;
%Curve Lines [id:da30001838175361595] 
\draw    (80,15) .. controls (80.4,45.4) and (90.4,45.4) .. (90,15) ;
%Curve Lines [id:da005652349812134805] 
\draw  [dash pattern={on 0.84pt off 2.51pt}]  (10,50) .. controls (10.2,40) and (40.2,40) .. (40,50) ;
%Curve Lines [id:da7370778336697331] 
\draw    (120,75) .. controls (120.2,85) and (150.2,85) .. (150,75) ;
%Curve Lines [id:da08797281000951074] 
\draw    (160,75) .. controls (160.2,85) and (190.2,85) .. (190,75) ;
%Curve Lines [id:da46227173550543954] 
\draw    (140,15) .. controls (140.2,5) and (170.2,5) .. (170,15) ;
%Curve Lines [id:da757995699130749] 
\draw    (140,15) .. controls (140.2,25) and (170.2,25) .. (170,15) ;
%Curve Lines [id:da9514881665847054] 
\draw    (150,75) .. controls (150.2,44.7) and (160.2,44.7) .. (160,75) ;
%Curve Lines [id:da39740542521769817] 
\draw  [dash pattern={on 0.84pt off 2.51pt}]  (120,75) .. controls (120.2,65) and (150.2,65) .. (150,75) ;
%Curve Lines [id:da6444558363113394] 
\draw  [dash pattern={on 0.84pt off 2.51pt}]  (160,75) .. controls (160.2,65) and (190.2,65) .. (190,75) ;
%Curve Lines [id:da7506208325078927] 
\draw    (205,30) .. controls (205.2,20) and (235.2,20) .. (235,30) ;
%Curve Lines [id:da3513696658062391] 
\draw    (205,30) .. controls (205.2,40) and (235.2,40) .. (235,30) ;
%Curve Lines [id:da46712932617300984] 
\draw    (205,30) .. controls (205.2,60.2) and (235.2,60.2) .. (235,30) ;
%Curve Lines [id:da7000046748923849] 
\draw    (250,15) .. controls (250.2,5) and (280.2,5) .. (280,15) ;
%Curve Lines [id:da5641257480667944] 
\draw    (250,15) .. controls (250.2,25) and (280.2,25) .. (280,15) ;
%Curve Lines [id:da3249201428051388] 
\draw    (290,15) .. controls (290.2,5) and (320.2,5) .. (320,15) ;
%Curve Lines [id:da795735103331056] 
\draw    (290,15) .. controls (290.2,25) and (320.2,25) .. (320,15) ;
%Curve Lines [id:da9797300111447778] 
\draw    (250,75) .. controls (250.2,85) and (280.2,85) .. (280,75) ;
%Curve Lines [id:da8589522805483782] 
\draw  [dash pattern={on 0.84pt off 2.51pt}]  (250,75) .. controls (250.2,65) and (280.2,65) .. (280,75) ;
%Curve Lines [id:da14225491806515433] 
\draw    (290,75) .. controls (290.2,85) and (320.2,85) .. (320,75) ;
%Curve Lines [id:da05014596741356636] 
\draw  [dash pattern={on 0.84pt off 2.51pt}]  (290,75) .. controls (290.2,65) and (320.2,65) .. (320,75) ;
%Curve Lines [id:da22860824729381146] 
\draw    (140,15) .. controls (140.2,40.2) and (120.2,49.2) .. (120,75) ;
%Curve Lines [id:da27846360261321623] 
\draw    (170,15) .. controls (170.2,40.2) and (190.2,49.2) .. (190,75) ;
%Curve Lines [id:da008993739872615425] 
\draw    (280,15) .. controls (280.2,40.2) and (320.2,49.2) .. (320,75) ;
%Curve Lines [id:da5848042565712377] 
\draw    (250,15) .. controls (250.2,40.2) and (290.2,49.2) .. (290,75) ;
%Curve Lines [id:da6039380834228587] 
\draw    (290,15) .. controls (290.2,40.2) and (250.2,49.2) .. (250,75) ;
%Curve Lines [id:da3453893686873105] 
\draw    (320,15) .. controls (320.2,40.2) and (280.2,49.2) .. (280,75) ;

% Text Node
\draw (22,92.4) node [anchor=north west][inner sep=0.75pt]    {$\iota $};
% Text Node
\draw (79,92.4) node [anchor=north west][inner sep=0.75pt]    {$m$};
% Text Node
\draw (149,92.4) node [anchor=north west][inner sep=0.75pt]    {$\Delta $};
% Text Node
\draw (216,92.4) node [anchor=north west][inner sep=0.75pt]    {$\epsilon $};
% Text Node
\draw (268,92.4) node [anchor=north west][inner sep=0.75pt]    {$swap$};

\end{tikzpicture}
        \caption{Generators of $\Cob$.}
        \label{fig:TQFT_gen1}
    \end{subfigure}
    \hfill
    \begin{subfigure}{.48\linewidth}
        \centering
        \tikzset{every picture/.style={line width=0.75pt}} %set default line width to 0.75pt        

\begin{tikzpicture}[x=0.75pt,y=0.75pt,yscale=-1,xscale=1]
%uncomment if require: \path (0,119); %set diagram left start at 0, and has height of 119

%Curve Lines [id:da3160888833526453] 
\draw    (10,15) .. controls (10.2,5) and (40.2,5) .. (40,15) ;
%Curve Lines [id:da20081579591125676] 
\draw    (10,15) .. controls (10.2,25) and (40.2,25) .. (40,15) ;
%Curve Lines [id:da6711805296387527] 
\draw  [dash pattern={on 0.84pt off 2.51pt}]  (10,75) .. controls (10.2,65) and (40.2,65) .. (40,75) ;
%Curve Lines [id:da8298130362972905] 
\draw    (10,75) .. controls (10.2,85) and (40.2,85) .. (40,75) ;
%Straight Lines [id:da8434902104552884] 
\draw    (10,15) -- (10,75) ;
%Straight Lines [id:da20141099095595194] 
\draw    (40,15) -- (40,75) ;
%Shape: Circle [id:dp06334516311385785] 
\draw  [fill={rgb, 255:red, 0; green, 0; blue, 0 }  ,fill opacity=1 ] (25,43.75) .. controls (25,43.06) and (25.56,42.5) .. (26.25,42.5) .. controls (26.94,42.5) and (27.5,43.06) .. (27.5,43.75) .. controls (27.5,44.44) and (26.94,45) .. (26.25,45) .. controls (25.56,45) and (25,44.44) .. (25,43.75) -- cycle ;

% Text Node
\draw (16,92.4) node [anchor=north west][inner sep=0.75pt]    {$X$};

\end{tikzpicture}
        \caption{Additional generator for $\Cobd$.}
        \label{fig:TQFT_gen2}
    \end{subfigure}
    \caption{Building blocks for $2$-dimensional (dotted) cobordisms. Each cobordism is labeled with a corresponding module homomorphism (here, $swap \colon A \otimes A \to A \otimes A$ maps $a\otimes b$ to $b\otimes a$). In the context of $\Cobd$ and Bar-Natan homology, the map $X \colon V \to V$ is defined by $a \mapsto m(a \otimes \xx)$.}
    \label{fig:TQFT_gen}
\end{figure}

Our commutative Frobenius algebra $V$ is:
\begin{itemize}
    \item $V$ is a free $\FF[H]$-module with two generators $\ii, \xx$,
    \item A unit $\iota \colon \FF[H] \to V$ and a multiplication $m \colon V \otimes V \to V$ is defined by
    \[ \iota(1) = \ii, \qquad m(\xx \otimes \xx) = H\cdot\xx, \]
    (i.e., $\ii$ is the multiplicative identity.)
    \item A comultiplication $\Delta \colon V \to V \otimes V$ and a counit $\epsilon \colon V \to \FF[H]$ is defined by
    \[ \begin{cases}
    \Delta(\ii) &= \ii\otimes\xx + \xx\otimes\ii + H\cdot\ii\otimes\ii \\
    \Delta(\xx) &= \xx\otimes\xx
    \end{cases}, \qquad
    \begin{cases}
        \epsilon(\ii)&=0\\
        \epsilon(\xx)&=1
    \end{cases}.\]
\end{itemize}

In our setting, we additionally account for dots.
We introduce one further generator: a dotted cylinder, as shown in \Cref{fig:TQFT_gen2}.
While dots introduce additional relations, it suffices to consider the `sliding' of a dot across each generator.
These relations are listed in \Cref{fig:TQFTd_rel}; one can easily verify that our algebra $V$ and the map $X \colon a \mapsto m(a \otimes \xx)$ (which corresponds to the dotted cylinder) satisfy these relations.

\begin{figure}
    \centering
    \begin{subfigure}{.32\linewidth}
        \centering
        \tikzset{every picture/.style={line width=0.75pt}} %set default line width to 0.75pt        

\begin{tikzpicture}[x=0.75pt,y=0.75pt,yscale=-1,xscale=1]
%uncomment if require: \path (0,119); %set diagram left start at 0, and has height of 119

%Curve Lines [id:da8510181941978792] 
\draw    (5,40) .. controls (5.2,50) and (35.2,50) .. (35,40) ;
%Curve Lines [id:da8402177837775909] 
\draw    (45,40) .. controls (45.2,50) and (75.2,50) .. (75,40) ;
%Curve Lines [id:da2727109209661429] 
\draw  [dash pattern={on 0.84pt off 2.51pt}]  (25,100) .. controls (25.2,90) and (55.2,90) .. (55,100) ;
%Curve Lines [id:da5059512943529285] 
\draw    (25,100) .. controls (25.2,110) and (55.2,110) .. (55,100) ;
%Curve Lines [id:da28290182921460305] 
\draw    (5,40) .. controls (5.2,65.2) and (25.2,74.2) .. (25,100) ;
%Curve Lines [id:da6427300485735756] 
\draw    (75,40) .. controls (75.2,65.2) and (55.2,74.2) .. (55,100) ;
%Curve Lines [id:da35870047994473675] 
\draw    (35,40) .. controls (35.4,70.4) and (45.4,70.4) .. (45,40) ;
%Curve Lines [id:da5475883997801523] 
\draw    (5,15) .. controls (5.2,5) and (35.2,5) .. (35,15) ;
%Curve Lines [id:da3953371606100192] 
\draw    (5,15) .. controls (5.2,25) and (35.2,25) .. (35,15) ;
%Curve Lines [id:da43950868149228495] 
\draw  [dash pattern={on 0.84pt off 2.51pt}]  (5,40) .. controls (5.2,30) and (35.2,30) .. (35,40) ;
%Curve Lines [id:da4578629852088597] 
\draw  [dash pattern={on 0.84pt off 2.51pt}]  (45,40) .. controls (45.2,30) and (75.2,30) .. (75,40) ;
%Straight Lines [id:da7905569471206292] 
\draw    (75,15) -- (75,40) ;
%Curve Lines [id:da14833628630074747] 
\draw    (45,15) .. controls (45.2,5) and (75.2,5) .. (75,15) ;
%Curve Lines [id:da12432878300253669] 
\draw    (45,15) .. controls (45.2,25) and (75.2,25) .. (75,15) ;
%Shape: Circle [id:dp9940561968578953] 
\draw  [fill={rgb, 255:red, 0; green, 0; blue, 0 }  ,fill opacity=1 ] (20,28.75) .. controls (20,28.06) and (20.56,27.5) .. (21.25,27.5) .. controls (21.94,27.5) and (22.5,28.06) .. (22.5,28.75) .. controls (22.5,29.44) and (21.94,30) .. (21.25,30) .. controls (20.56,30) and (20,29.44) .. (20,28.75) -- cycle ;
%Curve Lines [id:da935616096253106] 
\draw    (95,15) .. controls (95.2,5) and (125.2,5) .. (125,15) ;
%Curve Lines [id:da6973629864912507] 
\draw    (95,15) .. controls (95.2,25) and (125.2,25) .. (125,15) ;
%Curve Lines [id:da2159444588667465] 
\draw    (135,15) .. controls (135.2,5) and (165.2,5) .. (165,15) ;
%Curve Lines [id:da9393338513149248] 
\draw    (135,15) .. controls (135.2,25) and (165.2,25) .. (165,15) ;
%Curve Lines [id:da3900265269488217] 
\draw  [dash pattern={on 0.84pt off 2.51pt}]  (115,75) .. controls (115.2,65) and (145.2,65) .. (145,75) ;
%Curve Lines [id:da15993723055184084] 
\draw    (115,75) .. controls (115.2,85) and (145.2,85) .. (145,75) ;
%Curve Lines [id:da08772985514259157] 
\draw    (95,15) .. controls (95.2,40.2) and (115.2,49.2) .. (115,75) ;
%Curve Lines [id:da43370369892358096] 
\draw    (165,15) .. controls (165.2,40.2) and (145.2,49.2) .. (145,75) ;
%Curve Lines [id:da6215209841784702] 
\draw    (125,15) .. controls (125.4,45.4) and (135.4,45.4) .. (135,15) ;
%Straight Lines [id:da5758612525826792] 
\draw    (45,15) -- (45,40) ;
%Straight Lines [id:da027334585775759312] 
\draw    (5,15) -- (5,40) ;
%Straight Lines [id:da6453363730880582] 
\draw    (35,15) -- (35,40) ;
%Straight Lines [id:da9077626698082337] 
\draw    (145,75) -- (145,100) ;
%Straight Lines [id:da8666203371426177] 
\draw    (115,75) -- (115,100) ;
%Curve Lines [id:da22536815818505385] 
\draw    (115,100) .. controls (115.2,110) and (145.2,110) .. (145,100) ;
%Curve Lines [id:da037891263130173325] 
\draw  [dash pattern={on 0.84pt off 2.51pt}]  (115,100) .. controls (115.2,90) and (145.2,90) .. (145,100) ;
%Shape: Circle [id:dp04582227953626228] 
\draw  [fill={rgb, 255:red, 0; green, 0; blue, 0 }  ,fill opacity=1 ] (130,88.75) .. controls (130,88.06) and (130.56,87.5) .. (131.25,87.5) .. controls (131.94,87.5) and (132.5,88.06) .. (132.5,88.75) .. controls (132.5,89.44) and (131.94,90) .. (131.25,90) .. controls (130.56,90) and (130,89.44) .. (130,88.75) -- cycle ;

% Text Node
\draw (76,52.4) node [anchor=north west][inner sep=0.75pt]    {$=$};

\end{tikzpicture}
        \caption{}
        \label{fig:TQFTd_rel1}
    \end{subfigure}
    \hfill
    \begin{subfigure}{.32\linewidth}
        \centering
        \tikzset{every picture/.style={line width=0.75pt}} %set default line width to 0.75pt        

\begin{tikzpicture}[x=0.75pt,y=0.75pt,yscale=-1,xscale=1]
%uncomment if require: \path (0,119); %set diagram left start at 0, and has height of 119

%Curve Lines [id:da39388728009467966] 
\draw    (170,100) .. controls (170.2,110) and (200.2,110) .. (200,100) ;
%Curve Lines [id:da8808446628617085] 
\draw    (210,100) .. controls (210.2,110) and (240.2,110) .. (240,100) ;
%Curve Lines [id:da7437368909869113] 
\draw    (200,100) .. controls (200.2,69.7) and (210.2,69.7) .. (210,100) ;
%Curve Lines [id:da18946986627216322] 
\draw  [dash pattern={on 0.84pt off 2.51pt}]  (170,100) .. controls (170.2,90) and (200.2,90) .. (200,100) ;
%Curve Lines [id:da0049932671898373915] 
\draw  [dash pattern={on 0.84pt off 2.51pt}]  (210,100) .. controls (210.2,90) and (240.2,90) .. (240,100) ;
%Curve Lines [id:da6327854292782024] 
\draw    (190,40) .. controls (190.2,65.2) and (170.2,74.2) .. (170,100) ;
%Curve Lines [id:da03571995562315333] 
\draw    (220,40) .. controls (220.2,65.2) and (240.2,74.2) .. (240,100) ;
%Curve Lines [id:da6253220381881438] 
\draw    (190,40) .. controls (190.2,50) and (220.2,50) .. (220,40) ;
%Curve Lines [id:da7273399070462002] 
\draw  [dash pattern={on 0.84pt off 2.51pt}]  (190,40) .. controls (190.2,30) and (220.2,30) .. (220,40) ;
%Straight Lines [id:da9353789041398992] 
\draw    (220,15) -- (220,40) ;
%Curve Lines [id:da655715459308014] 
\draw    (190,15) .. controls (190.2,5) and (220.2,5) .. (220,15) ;
%Curve Lines [id:da07148210466499993] 
\draw    (190,15) .. controls (190.2,25) and (220.2,25) .. (220,15) ;
%Straight Lines [id:da9108258917620528] 
\draw    (190,15) -- (190,40) ;
%Curve Lines [id:da30481752261355355] 
\draw    (260,75) .. controls (260.2,85) and (290.2,85) .. (290,75) ;
%Curve Lines [id:da0394271226629217] 
\draw    (300,75) .. controls (300.2,85) and (330.2,85) .. (330,75) ;
%Curve Lines [id:da949196887894662] 
\draw    (290,75) .. controls (290.2,44.7) and (300.2,44.7) .. (300,75) ;
%Curve Lines [id:da6192771368429466] 
\draw  [dash pattern={on 0.84pt off 2.51pt}]  (260,75) .. controls (260.2,65) and (290.2,65) .. (290,75) ;
%Curve Lines [id:da2893483817550274] 
\draw  [dash pattern={on 0.84pt off 2.51pt}]  (300,75) .. controls (300.2,65) and (330.2,65) .. (330,75) ;
%Curve Lines [id:da4567164929785228] 
\draw    (280,15) .. controls (280.2,40.2) and (260.2,49.2) .. (260,75) ;
%Curve Lines [id:da357339787070278] 
\draw    (310,15) .. controls (310.2,40.2) and (330.2,49.2) .. (330,75) ;
%Curve Lines [id:da5533376495411564] 
\draw    (280,15) .. controls (280.2,5) and (310.2,5) .. (310,15) ;
%Curve Lines [id:da6867714491913559] 
\draw  [dash pattern={on 0.84pt off 2.51pt}]  (260,100) .. controls (260.2,90) and (290.2,90) .. (290,100) ;
%Straight Lines [id:da1376472717584153] 
\draw    (290,75) -- (290,100) ;
%Straight Lines [id:da8920456658557211] 
\draw    (260,75) -- (260,100) ;
%Curve Lines [id:da9876910726178577] 
\draw  [dash pattern={on 0.84pt off 2.51pt}]  (300,100) .. controls (300.2,90) and (330.2,90) .. (330,100) ;
%Straight Lines [id:da5458478421076903] 
\draw    (330,75) -- (330,100) ;
%Straight Lines [id:da6721454688604315] 
\draw    (300,75) -- (300,100) ;
%Curve Lines [id:da9257200711126143] 
\draw    (260,100) .. controls (260.2,110) and (290.2,110) .. (290,100) ;
%Curve Lines [id:da6317105150622216] 
\draw    (300,100) .. controls (300.2,110) and (330.2,110) .. (330,100) ;
%Shape: Circle [id:dp5426039044269577] 
\draw  [fill={rgb, 255:red, 0; green, 0; blue, 0 }  ,fill opacity=1 ] (205,28.75) .. controls (205,28.06) and (205.56,27.5) .. (206.25,27.5) .. controls (206.94,27.5) and (207.5,28.06) .. (207.5,28.75) .. controls (207.5,29.44) and (206.94,30) .. (206.25,30) .. controls (205.56,30) and (205,29.44) .. (205,28.75) -- cycle ;
%Shape: Circle [id:dp05392333372702274] 
\draw  [fill={rgb, 255:red, 0; green, 0; blue, 0 }  ,fill opacity=1 ] (275,88.75) .. controls (275,88.06) and (275.56,87.5) .. (276.25,87.5) .. controls (276.94,87.5) and (277.5,88.06) .. (277.5,88.75) .. controls (277.5,89.44) and (276.94,90) .. (276.25,90) .. controls (275.56,90) and (275,89.44) .. (275,88.75) -- cycle ;
%Curve Lines [id:da8258419554621267] 
\draw    (280,15) .. controls (280.2,25) and (310.2,25) .. (310,15) ;

% Text Node
\draw (236,42.4) node [anchor=north west][inner sep=0.75pt]    {$=$};

\end{tikzpicture}
        \caption{}
        \label{fig:TQFTd_rel2}
    \end{subfigure}
    \hfill
    \begin{subfigure}{.32\linewidth}
        \centering
        \tikzset{every picture/.style={line width=0.75pt}} %set default line width to 0.75pt        

\begin{tikzpicture}[x=0.75pt,y=0.75pt,yscale=-1,xscale=1]
%uncomment if require: \path (0,119); %set diagram left start at 0, and has height of 119

%Curve Lines [id:da9041941426627691] 
\draw    (335,40) .. controls (335.2,50) and (365.2,50) .. (365,40) ;
%Curve Lines [id:da4646190029557782] 
\draw    (375,40) .. controls (375.2,50) and (405.2,50) .. (405,40) ;
%Curve Lines [id:da990810802289269] 
\draw    (375,100) .. controls (375.2,110) and (405.2,110) .. (405,100) ;
%Curve Lines [id:da4914015487752009] 
\draw  [dash pattern={on 0.84pt off 2.51pt}]  (375,100) .. controls (375.2,90) and (405.2,90) .. (405,100) ;
%Curve Lines [id:da3617149948815196] 
\draw    (335,100) .. controls (335.2,110) and (365.2,110) .. (365,100) ;
%Curve Lines [id:da007762814156650699] 
\draw  [dash pattern={on 0.84pt off 2.51pt}]  (335,100) .. controls (335.2,90) and (365.2,90) .. (365,100) ;
%Curve Lines [id:da8353008247108591] 
\draw    (365,40) .. controls (365.2,65.2) and (405.2,74.2) .. (405,100) ;
%Curve Lines [id:da7585099631220561] 
\draw    (335,40) .. controls (335.2,65.2) and (375.2,74.2) .. (375,100) ;
%Curve Lines [id:da8474747906728015] 
\draw    (405,40) .. controls (405.2,65.2) and (365.2,74.2) .. (365,100) ;
%Curve Lines [id:da6404952673162596] 
\draw    (375,40) .. controls (375.2,65.2) and (335.2,74.2) .. (335,100) ;
%Curve Lines [id:da5939682170068697] 
\draw  [dash pattern={on 0.84pt off 2.51pt}]  (335,40) .. controls (335.2,30) and (365.2,30) .. (365,40) ;
%Straight Lines [id:da6336559984267796] 
\draw    (365,15) -- (365,40) ;
%Curve Lines [id:da48578658997863855] 
\draw    (335,15) .. controls (335.2,5) and (365.2,5) .. (365,15) ;
%Curve Lines [id:da14407002671657565] 
\draw    (335,15) .. controls (335.2,25) and (365.2,25) .. (365,15) ;
%Straight Lines [id:da14885157283840877] 
\draw    (335,15) -- (335,40) ;
%Curve Lines [id:da5072361841464319] 
\draw  [dash pattern={on 0.84pt off 2.51pt}]  (375,40) .. controls (375.2,30) and (405.2,30) .. (405,40) ;
%Straight Lines [id:da14605850104519813] 
\draw    (405,15) -- (405,40) ;
%Curve Lines [id:da4444223444367593] 
\draw    (375,15) .. controls (375.2,5) and (405.2,5) .. (405,15) ;
%Curve Lines [id:da7491962813851899] 
\draw    (375,15) .. controls (375.2,25) and (405.2,25) .. (405,15) ;
%Straight Lines [id:da7519288880219005] 
\draw    (375,15) -- (375,40) ;
%Curve Lines [id:da5287722207200898] 
\draw    (425,75) .. controls (425.2,85) and (455.2,85) .. (455,75) ;
%Curve Lines [id:da5263076730185409] 
\draw  [dash pattern={on 0.84pt off 2.51pt}]  (425,75) .. controls (425.2,65) and (455.2,65) .. (455,75) ;
%Curve Lines [id:da8928677912108423] 
\draw    (465,75) .. controls (465.2,85) and (495.2,85) .. (495,75) ;
%Curve Lines [id:da3960058723141018] 
\draw  [dash pattern={on 0.84pt off 2.51pt}]  (465,75) .. controls (465.2,65) and (495.2,65) .. (495,75) ;
%Curve Lines [id:da8120924598030521] 
\draw    (455,15) .. controls (455.2,40.2) and (495.2,49.2) .. (495,75) ;
%Curve Lines [id:da9095006297879095] 
\draw    (425,15) .. controls (425.2,40.2) and (465.2,49.2) .. (465,75) ;
%Curve Lines [id:da3120646964874354] 
\draw    (465,15) .. controls (465.2,40.2) and (425.2,49.2) .. (425,75) ;
%Curve Lines [id:da4964924865271486] 
\draw    (495,15) .. controls (495.2,40.2) and (455.2,49.2) .. (455,75) ;
%Curve Lines [id:da4803799735444697] 
\draw    (425,15) .. controls (425.2,5) and (455.2,5) .. (455,15) ;
%Curve Lines [id:da6584753605745459] 
\draw    (465,15) .. controls (465.2,5) and (495.2,5) .. (495,15) ;
%Curve Lines [id:da42630264129863116] 
\draw  [dash pattern={on 0.84pt off 2.51pt}]  (425,100) .. controls (425.2,90) and (455.2,90) .. (455,100) ;
%Straight Lines [id:da2501489546745995] 
\draw    (455,75) -- (455,100) ;
%Straight Lines [id:da2803590795636107] 
\draw    (425,75) -- (425,100) ;
%Curve Lines [id:da08168567118450232] 
\draw  [dash pattern={on 0.84pt off 2.51pt}]  (465,100) .. controls (465.2,90) and (495.2,90) .. (495,100) ;
%Straight Lines [id:da30953551645751065] 
\draw    (495,75) -- (495,100) ;
%Straight Lines [id:da47002126440861236] 
\draw    (465,75) -- (465,100) ;
%Curve Lines [id:da422621054280337] 
\draw    (425,100) .. controls (425.2,110) and (455.2,110) .. (455,100) ;
%Curve Lines [id:da2447679242079811] 
\draw    (465,100) .. controls (465.2,110) and (495.2,110) .. (495,100) ;
%Shape: Circle [id:dp45047119257693535] 
\draw  [fill={rgb, 255:red, 0; green, 0; blue, 0 }  ,fill opacity=1 ] (350,28.75) .. controls (350,28.06) and (350.56,27.5) .. (351.25,27.5) .. controls (351.94,27.5) and (352.5,28.06) .. (352.5,28.75) .. controls (352.5,29.44) and (351.94,30) .. (351.25,30) .. controls (350.56,30) and (350,29.44) .. (350,28.75) -- cycle ;
%Shape: Circle [id:dp6524191077305939] 
\draw  [fill={rgb, 255:red, 0; green, 0; blue, 0 }  ,fill opacity=1 ] (480,88.75) .. controls (480,88.06) and (480.56,87.5) .. (481.25,87.5) .. controls (481.94,87.5) and (482.5,88.06) .. (482.5,88.75) .. controls (482.5,89.44) and (481.94,90) .. (481.25,90) .. controls (480.56,90) and (480,89.44) .. (480,88.75) -- cycle ;
%Curve Lines [id:da329759895913719] 
\draw    (425,15) .. controls (425.2,25) and (455.2,25) .. (455,15) ;
%Curve Lines [id:da7405945430368096] 
\draw    (465,15) .. controls (465.2,25) and (495.2,25) .. (495,15) ;

% Text Node
\draw (407,43.4) node [anchor=north west][inner sep=0.75pt]    {$=$};

\end{tikzpicture}
        \caption{}
        \label{fig:TQFTd_rel3}
    \end{subfigure}
    \caption{Additional relations for $\Cobd$.}
    \label{fig:TQFTd_rel}
\end{figure}

Consequently, we obtain a functor from $\Cobd$ to $(\FF[H]-\mathrm{Mod})$.
One can verify that this functor satisfies the four relations provided in \Cref{def:Cobdl}.
It therefore descends to a functor on $\Cobdl$, and subsequently to the categories of complexes $\Kobd$ and $\Kobdh$.
By applying this TQFT to $\bn{K}$, we obtain a chain complex $\CBN(K)$ of $\FF[H]$-modules whose chain homotopy type is an invariant of the link $K$.
The homology of this complex is the \emph{Bar-Natan homology}, denoted by $\BN(K)$.

\subsection{Involution on the formal Khovanov complex}

For a given involutive link $(K,\tau)$, fix a transvergent diagram $D_K$.
Each crossing $i \in \{1, \dots, n\}$ of $D_K$ has a corresponding crossing $\tau(i) \in \{1, \dots, n\}$ under the involution $\tau$.
Furthermore, for each resolution $v \in V = \{0,1\}^{\{1,\dots,n\}}$ and its associated diagram $D_K(v)$, there is a corresponding diagram $\tau D_K(v) = D_K(v\circ\tau)$.
We define a morphism $I_\tau$ consisting of cylinders, where each cylinder connects a circle of $D_K(v)$ to the corresponding circle of $\tau D_K(v)$ under $\tau$.
This $I_\tau$ is indeed a chain map, and $I_\tau^2$ is the identity map on $\bn{K}$.

Now we define the formal involutive Bar-Natan complex.
\begin{defn}
    The \emph{formal involutive Bar-Natan complex} $\eqbn{D}$ for a transvergent diagram $D$ is defined by
    \[ \eqbn{D} \coloneqq \Cone \left(\bn{D} \xrightarrow{1+I_\tau} \bn{D} \right).\]
    More precisely, the underlying object is $\bn{D} \oplus \bn{D}$ and the differential is given by $\begin{bmatrix} d & 0 \\ 1+I_\tau & d \end{bmatrix}$.
\end{defn}

In \cite{Lobb-Watson:2021}, the authors defined \emph{involutive Reidemeister moves} and proved that two transvergent diagrams represent equivalent strongly invertible links if and only if they are related by a sequence of involutive Reidemeister moves.
The following theorem guarantees that the formal involutive Bar-Natan complex is an invariant of an invertible link.

\begin{thm}[\cite{Sano:2025}]
    If two transvergent diagrams $D_1$ and $D_2$ are related by an involutive Reidemeister move, then the complexes $\eqbn{D_1}$ and $\eqbn{D_2}$ are chain homotopy equivalent.
    In particular, the chain homotopy type of $\eqbn{D_K}$ is an invariant of the strongly invertible link $K$.
\end{thm}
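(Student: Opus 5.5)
The plan is to reduce the statement to a local, move-by-move check, and then use a general fact about mapping cones. By the result of \cite{Lobb-Watson:2021}, it suffices to treat a single involutive Reidemeister move relating $D_1$ and $D_2$. Write $I_1, I_2$ for the involutions $I_\tau$ on $\bn{D_1}, \bn{D_2}$.

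The algebraic core is a cone lemma in $\Kobd$. Suppose $f\colon \bn{D_1}\to\bn{D_2}$ is a chain homotopy equivalence and $h$ is a homotopy with
\[ f I_1 + I_2 f = dh + hd. \]
Then
\[ F=\begin{bmatrix} f & 0\\ h & f\end{bmatrix} \]
is a chain map $\eqbn{D_1}\to\eqbn{D_2}$; signs are irrelevant over $\FF$. $F$ respects the two-step filtration of the cones, and on associated graded pieces it is $f$. I would show that $\Cone(F)$ is contractible. It is an extension of $\Cone(f)$ by $\Cone(f)$, and each copy is contractible because $f$ is a homotopy equivalence. A contraction of each piece, corrected by the standard perturbation (the filtration has length two, so the perturbation series terminates), gives a contraction of the total complex. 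Hence $F$ is a homotopy equivalence. So the whole proof reduces to the following claim: for each involutive Reidemeister move, the standard Bar-Natan equivalence $f$ (or a suitably chosen one) commutes with the involutions up to homotopy.

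Next I would split the involutive moves into two classes. The first class consists of symmetric pairs of ordinary Reidemeister moves $m, \tau m$ performed in disjoint regions away from the axis. Here I take $f = f_{\tau m}\circ f_m$, where $f_m$ is Bar-Natan's local equivalence from \cite{Bar-Natan:2005}, tensored with identities via planar composition. Two facts combine here. Local maps with disjoint supports commute. Conjugating a local map by the cylinders $I_\tau$ transports it to the $\tau$-image region, so $I_2 f_m I_1' = f_{\tau m}$ for the intermediate diagram's involution. Together these should give $f I_1 = I_2 f$ on the nose. I would verify this directly on the cube of resolutions, since $I_\tau$ only relabels circles.

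The second class consists of moves centered on the axis: R1 through a fixed point, R2 across the axis, and the R3-type and mixed moves in the Lobb–Watson list. Here the local tangle is itself $\tau$-symmetric, and I expect this to be the main obstacle. Bar-Natan's equivalences come from delooping and Gaussian elimination, which choose a non-symmetric splitting, for example putting the dot on one particular side. Consequently $fI_1$ and $I_2 f$ typically differ by the effect of moving a dot across a saddle or swapping two sheets.

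My first attempt for these moves is to write the local equivalence explicitly on the small local complex (two resolutions for R1, four for R2, and so on). I would compute $fI_1 + I_2 f$, which should be a sum of dotted cobordisms. I would then exhibit $h$ explicitly, using (NC), 4-Tu and $H$-trading to express this difference as $dh+hd$, with $h$ built from the inverse saddles that already appear in Bar-Natan's homotopies. Where possible, I would instead first symmetrize $f$, for instance by averaging dot placements using $H$-trading. Averaging is unavailable over $\FF$, so in general the explicit homotopy will be needed. The R2 and R3 cases on the axis, where $\tau$ swaps crossings inside the local picture, are the ones requiring the most careful bookkeeping.
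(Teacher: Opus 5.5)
The paper does not prove this statement; it quotes it from \cite{Sano:2025}, so I am judging your argument on its own terms. Your reduction is sound.

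Your cone lemma is correct. After reordering summands, $\Cone(F)$ is the cone of the chain map $\phi=\begin{bmatrix}1+I_1&0\\ h&1+I_2\end{bmatrix}\colon\Cone(f)\to\Cone(f)$. If $H$ contracts $\Cone(f)$, then $\begin{bmatrix}H&0\\ H\phi H&H\end{bmatrix}$ contracts $\Cone(\phi)$. This is a real saving over the coherent formalism of \Cref{prop:eq_chain}: for invariance you need neither an equivariant homotopy inverse nor the second-order homotopy $k$. The paper needs coherence for its crossing-change maps only because there it must identify $\widetilde{g}\widetilde{f}$ with $H^t$, not merely show it is invertible.

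The off-axis case is also fine, but your phrasing needs fixing. The intermediate diagram is not symmetric and carries no involution. Instead, \emph{define} the local map for $\tau m$ as the $\tau$-transport of the map for $m$. Then conjugating $f_{\tau m}f_m$ by the $I$'s gives the same two local maps composed in the other order, and this agrees with the original because their supports are disjoint.

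The gap is the on-axis case, which is where the theorem has its content. You announce a computation but do not perform it, and you locate the difficulty in the wrong place.

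In IR1 and IR2, $\tau$ fixes every local crossing. Every local component on which Bar-Natan's maps put a cap, a cup or a dot is mapped to itself: the small circle and through-arc in IR1, and the cap, cup and middle circle in IR2. The saddles are symmetric too. So in the abstract-cobordism model these maps already commute with $I_\tau$ on the nose.

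The real problem is IR3. Since $\tau$ exchanges the over- and under-strand at an on-axis crossing, the $\tau$-invariant strand through the fixed point must be the \emph{middle} strand. The categorified Kauffman trick therefore has to resolve one of the two off-axis crossings $p$ or $\tau p$. Then $I_2fI_1$ is the R3 equivalence built by resolving the other one. What must be proved is that these two different equivalences are homotopic, and nothing in your proposal does this. Hoping that $h$ is assembled from inverse saddles is not an argument.

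You can close the gap in one of two ways. The first is to do the IR3 (and mixed-move) computation explicitly. The second is rigidity:
\begin{itemize}
\item Localize to a $\tau$-invariant disk, so that $\bn{D_i}=\bn{T_i}\otimes\bn{E}$ and $I_i$ is the tensor product of the $\tau$-transports.
\item Then $\tau_*f_{\mathrm{loc}}$ is another degree-$(0,0)$ homotopy equivalence $\bn{T_1}\to\bn{T_2}$.
\item By Bar-Natan's Kh-simplicity of tangles without closed components, degree-$(0,0)$ self-equivalences are $\pm\id$ up to homotopy, hence $\id$ over $\FF$. So $\tau_*f_{\mathrm{loc}}\simeq f_{\mathrm{loc}}$ via some $h_{\mathrm{loc}}$.
\item Then $h=(h_{\mathrm{loc}}\otimes\id)I_1$ satisfies $fI_1+I_2f=dh+hd$.
\end{itemize}
The rigidity route needs the quantum grading and the tangle formalism, which the paper deliberately suppresses.
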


We denote the chain homotopy type of $\eqbn{D_K}$ by $\eqbn{K}$, which lies in the category $\Kobdh$.
As before, we apply the TQFT to obtain a chain complex $\eqCBN(K)$ of $\FF[H]$-modules.
Its homology is called the \emph{involutive Bar-Natan homology}, denoted by $\eqBN(K)$.

Before moving to the next section, we introduce some algebraic notions.

\begin{defn}
    Let $C$ and $C'$ be chain complexes over an additive category with coefficients in $\FF$.
    We write $f \simeq_h g$ if the chain maps $f$ and $g$ are homotopic via a homotopy $h$.
    \begin{itemize}
        \item A pair $\bC = (C,\tau)$ is an \emph{equivariant chain complex} if the chain map $\tau \colon C \to C$ is an involution.
        For an equivariant chain complex $\bC$, we define $\eqC = \Cone(C \xrightarrow{1+\tau} C)$.
        \item A chain map $f \colon \bC \to \bC'$ between equivariant chain complexes $\bC = (C,\tau)$, $\bC' = (C',\tau')$ is called a \emph{homotopy equivariant chain map} if there exists a homotopy $h_f$ such that $f \tau \simeq_{h_f} \tau' f$.
        \item Two homotopy equivariant chain maps $f, g$ (with homotopies $h_f, h_g$) are \emph{coherently homotopic with homotopy $h$} if $f \simeq_h g$ and there exists a homotopy $k$ such that $h \tau + \tau' h \simeq_k h_f + h_g$.
    \end{itemize}
\end{defn}

The following results from \cite{Sano:2025} are straightforward to verify.

\begin{prop}\label{prop:eq_chain}
    \begin{itemize}
        \item The composition $gf$ of two homotopy equivariant chain maps $f \colon \bC \to \bC'$ and $g \colon \bC' \to \bC''$ is also a homotopy equivariant chain map with $h_{gf} \coloneqq gh_f + h_g f$.
        \item A homotopy equivariant chain map $f \colon \bC \to \bC'$ induces a chain map $\eqf \colon \eqC \to \eqC'$ given by $\eqf = \begin{bmatrix} f & 0 \\ h_f & f \end{bmatrix}$.
        The identity map $\id_{\bC} \colon \bC \to \bC$ induces $\widetilde{\id_{\bC}} = \id_{\eqC}$. Furthermore, for any composable homotopy equivariant chain maps $f$ and $g$, we have $\widetilde{gf} = \widetilde{g}\widetilde{f}$.
        \item For two coherently homotopic homotopy equivariant chain maps $f, g \colon \bC \to \bC'$ with homotopy $h$ (with a homotopy $k$ such that $h\tau+\tau'h\simeq_k h_f+h_g$), the induced maps $\eqf, \eqg$ are chain homotopic via the homotopy $\eqh = \begin{bmatrix} h & 0 \\ k & h \end{bmatrix}$.
    \end{itemize}
\end{prop}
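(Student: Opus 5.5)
We verify the three items of \Cref{prop:eq_chain} in order, by direct matrix computation over $\FF=\ZZ/2\ZZ$, so that all signs disappear. Throughout, $\eqC$ has differential $\tilde d=\begin{bmatrix} d & 0\\ 1+\tau & d\end{bmatrix}$, and a homotopy $f\tau\simeq_{h_f}\tau'f$ means
\[ f\tau+\tau'f = dh_f+h_fd. \]

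\textbf{Composition.} For the first item we compute $gf\tau+\tau''gf$. We insert the middle terms $g\tau'f+g\tau'f$, which sum to zero in characteristic $2$. This gives
\[ g(f\tau+\tau'f)+(g\tau'+\tau''g)f = g(dh_f+h_fd)+(dh_g+h_gd)f. \]
Because $f$ and $g$ are chain maps, $gd=dg$ and $df=fd$. Hence the right-hand side equals $d(gh_f+h_gf)+(gh_f+h_gf)d$, as required.

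\textbf{The induced map.} For the second item, we check that $\eqf$ is a chain map by multiplying the block matrices:
\[ \tilde d'\eqf=\begin{bmatrix} df & 0\\ (1+\tau')f+dh_f & df\end{bmatrix},\qquad \eqf\tilde d=\begin{bmatrix} fd & 0\\ h_fd+f(1+\tau) & fd\end{bmatrix}. \]
The diagonal entries agree because $f$ is a chain map. The lower-left entries agree exactly when $f\tau+\tau'f=dh_f+h_fd$, which is the defining homotopy.
\begin{itemize}
\item For the identity we take the homotopy $h_{\id}=0$, since $\tau=\tau$ on the nose; then $\widetilde{\id_{\bC}}=\id_{\eqC}$.
\item For functoriality we compute $\eqg\eqf=\begin{bmatrix} gf & 0\\ h_gf+gh_f & gf\end{bmatrix}$. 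This is $\widetilde{gf}$ with the homotopy $h_{gf}$ chosen in the first item.
\end{itemize}

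\textbf{Coherent homotopies.} For the third item we set $\eqh=\begin{bmatrix} h&0\\ k&h\end{bmatrix}$ and compute
\[ \tilde d'\eqh+\eqh\tilde d=\begin{bmatrix} dh+hd & 0\\ (1+\tau')h+dk+kd+h(1+\tau) & dh+hd\end{bmatrix}. \]
The diagonal entries are $f+g$ by $f\simeq_h g$. In the lower-left entry, the two bare copies of $h$ cancel in characteristic $2$, leaving $\tau'h+h\tau+dk+kd$. By the definition of $k$, this equals $h_f+h_g$. So the total is $\eqf+\eqg$, as needed.

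\textbf{Main obstacle.} The only real subtlety is bookkeeping: keeping consistent the convention that $h_f$ satisfies $f\tau+\tau'f=dh_f+h_fd$. Over $\FF_2$ nothing else can go wrong. Over a general ring one would need to track signs, for instance $f\tau-\tau'f$ and the mapping cone sign conventions, and the check would be longer.
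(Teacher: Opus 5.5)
Your proposal is correct and takes the same approach as the paper: the paper gives no written proof, only remarks (citing Sano) that these facts are straightforward to verify, and your three block-matrix checks over $\FF$ are exactly that verification. Your computations of $\tilde d'\eqf$, $\eqf\tilde d$, $\eqg\eqf$ and $\tilde d'\eqh+\eqh\tilde d$ are all right, including the implicit convention $h_{\id}=0$ needed for $\widetilde{\id_{\bC}}=\id_{\eqC}$.
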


\subsection{Equivariant unknotting}\label{subsec:equ}

In \cite{Boyle-Chen:2026}, the authors studied the equivariant unknotting number by considering the following three types of equivariant crossing changes.

\begin{defn}[\cite{Boyle-Chen:2026}]
    We define a \emph{Type X equivariant crossing change} for $X \in \{A, B, C\}$ as follows: 
    \begin{itemize}
        \item \textbf{Type A}: Two crossing changes occur simultaneously off the axis;
        \item \textbf{Type B}: Two strands intersect on the axis at the moment of the crossing change; and
        \item \textbf{Type C}: Two fixed points move along the axis and collide at the moment of the crossing change.
    \end{itemize}
    See \Cref{fig:ecc} for a diagrammatic explanation.
\end{defn}

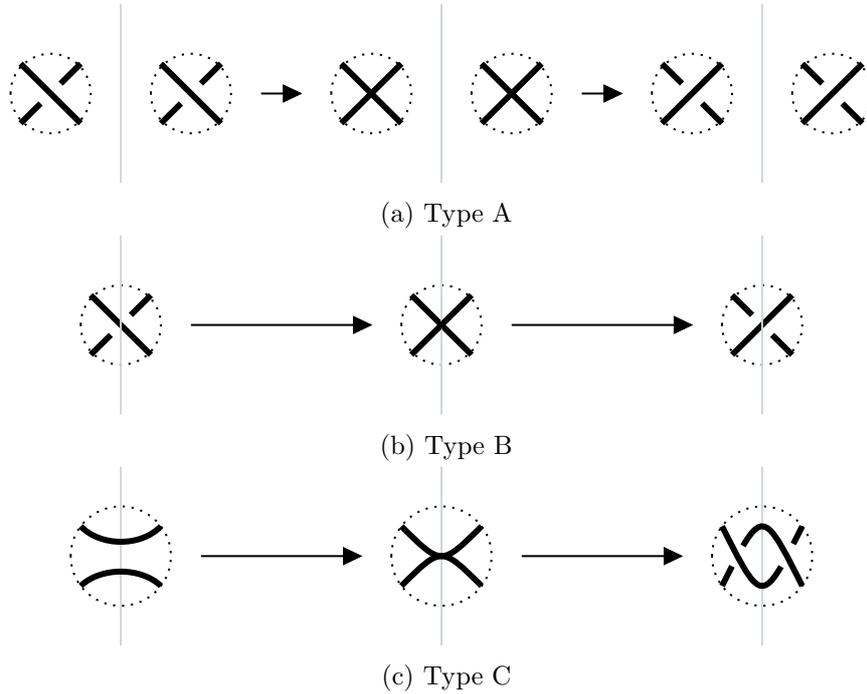
\begin{figure}
    \centering
    \begin{subfigure}{\linewidth}
        \centering
        \tikzset{every picture/.style={line width=0.75pt}} %set default line width to 0.75pt        

\begin{tikzpicture}[x=0.75pt,y=0.75pt,yscale=-1,xscale=1]
%uncomment if require: \path (0,91); %set diagram left start at 0, and has height of 91

%Shape: Ellipse [id:dp7129725853015582] 
\draw  [dash pattern={on 0.84pt off 2.51pt}] (15,45) .. controls (15,33.95) and (23.95,25) .. (35,25) .. controls (46.05,25) and (55,33.95) .. (55,45) .. controls (55,56.05) and (46.05,65) .. (35,65) .. controls (23.95,65) and (15,56.05) .. (15,45) -- cycle ;
%Straight Lines [id:da32525937368977365] 
\draw [line width=2.25]    (20,30) -- (50,60) ;
%Straight Lines [id:da5214176338829674] 
\draw [line width=2.25]    (50,30) -- (40,40) ;
%Straight Lines [id:da7655324719191758] 
\draw [line width=2.25]    (30,50) -- (20,60) ;
%Straight Lines [id:da8157449109350116] 
\draw [line width=2.25]    (90,30) -- (120,60) ;
%Straight Lines [id:da5577492902540905] 
\draw [line width=2.25]    (120,30) -- (110,40) ;
%Straight Lines [id:da07461193810362554] 
\draw [line width=2.25]    (100,50) -- (90,60) ;
%Straight Lines [id:da5638706749745757] 
\draw [color={rgb, 255:red, 155; green, 155; blue, 155 }  ,draw opacity=1 ]   (70,0) -- (70,90) ;
%Shape: Ellipse [id:dp4997282523222496] 
\draw  [dash pattern={on 0.84pt off 2.51pt}] (85,45) .. controls (85,33.95) and (93.95,25) .. (105,25) .. controls (116.05,25) and (125,33.95) .. (125,45) .. controls (125,56.05) and (116.05,65) .. (105,65) .. controls (93.95,65) and (85,56.05) .. (85,45) -- cycle ;
%Straight Lines [id:da7701654329367813] 
\draw    (140,45) -- (157,45) ;
\draw [shift={(160,45)}, rotate = 180] [fill={rgb, 255:red, 0; green, 0; blue, 0 }  ][line width=0.08]  [draw opacity=0] (8.93,-4.29) -- (0,0) -- (8.93,4.29) -- cycle    ;
%Straight Lines [id:da14796024210841086] 
\draw [line width=2.25]    (180,30) -- (210,60) ;
%Shape: Ellipse [id:dp4800348050963503] 
\draw  [dash pattern={on 0.84pt off 2.51pt}] (175,45) .. controls (175,33.95) and (183.95,25) .. (195,25) .. controls (206.05,25) and (215,33.95) .. (215,45) .. controls (215,56.05) and (206.05,65) .. (195,65) .. controls (183.95,65) and (175,56.05) .. (175,45) -- cycle ;
%Straight Lines [id:da2848396192305246] 
\draw [color={rgb, 255:red, 155; green, 155; blue, 155 }  ,draw opacity=1 ]   (230,0) -- (230,90) ;
%Straight Lines [id:da5026070303845216] 
\draw [line width=2.25]    (210,30) -- (180,60) ;
%Straight Lines [id:da6434420655886517] 
\draw [line width=2.25]    (250,30) -- (280,60) ;
%Straight Lines [id:da053946273654312926] 
\draw [line width=2.25]    (280,30) -- (250,60) ;
%Shape: Ellipse [id:dp8045242693851076] 
\draw  [dash pattern={on 0.84pt off 2.51pt}] (245,45) .. controls (245,33.95) and (253.95,25) .. (265,25) .. controls (276.05,25) and (285,33.95) .. (285,45) .. controls (285,56.05) and (276.05,65) .. (265,65) .. controls (253.95,65) and (245,56.05) .. (245,45) -- cycle ;
%Straight Lines [id:da5261504690560529] 
\draw [line width=2.25]    (370,30) -- (340,60) ;
%Straight Lines [id:da504948697702246] 
\draw    (300,45) -- (317,45) ;
\draw [shift={(320,45)}, rotate = 180] [fill={rgb, 255:red, 0; green, 0; blue, 0 }  ][line width=0.08]  [draw opacity=0] (8.93,-4.29) -- (0,0) -- (8.93,4.29) -- cycle    ;
%Straight Lines [id:da5053310320443148] 
\draw [line width=2.25]    (340,30) -- (350,40) ;
%Shape: Ellipse [id:dp9866697790075494] 
\draw  [dash pattern={on 0.84pt off 2.51pt}] (335,45) .. controls (335,33.95) and (343.95,25) .. (355,25) .. controls (366.05,25) and (375,33.95) .. (375,45) .. controls (375,56.05) and (366.05,65) .. (355,65) .. controls (343.95,65) and (335,56.05) .. (335,45) -- cycle ;
%Straight Lines [id:da23954218512330194] 
\draw [color={rgb, 255:red, 155; green, 155; blue, 155 }  ,draw opacity=1 ]   (390,0) -- (390,90) ;
%Shape: Ellipse [id:dp1390445890390265] 
\draw  [dash pattern={on 0.84pt off 2.51pt}] (405,45) .. controls (405,33.95) and (413.95,25) .. (425,25) .. controls (436.05,25) and (445,33.95) .. (445,45) .. controls (445,56.05) and (436.05,65) .. (425,65) .. controls (413.95,65) and (405,56.05) .. (405,45) -- cycle ;
%Straight Lines [id:da5295022494117545] 
\draw [line width=2.25]    (360,50) -- (370,60) ;
%Straight Lines [id:da9723206292294044] 
\draw [line width=2.25]    (440,30) -- (410,60) ;
%Straight Lines [id:da6962609039770016] 
\draw [line width=2.25]    (410,30) -- (420,40) ;
%Straight Lines [id:da8229255601358788] 
\draw [line width=2.25]    (430,50) -- (440,60) ;

\end{tikzpicture}
        \caption{Type A}
        \label{fig:Aecc}
    \end{subfigure}
    \hfill
    \begin{subfigure}{\linewidth}
        \centering
        \tikzset{every picture/.style={line width=0.75pt}} %set default line width to 0.75pt        

\begin{tikzpicture}[x=0.75pt,y=0.75pt,yscale=-1,xscale=1]
%uncomment if require: \path (0,91); %set diagram left start at 0, and has height of 91

%Shape: Ellipse [id:dp8134494502770792] 
\draw  [dash pattern={on 0.84pt off 2.51pt}] (50,45) .. controls (50,33.95) and (58.95,25) .. (70,25) .. controls (81.05,25) and (90,33.95) .. (90,45) .. controls (90,56.05) and (81.05,65) .. (70,65) .. controls (58.95,65) and (50,56.05) .. (50,45) -- cycle ;
%Straight Lines [id:da17457294473432028] 
\draw [line width=2.25]    (55,30) -- (85,60) ;
%Straight Lines [id:da09450875443723306] 
\draw [line width=2.25]    (85,30) -- (75,40) ;
%Straight Lines [id:da08218313825005763] 
\draw [line width=2.25]    (65,50) -- (55,60) ;
%Straight Lines [id:da520640915488156] 
\draw [color={rgb, 255:red, 155; green, 155; blue, 155 }  ,draw opacity=1 ]   (70,0) -- (70,90) ;
%Straight Lines [id:da2777970370562418] 
\draw    (105,45) -- (192,45) ;
\draw [shift={(195,45)}, rotate = 180] [fill={rgb, 255:red, 0; green, 0; blue, 0 }  ][line width=0.08]  [draw opacity=0] (8.93,-4.29) -- (0,0) -- (8.93,4.29) -- cycle    ;
%Straight Lines [id:da5055573539255788] 
\draw [line width=2.25]    (215,30) -- (245,60) ;
%Shape: Ellipse [id:dp42893298517343414] 
\draw  [dash pattern={on 0.84pt off 2.51pt}] (210,45) .. controls (210,33.95) and (218.95,25) .. (230,25) .. controls (241.05,25) and (250,33.95) .. (250,45) .. controls (250,56.05) and (241.05,65) .. (230,65) .. controls (218.95,65) and (210,56.05) .. (210,45) -- cycle ;
%Straight Lines [id:da18573615103133856] 
\draw [color={rgb, 255:red, 155; green, 155; blue, 155 }  ,draw opacity=1 ]   (230,0) -- (230,90) ;
%Straight Lines [id:da6704711811056836] 
\draw [line width=2.25]    (245,30) -- (215,60) ;
%Straight Lines [id:da08019408623490909] 
\draw [line width=2.25]    (405,30) -- (375,60) ;
%Straight Lines [id:da3836087906660792] 
\draw    (265,45) -- (352,45) ;
\draw [shift={(355,45)}, rotate = 180] [fill={rgb, 255:red, 0; green, 0; blue, 0 }  ][line width=0.08]  [draw opacity=0] (8.93,-4.29) -- (0,0) -- (8.93,4.29) -- cycle    ;
%Straight Lines [id:da7596274264023757] 
\draw [line width=2.25]    (375,30) -- (385,40) ;
%Shape: Ellipse [id:dp9823480077205194] 
\draw  [dash pattern={on 0.84pt off 2.51pt}] (370,45) .. controls (370,33.95) and (378.95,25) .. (390,25) .. controls (401.05,25) and (410,33.95) .. (410,45) .. controls (410,56.05) and (401.05,65) .. (390,65) .. controls (378.95,65) and (370,56.05) .. (370,45) -- cycle ;
%Straight Lines [id:da7914373816864106] 
\draw [color={rgb, 255:red, 155; green, 155; blue, 155 }  ,draw opacity=1 ]   (390,0) -- (390,90) ;
%Straight Lines [id:da08487785477542986] 
\draw [line width=2.25]    (395,50) -- (405,60) ;

\end{tikzpicture}
        \caption{Type B}
        \label{fig:Becc}
    \end{subfigure}
    \hfill
    \begin{subfigure}{\linewidth}
        \centering
        \tikzset{every picture/.style={line width=0.75pt}} %set default line width to 0.75pt        

\begin{tikzpicture}[x=0.75pt,y=0.75pt,yscale=-1,xscale=1]
%uncomment if require: \path (0,91); %set diagram left start at 0, and has height of 91

%Shape: Ellipse [id:dp7978326166525193] 
\draw  [dash pattern={on 0.84pt off 2.51pt}] (45,45) .. controls (45,31.19) and (56.19,20) .. (70,20) .. controls (83.81,20) and (95,31.19) .. (95,45) .. controls (95,58.81) and (83.81,70) .. (70,70) .. controls (56.19,70) and (45,58.81) .. (45,45) -- cycle ;
%Straight Lines [id:da9191563895491562] 
\draw [color={rgb, 255:red, 155; green, 155; blue, 155 }  ,draw opacity=1 ]   (70,0) -- (70,90) ;
%Straight Lines [id:da40878929899292227] 
\draw    (110,45) -- (187,45) ;
\draw [shift={(190,45)}, rotate = 180] [fill={rgb, 255:red, 0; green, 0; blue, 0 }  ][line width=0.08]  [draw opacity=0] (8.93,-4.29) -- (0,0) -- (8.93,4.29) -- cycle    ;
%Straight Lines [id:da06686098573731725] 
\draw [color={rgb, 255:red, 155; green, 155; blue, 155 }  ,draw opacity=1 ]   (230,0) -- (230,90) ;
%Straight Lines [id:da46064454285764966] 
\draw    (270,45) -- (347,45) ;
\draw [shift={(350,45)}, rotate = 180] [fill={rgb, 255:red, 0; green, 0; blue, 0 }  ][line width=0.08]  [draw opacity=0] (8.93,-4.29) -- (0,0) -- (8.93,4.29) -- cycle    ;
%Straight Lines [id:da07364474930761011] 
\draw [color={rgb, 255:red, 155; green, 155; blue, 155 }  ,draw opacity=1 ]   (390,0) -- (390,90) ;
%Curve Lines [id:da0619029263664016] 
\draw [line width=2.25]    (50,30) .. controls (60,40.4) and (80,40.4) .. (90,30) ;
%Curve Lines [id:da6981346949130683] 
\draw [line width=2.25]    (50,60) .. controls (60,50.4) and (80,50.4) .. (90,60) ;
%Shape: Ellipse [id:dp9077536106038533] 
\draw  [dash pattern={on 0.84pt off 2.51pt}] (205,45) .. controls (205,31.19) and (216.19,20) .. (230,20) .. controls (243.81,20) and (255,31.19) .. (255,45) .. controls (255,58.81) and (243.81,70) .. (230,70) .. controls (216.19,70) and (205,58.81) .. (205,45) -- cycle ;
%Curve Lines [id:da543272593282894] 
\draw [line width=2.25]    (210,30) .. controls (230,50) and (230,50) .. (250,30) ;
%Curve Lines [id:da7458913165681471] 
\draw [line width=2.25]    (210,60) .. controls (230,40) and (230,40) .. (250,60) ;
%Shape: Ellipse [id:dp9327547135065456] 
\draw  [dash pattern={on 0.84pt off 2.51pt}] (365,45) .. controls (365,31.19) and (376.19,20) .. (390,20) .. controls (403.81,20) and (415,31.19) .. (415,45) .. controls (415,58.81) and (403.81,70) .. (390,70) .. controls (376.19,70) and (365,58.81) .. (365,45) -- cycle ;
%Curve Lines [id:da262561145395221] 
\draw [line width=2.25]    (370,30) .. controls (385.69,61.38) and (389.07,68.14) .. (399.46,50.28) ;
%Curve Lines [id:da8349738680967133] 
\draw [line width=2.25]    (380.46,39.86) .. controls (390.91,21.82) and (394.27,28.53) .. (410,60) ;
%Curve Lines [id:da3608627434437217] 
\draw [line width=2.25]    (370,60) .. controls (371.81,56.37) and (373.46,53.07) .. (374.98,50.1) ;
%Curve Lines [id:da8765355868406559] 
\draw [line width=2.25]    (405.54,38.88) .. controls (406.91,36.18) and (408.39,33.22) .. (410,30) ;

\end{tikzpicture}
        \caption{Type C}
        \label{fig:Cecc}
    \end{subfigure}
    \caption{Three types of equivariant crossing change.}
    \label{fig:ecc}
\end{figure}

\begin{defn}
    The \emph{equivariant unknotting number} $\equ(K)$ of a strongly invertible knot $K$ is defined by
    \[ \equ(K) \coloneqq \min\left\{ 2u_A + u_B + u_C \:\middle|\: \substack{  K \text{ can be unknotted by } u_A \text{ Type A, } u_B \text{ Type B, and } \\ u_C \text{ Type C equivariant crossing changes} } \right\}. \]
\end{defn}

We briefly describe two lower bounds for the equivariant unknotting number.
Recall that the unknotting number $u(K)$ is the minimal number of self-intersections occurring during a homotopy that transforms $K$ into the unknot.
By definition, this yields the trivial lower bound $u(K) \leq \equ(K)$.

Furthermore, recall that the \emph{smooth $4$-genus} $g_4(K)$ of a knot $K$ is the minimal genus of a surface in $D^4$ that bounds a given knot $K \subset \partial D^4$.
It is a well-known fact that $g_4(K)$ provides a lower bound for $u(K)$.
This is proven by resolving the singularities of an immersed surface $\Sigma \subset S^3 \times I$, where the slices $\Sigma_t = \Sigma \cap (S^3 \times \{t\})$ represent the unknotting process from $K$ to the unknot $U$.
Analogously, we can bound the equivariant unknotting number using an equivariant version of the smooth $4$-genus.
While several definitions exist (\cite{Miller-Powell:2023,Dai-Mallick-Stoffregen:2023,Boyle-Issa:2022,Borodzik-Dai-Mallick-Stoffregen:2025}), we introduce the \emph{simple equivariant (smooth) $4$-genus} $\seq(K)$, defined as the minimal genus of a smooth cobordism $\Sigma \subset S^3 \times I$ between $(K,\tau)$ and the unknot $U$ that is invariant under the involution $\tau \times \id$ on $S^3\times I$.
By the same reasoning as in the ordinary case, the inequality $\seq(K) \leq \equ(K)$ holds.
In \cite{Sano:2025}, the author defined two equivariant Rasmussen invariants, $\us(K)$ and $\ls(K)$, and showed that both $|\us(K)|/2$ and $|\ls(K)|/2$ provide lower bounds for $\seq(K)$.
Consequently, one can obtain a lower bound for $\equ(K)$ by computing $\us(K)$ and $\ls(K)$ from $\eqBN(K)$.

    \section{Main Result}\label{sec:thm}

In this section, we prove the main result of this paper.
The following proposition is central to the proof of \Cref{thm:main}.

\begin{prop}\label{prop:eq_cc}
    Suppose a strongly invertible knot $J$ is obtained from a strongly invertible knot $K$ by a single Type $X$ equivariant crossing change, where $X \in \{A, B, C\}$.
    Then there exist chain maps $\eqCBN(K) \xrightleftharpoons[g]{f} \eqCBN(J)$ such that $g \circ f \simeq H^t \cdot \id_{\eqCBN(K)}$ and $f \circ g \simeq H^t \cdot \id_{\eqCBN(J)}$, where $t=2$ for $X=A$ and $t=1$ for $X = B$ or $X = C$.
\end{prop}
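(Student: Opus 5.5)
We follow Alishahi's argument and upgrade each step to the equivariant setting using \Cref{prop:eq_chain}.

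Recall the non-equivariant construction. Fix a diagram in which the crossing change happens at a crossing $c$. Let $D_K$ and $D_J$ differ only at $c$. Both $\bn{D_K}$ and $\bn{D_J}$ are cones built from the same two resolutions $D_0$ and $D_1$ of $c$, with the saddle pointing in opposite directions. Alishahi's maps $f\colon \bn{D_K}\to\bn{D_J}$ and $g\colon \bn{D_J}\to\bn{D_K}$ are given at the level of the cube by saddle cobordisms between $D_0$ and $D_1$ and by identity-type cylinders, possibly with dots. A neck-cutting computation, using (NC), ($H$-trading) and ($4$-Tu), shows that $gf$ equals $H\cdot\id$ up to an explicit homotopy, and similarly $fg$.

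\textbf{Type A.} The crossing change is a pair of crossing changes at $c$ and $\tau(c)$, both off the axis. We take $f=f_{\tau(c)}f_c$, i.e.\ apply Alishahi's map at $c$ and then at $\tau(c)$. This is defined formally on the two-variable cube in the crossings $c$ and $\tau(c)$, so the composite is symmetric in $c$ and $\tau(c)$. Hence $I_\tau f = f I_\tau$ holds on the nose with $h_f=0$, and likewise for $g$. Then $gf\simeq H^2\id$, with homotopy $h=H h_c+h_{\tau(c)}(\dots)$ assembled from the two single homotopies. To pass to $\eqCBN$ by \Cref{prop:eq_chain}, we must check that this homotopy is coherent: we need $hI_\tau+I_\tau h\simeq_k 0$. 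By symmetry, $I_\tau h$ is the homotopy with the roles of $c$ and $\tau(c)$ exchanged. So we need a homotopy between the two orderings, which exists because the two local homotopies commute up to a second-order homotopy. Over $\FF$, the sum of the two orderings is a commutator that should be nullhomotopic; we can realize $k$ as a product of the two local homotopies. This gives $t=2$.

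\textbf{Types B and C.} Here the crossing $c$ lies on the axis, so $\tau(c)=c$ and $I_\tau$ preserves the cube structure at $c$. Alishahi's saddle and cylinder maps are built from cobordisms that are themselves $\tau$-symmetric: for Type B the saddle sits at the axis, and for Type C the two fixed points collide. Hence $f$, $g$ commute with $I_\tau$ up to at most the ambiguity of which sheet of a $\tau$-exchanged circle carries a dot.

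For Type B we arrange $h_f=h_g=0$ directly. For Type C we may need to move a dot across the saddle, which yields $h_f$ from the dot-sliding relations in \Cref{fig:TQFTd_rel}. We then check, cobordism by cobordism, that the Alishahi homotopy $h$ for $gf\simeq H\id$ satisfies $hI_\tau+I_\tau h\simeq_k h_{gf}$, where $h_{gf}=gh_f+h_gf$ is the homotopy from the first part of \Cref{prop:eq_chain}. Applying the third part of \Cref{prop:eq_chain} then gives $\eqg\eqf\simeq H\cdot\id$, and symmetrically $\eqf\eqg\simeq H\cdot\id$. Finally we apply the TQFT to obtain maps between the complexes $\eqCBN$.

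\textbf{Main obstacle.} We expect the hardest step to be Type C, and more precisely constructing the second-order homotopy $k$ there. In Type C the two fixed points collide, so a single circle in a resolution can be mapped to itself with its orientation reversed. As a result $I_\tau$ does not commute strictly with dotted cylinders, and the (NC) relation used in the neck-cutting for $gf$ is not $\tau$-symmetric. First we would write $h$ explicitly as a dotted cobordism. Then we would compute $hI_\tau+I_\tau h+h_{gf}$, and try to show that it equals $dk+kd$ with $k$ a multiple of the identity cylinder, using ($4$-Tu). If this fails, the fallback is to precompose $f$ with an equivariant Reidemeister isotopy, chosen so that the Type C change becomes locally a Type B change composed with symmetric moves.
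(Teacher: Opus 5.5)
Your Types A and B match the paper: strictly $I_\tau$-equivariant $f,g$; for Type A a second homotopy $k=S_lS_rI_\tau$ (your ``product of the two local homotopies''); for Type B, $h=S$ commutes with $I_\tau$, so $k=0$. The genuine gap is Type C.

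In the transvergent model in which $\eqbn{\cdot}$ is defined, a Type C change is \emph{not} a crossing change at an on-axis crossing with $\tau(c)=c$. Two arcs, each passing through a fixed point, are pushed through each other along the axis (they are tangent at the axis at the singular moment). Locally one therefore passes from a crossingless tangle $K'$ to a clasp made of two off-axis crossings $c_l$ and $c_r=\tau(c_l)$. So there is no single crossing at which Alishahi's $f_0,g_0$ can be applied, and your Type C argument never actually produces $f$ and $g$.

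What is needed are maps between the square $\bn{K}=\bigl[\bn{K_{00}}\to\bn{K_{10}}\oplus\bn{K_{01}}\to\bn{K_{11}}\bigr]$ and the one-term complex $\bn{K'}=\bn{K_{00}}$. One gets them by composing a Reidemeister II equivalence with Alishahi's map at one of the two crossings, and the result turns out to be strictly $\tau$-equivariant. Concretely, the paper takes $f=\begin{bmatrix}\id&0&0&S\epsilon D_l\end{bmatrix}$ and $g=\begin{bmatrix}D&0&0&\iota S\end{bmatrix}^T$. It gets $fg=D+S^2=H\cdot\id$ on the nose. It then writes down an asymmetric homotopy $h$ for $gf$, built from $S_l$, $J_r^{-1}\epsilon D_l$ and $\iota J_l$. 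Finally it needs a genuinely nonzero $k$, with entries $S\epsilon I_\tau$ and $\iota\epsilon I_\tau$, verified using (4-Tu).

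Your diagnosis of the obstacle is also misdirected, and the fallback cannot work. Since $I_\tau$ is a union of cylinders and dots slide freely within a component, $I_\tau$ commutes strictly with dotted cylinders, and it simply transports (NC) at a neck to (NC) at the image neck. Orientation reversal of a $\tau$-invariant circle plays no role; indeed $h_f=h_g=0$ in Type C as well. The real difficulty is the asymmetry of $h$, which comes from having to single out one of the two crossings.

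As for the fallback, equivariant isotopy preserves the local type of the singular moment. In Type B the two branches through the double point are exchanged by $\tau$. In Type C each branch is preserved by $\tau$ and the double point is a collision of fixed points. So no symmetric moves can turn a Type C change locally into a Type B change.

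The only legitimate way to reduce Type C to a single on-axis crossing change is to pass to intravergent diagrams, where it becomes the central crossing change and the Type B argument applies. That route, however, requires the Chen--Yang identification of the intravergent and transvergent versions of $\eqBN$, which you do not invoke.
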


\begin{defn}\label{def:ord}
    For an $\FF[H]$-module $M$, we say an element $x \in M$ is \emph{$H$-torsion} if there exists $n \geq 0$ such that $H^n \cdot x = 0$.
    The minimal such $n$ is called the \emph{$H$-torsion order} of $x$ and is denoted by $\ord(x)$.
    We define the \emph{maximal $H$-torsion order} of $M$ as
    \[ \ord(M) \coloneqq \max \{ \ord(x) \mid x \in M \text{ is } H\text{-torsion} \}. \]
\end{defn}

Note that $\ord(M) = 0$ if $M$ contains no non-zero $H$-torsion elements.
For brevity, we write $\ord(K)$ for $\ord(\BN(K))$ (resp. $\eqord(K)$ for $\ord(\eqBN(K))$) for a knot (resp. strongly invertible knot) $K$.

Before proving \Cref{prop:eq_cc}, we demonstrate how \Cref{thm:main} follows from it, utilizing the following purely algebraic lemma.

\begin{lem}[{\cite[Lemma~3.1]{Alishahi:2019}}]\label{lem:ord}
    Let $C$ and $C'$ be two chain complexes over $\FF[H]$.
    Suppose there exist chain maps $C \xrightleftharpoons[g]{f} C'$ such that $g \circ f \simeq H^t \cdot \id_{C}$ and $f \circ g \simeq H^t \cdot \id_{C'}$ for some $t \geq 0$.
    Then $|\ord(H_*(C)) - \ord(H_*(C'))| \leq t$.
\end{lem}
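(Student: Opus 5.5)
We need to prove Lemma (Alishahi 3.1): $|\ord(H_*(C)) - \ord(H_*(C'))| \le t$, given $gf \simeq H^t$ and $fg \simeq H^t$.

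By symmetry it suffices to show $\ord(H_*(C)) \le \ord(H_*(C')) + t$. Write $f_*, g_*$ for the induced maps on homology. Since chain homotopic maps induce equal maps on homology, we have $g_* f_* = H^t \cdot \id$ on $H_*(C)$. Moreover $f_*$ and $g_*$ are $\FF[H]$-linear, so they send $H$-torsion elements to $H$-torsion elements: if $H^n x = 0$ then $H^n f_*(x) = f_*(H^n x) = 0$.

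Now let $x \in H_*(C)$ be $H$-torsion and set $m = \ord(H_*(C'))$. The element $f_*(x)$ is $H$-torsion in $H_*(C')$, so $H^m f_*(x) = 0$; this uses the definition of $\ord$ as a maximum, hence $\ord(y)\le m$ for every torsion $y$. Applying $g_*$ gives
\[ 0 = g_*(H^m f_*(x)) = H^m g_* f_*(x) = H^{m+t} x. \]
Therefore $\ord(x) \le m + t$. Taking the maximum over all torsion $x$ yields $\ord(H_*(C)) \le \ord(H_*(C')) + t$.

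Exchanging the roles of $(C,f)$ and $(C',g)$, using $f_* g_* = H^t$, gives the reverse inequality. Together these give the claim.

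The only subtle point is ensuring the maxima are finite, so that the inequality is meaningful. If $\ord(H_*(C'))$ were infinite, the bound would hold trivially in the extended sense. In our application the complexes are finitely generated over the PID $\FF[H]$, so their homology is finitely generated and the torsion orders are bounded. I expect no real obstacle here; the key step is simply the computation displayed above.
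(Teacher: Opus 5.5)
Your proof is correct and follows the paper's argument. Both use $g_*f_* = H^t\cdot\id$ on homology and the fact that $f_*(x)$ is torsion of order at most $\ord(H_*(C'))$ to get $\ord(x)\le \ord(H_*(C'))+t$, then symmetrize. Your direct computation $H^{m+t}x = g_*(H^m f_*(x)) = 0$ is just a compact form of the paper's inequality chain $\ord(f_*(x)) \ge \ord(g_*f_*(x)) = \ord(H^t x) \ge \ord(x)-t$.
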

\begin{proof}
    For an $H$-torsion element $x \in H_*(C)$, its image $f_*(x) \in H_*(C')$ is also an $H$-torsion element.
    Since a module homomorphism cannot increase the $H$-torsion order, we have
    \[ \ord(f_*(x)) \geq \ord(g_*(f_*(x))) = \ord(H^t \cdot x) \geq \ord(x) - t, \]
    which implies
    \[ t \geq \ord(x) - \ord(f_*(x)) \geq \ord(x) - \ord(H_*(C')). \]
    Taking the maximum over all $x \in H_*(C)$, we obtain $t \geq \ord(H_*(C)) - \ord(H_*(C'))$.
    By symmetry, it follows that $|\ord(H_*(C)) - \ord(H_*(C'))| \leq t$.
\end{proof}

\begin{proof}[Proof of \Cref{thm:main}]
    The theorem follows from \Cref{prop:eq_cc}, \Cref{lem:ord}, and the fact that $\eqord(U) = 0$.
\end{proof}

\begin{rmk}
    In general, the same proof establishes a lower bound for the \emph{equivariant Gordian distance} $\tilde{d}(K,J)$ between two given strongly invertible knots $K$ and $J$, defined as the minimal number of self-intersections occurring during an involution-invariant homotopy that transforms $K$ into $J$.
    Indeed, the inequality $|\eqord(K)-\eqord(J)| \leq \tilde{d}(K,J)$ holds. 
\end{rmk}

Now we prove \Cref{prop:eq_cc} by considering each case separately.
Let $K'$ be a transvergent diagram obtained from $K$ by a Type $X$ equivariant crossing change.\footnote{Henceforth, we use $K$ and $K'$ to denote both the knots and their diagrams. The letter $D$ will refer to a specific morphism.}
We will show that:
\begin{enumerate}
    \item There exist two maps $\bn{K} \xrightleftharpoons[g]{f} \bn{K'}$ that are strictly $I_\tau$-equivariant (i.e., $I_\tau f = fI_\tau$ and $I_\tau g = g I_\tau$); so that $f,g$ are homotopy equivariant chain maps via $h_f=0$ and $h_g=0$.
    \item There exist homotopies $h, h'$ such that $gf \simeq_h H^t \cdot \id_{\bn{K}}$ and $fg \simeq_{h'} H^t\cdot \id_{\bn{K'}}$ (where $t=2$ if $X=A$, otherwise $t=1$).
    \item There exist homotopies $k, k'$ such that $I_\tau h + h I_\tau \simeq_k 0$ and $I_\tau h' + h' I_\tau \simeq_{k'} 0$.
\end{enumerate}
By \Cref{prop:eq_chain}, this implies \Cref{prop:eq_cc}.

The maps $f$ and $g$ are based on Alishahi's construction \cite{Alishahi:2019}.
Suppose $K'$ is obtained from $K$ by a crossing change at $c$. Let $K_i$ be the diagram where $c$ is $i$-resolved for $i \in \{0, 1\}$.
We can represent $\bn{K}$ and $\bn{K'}$ as mapping cones:
\[ \bn{K} = \Cone\left(\bn{K_0} \xrightarrow{S} \bn{K_1}\right), \qquad \bn{K'} = \Cone\left(\bn{K_1} \xrightarrow{S} \bn{K_0}\right), \]
where $S$ denotes the local saddle map between the two resolutions.\footnote{While there are two saddle maps $S_0 \colon \bn{K_0}\to\bn{K_1}$ and $S_1\colon\bn{K_1}\to\bn{K_0}$, we omit subscripts when the direction is clear.}

\begin{figure}
    \centering
    \tikzset{every picture/.style={line width=0.75pt}} %set default line width to 0.75pt        

\begin{tikzpicture}[x=0.75pt,y=0.75pt,yscale=-1,xscale=1]
%uncomment if require: \path (0,120); %set diagram left start at 0, and has height of 120

%Shape: Ellipse [id:dp28522696791120605] 
\draw   (170,45) .. controls (170,25.67) and (185.67,10) .. (205,10) .. controls (224.33,10) and (240,25.67) .. (240,45) .. controls (240,64.33) and (224.33,80) .. (205,80) .. controls (185.67,80) and (170,64.33) .. (170,45) -- cycle ;
%Curve Lines [id:da11337562565736858] 
\draw [line width=2.25]    (230,20) .. controls (210,39.8) and (200,39.8) .. (180,20) ;
%Curve Lines [id:da5865125368127954] 
\draw [line width=2.25]    (230,70) .. controls (210,49.8) and (200,49.8) .. (180,70) ;
%Shape: Ellipse [id:dp8923712295103718] 
\draw   (10,45) .. controls (10,25.67) and (25.67,10) .. (45,10) .. controls (64.33,10) and (80,25.67) .. (80,45) .. controls (80,64.33) and (64.33,80) .. (45,80) .. controls (25.67,80) and (10,64.33) .. (10,45) -- cycle ;
%Shape: Ellipse [id:dp8282360612950087] 
\draw   (250,45) .. controls (250,25.67) and (265.67,10) .. (285,10) .. controls (304.33,10) and (320,25.67) .. (320,45) .. controls (320,64.33) and (304.33,80) .. (285,80) .. controls (265.67,80) and (250,64.33) .. (250,45) -- cycle ;
%Curve Lines [id:da3898804577658781] 
\draw [line width=2.25]    (260,70) .. controls (280,50) and (280,39.8) .. (260,20) ;
%Curve Lines [id:da05984028829261512] 
\draw [line width=2.25]    (310,70) .. controls (290,49.8) and (290,40) .. (310,20) ;
%Straight Lines [id:da7570286604206071] 
\draw [line width=2.25]    (20,20) -- (70,70) ;
%Straight Lines [id:da03659043077012458] 
\draw [line width=2.25]    (20,70) -- (40,50) ;
%Straight Lines [id:da6836886888322646] 
\draw [line width=2.25]    (50,40) -- (70,20) ;
%Shape: Ellipse [id:dp45023777641201346] 
\draw   (90,45) .. controls (90,25.67) and (105.67,10) .. (125,10) .. controls (144.33,10) and (160,25.67) .. (160,45) .. controls (160,64.33) and (144.33,80) .. (125,80) .. controls (105.67,80) and (90,64.33) .. (90,45) -- cycle ;
%Straight Lines [id:da3954401938879486] 
\draw [line width=2.25]    (100,20) -- (120,40) ;
%Straight Lines [id:da31493321382393646] 
\draw [line width=2.25]    (100,70) -- (150,20) ;
%Straight Lines [id:da5375616234839437] 
\draw [line width=2.25]    (130,50) -- (150,70) ;

% Text Node
\draw (41,92.4) node [anchor=north west][inner sep=0.75pt]    {$K$};
% Text Node
\draw (119,92.4) node [anchor=north west][inner sep=0.75pt]    {$K'$};
% Text Node
\draw (199,92.4) node [anchor=north west][inner sep=0.75pt]    {$K_{0}$};
% Text Node
\draw (277,92.4) node [anchor=north west][inner sep=0.75pt]    {$K_{1}$};

\end{tikzpicture}
    \caption{Local pictures of diagrams $K, K', K_0$, and $K_1$.}
    \label{fig:ali_diag}
\end{figure}
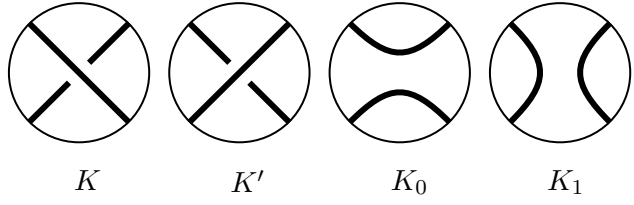

Define maps $\bn{K} \xrightleftharpoons[g_0]{f_0} \bn{K'}$ using the following diagram:
% https://tikzcd.yichuanshen.de/#N4Igdg9gJgpgziAXAbVABwnAlgFyxMJZAJgBpiBdUkANwEMAbAVxiRAB12AjMYAaQD6ARgC+IEaXSZc+QigAs5KrUYs2nHvwEAGMRKnY8BIou3L6zVog7deg3eMkgMh2UQBspM9QtrrGu2E9JxcZYxQhJR9VKxAAXkcDMLlkSO8VSzYE-WdpIxTtL3MY9Vt+YKT8okLKaMz-Mr4Acj1lGCgAc3giUAAzACcIAFskMhAcCCQAZjq-EABlRJAB4aRC8cnESJAGOi4YBgAFPLdrfqwOgAscEFnYxZyVkcQxiaRt-bAoafXfWM4sN9HoNnts3i9qJ9vogpr8StYACJLJ7Tajg9ZQn53NhI4GrRDrcEzECYmFw+o2QHIkFITwbJAAdkhMC+WIyc161PxTPpiDppNh2OsHXEFBEQA
\[
\begin{tikzcd}
\bn{K} \arrow[dd, "f_0", shift left, red]  & = &                                                            &  & \bn{K_0} \arrow[rr, "S"] \arrow[dd, "\id", shift left, red, near start] &  & \bn{K_1} \arrow[lllldd, "D", shift left, red, near start] \\
                                   &   &                                                            &  &                                                       &  &                                         \\
\bn{K'} \arrow[uu, "g_0", shift left, blue] & = & \bn{K_1} \arrow[rr, "S"'] \arrow[rrrruu, "\id", shift left, blue, near start] &  & \bn{K_0} \arrow[uu, "D", shift left, blue, near start]                   &  &                                        
\end{tikzcd}
\]
Here, $D$ is a cobordism map consisting of the sum of two cylinders, each with a dot placed on either side of the crossing $c$ (see \Cref{fig:D}).\footnote{Again, we omit subscripts for $D$.}

Regarding the maps $S$ and $D$, we note the following: if the two arcs are connected outside the local picture, $D$ vanishes because the dot can be moved from one side to the other.
Similarly, $SD=0$ and $DS=0$.
Finally, the relation (NC) is expressed as $S^2 = D + H \cdot \id$.

The composition $g_0f_0 + H\cdot\id\colon \bn{K} \to \bn{K}$ can be visualized as:
% https://tikzcd.yichuanshen.de/#N4Igdg9gJgpgziAXAbVABwnAlgFyxMJZABgBpiBdUkANwEMAbAVxiRAB12AjMYAaQC+IAaXSZc+QijIAmKrUYs2nHvyEix2PASIBGcvPrNWiEAF5hokBi2S9pOdSNLTFjdfHapyGQaeKTDm5ePgB9YnUrGwkdFAAWPwVjZWD+UN1IzRjvX0cklyDVMIjLLK8iBLznQJUQ9PV5GCgAc3giUAAzACcIAFskMhAcCCR9EC4YMCgkAGZB6rZmjtKQbr6B6mHR6gmppABaOf9k0wAJTgBjKAgcAAJOLGnqBjoJhgAFTztTLqxmgAscCs1v1EAkhiNEABWY4FADKwJ6oIAbJtIQB2Z6vGAfL6xEC-AFA2GBBHuEFIcFbRCo8aTaaII75QIAEUR6zBaKQtN2DMOgxeb0+tnxhMBIBJbHO7CuN3u7Ee7NBMIhSExdL2iH5WKFeKkBL+4slZ3lsruD2m5KRSBV1PVvNm8wCbDZVo5tshtIWpgRzywYECUDocH+TWEFAEQA
\[\begin{tikzcd}
\bn{K} \arrow[dd, "g_0f_0+H\cdot \id", red]& = & \bn{K_0} \arrow[rr, "S"] \arrow[dd, "D+H\cdot \id",red]&  & \bn{K_1}\arrow[dd, "D+H\cdot \id", red] \arrow[lldd, "S" description, dashed, green] \\
                                                                         &   &                                                                                           &  &                                                                                                      \\
\bn{K}                                                                   & = & \bn{K_0} \arrow[rr, "S"']                                                                 &  & \bn{K_1}                                                                                            
\end{tikzcd}\]

Using a homotopy $h_0$ defined by the saddle map $S \colon \bn{K_1} \to \bn{K_0}$, one can verify $g_0f_0 \simeq_{h_0} H\cdot \id$ via the relation $S^2=D+H\cdot\id$.
The proof for $f_0g_0 \simeq_{h_0'} H\cdot \id$ is analogous.

\begin{figure}
    \centering
    $D=\includegraphics[valign=c,scale=.2]{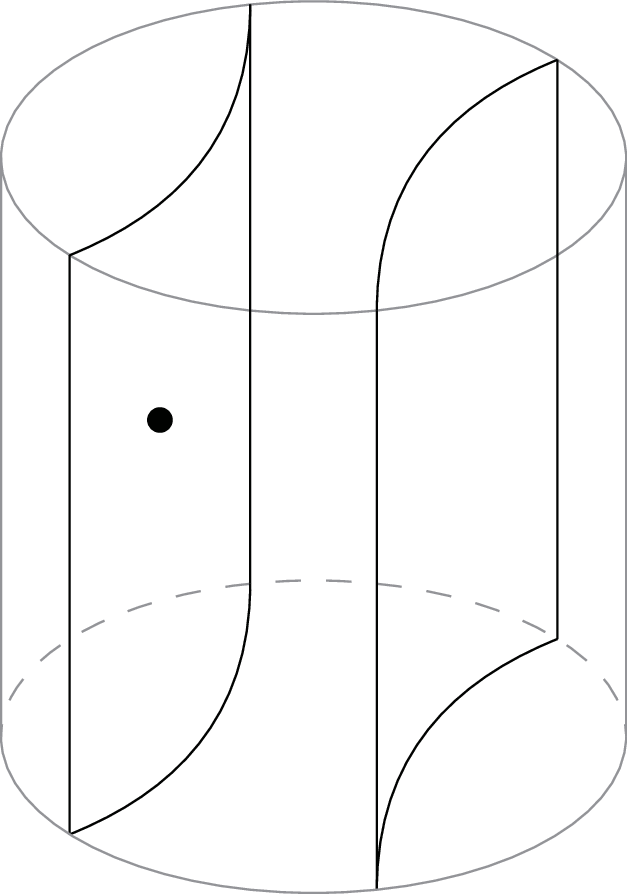}+\includegraphics[valign=c,scale=.2]{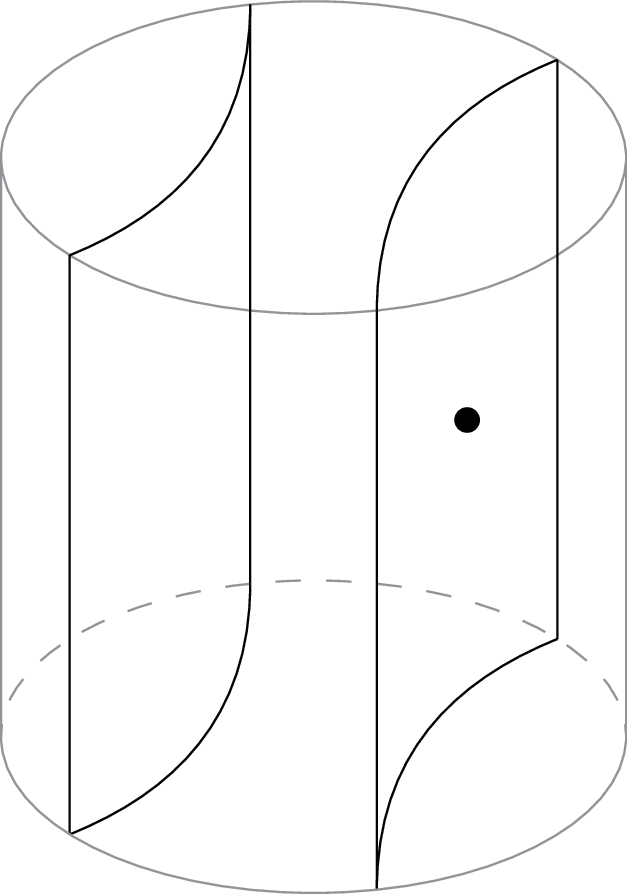}$
    \caption{The chain map $D$.}
    \label{fig:D}
\end{figure}

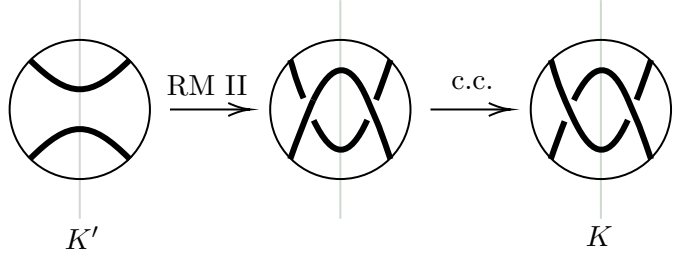
\begin{figure}
    \centering
    \tikzset{every picture/.style={line width=0.75pt}} %set default line width to 0.75pt        

\begin{tikzpicture}[x=0.75pt,y=0.75pt,yscale=-1,xscale=1]
%uncomment if require: \path (0,164); %set diagram left start at 0, and has height of 164

%Straight Lines [id:da004976924823152484] 
\draw [color={rgb, 255:red, 155; green, 155; blue, 155 }  ,draw opacity=1 ]   (45,10) -- (45,120) ;
%Straight Lines [id:da653119681161629] 
\draw [color={rgb, 255:red, 155; green, 155; blue, 155 }  ,draw opacity=1 ]   (305,10) -- (305,120) ;
%Shape: Ellipse [id:dp6467440145870536] 
\draw   (10,65) .. controls (10,45.67) and (25.67,30) .. (45,30) .. controls (64.33,30) and (80,45.67) .. (80,65) .. controls (80,84.33) and (64.33,100) .. (45,100) .. controls (25.67,100) and (10,84.33) .. (10,65) -- cycle ;
%Curve Lines [id:da8553148323375497] 
\draw [line width=2.25]    (70,40) .. controls (50,59.8) and (40,59.8) .. (20,40) ;
%Curve Lines [id:da8568788339484532] 
\draw [line width=2.25]    (70,90) .. controls (50,69.8) and (40,69.8) .. (20,90) ;
%Shape: Ellipse [id:dp8765567400542256] 
\draw   (270,65) .. controls (270,45.67) and (285.67,30) .. (305,30) .. controls (324.33,30) and (340,45.67) .. (340,65) .. controls (340,84.33) and (324.33,100) .. (305,100) .. controls (285.67,100) and (270,84.33) .. (270,65) -- cycle ;
%Curve Lines [id:da47221401513062955] 
\draw [line width=2.25]    (330,40) .. controls (327.45,47.69) and (325.06,54.4) .. (322.8,60.13) ;
%Curve Lines [id:da4483392137391803] 
\draw [line width=2.25]    (330,90) .. controls (328.22,84.65) and (326.52,79.78) .. (324.89,75.39) .. controls (323.25,71) and (327.19,80.47) .. (318.65,60.36) .. controls (310.1,40.25) and (301.26,40.18) .. (291.51,60.34) ;
%Curve Lines [id:da5559523722472429] 
\draw [line width=2.25]    (318.43,70.29) .. controls (309.6,90) and (301.26,90.4) .. (291.68,70.46) .. controls (282.1,50.52) and (281.54,43.9) .. (280,40) ;
%Curve Lines [id:da5375304767209766] 
\draw [line width=2.25]    (286.9,70.97) .. controls (285.3,74.98) and (282.44,82.78) .. (280,90) ;
%Straight Lines [id:da38890407563518503] 
\draw [color={rgb, 255:red, 155; green, 155; blue, 155 }  ,draw opacity=1 ]   (175,10) -- (175,120) ;
%Shape: Ellipse [id:dp5973047821452704] 
\draw   (140,65) .. controls (140,45.67) and (155.67,30) .. (175,30) .. controls (194.33,30) and (210,45.67) .. (210,65) .. controls (210,84.33) and (194.33,100) .. (175,100) .. controls (155.67,100) and (140,84.33) .. (140,65) -- cycle ;
%Curve Lines [id:da5051977011373735] 
\draw [line width=2.25]    (200,40) .. controls (197.45,47.69) and (195.06,54.4) .. (192.8,60.13) ;
%Curve Lines [id:da9181662735948685] 
\draw [line width=2.25]    (200,90) .. controls (198.22,84.65) and (196.52,79.78) .. (194.89,75.39) .. controls (193.25,71) and (197.19,80.47) .. (188.65,60.36) .. controls (180.1,40.25) and (171.26,40.18) .. (161.51,60.34) ;
%Curve Lines [id:da09937756655683128] 
\draw [line width=2.25]    (156.65,59.35) .. controls (151.82,47.8) and (151.21,43.06) .. (150,40) ;
%Curve Lines [id:da2644053333430587] 
\draw [line width=2.25]    (161.51,60.34) .. controls (159,65.75) and (152.44,82.78) .. (150,90) ;
%Straight Lines [id:da8474350781393205] 
\draw    (90,65) -- (128,65) ;
\draw [shift={(130,65)}, rotate = 180] [color={rgb, 255:red, 0; green, 0; blue, 0 }  ][line width=0.75]    (10.93,-3.29) .. controls (6.95,-1.4) and (3.31,-0.3) .. (0,0) .. controls (3.31,0.3) and (6.95,1.4) .. (10.93,3.29)   ;
%Curve Lines [id:da3817132490967271] 
\draw [line width=2.25]    (188.43,70.29) .. controls (179.6,90) and (171.26,90.4) .. (161.68,70.46) ;
%Straight Lines [id:da4566362378965857] 
\draw    (220,65) -- (258,65) ;
\draw [shift={(260,65)}, rotate = 180] [color={rgb, 255:red, 0; green, 0; blue, 0 }  ][line width=0.75]    (10.93,-3.29) .. controls (6.95,-1.4) and (3.31,-0.3) .. (0,0) .. controls (3.31,0.3) and (6.95,1.4) .. (10.93,3.29)   ;

% Text Node
\draw (296,122.4) node [anchor=north west][inner sep=0.75pt]    {$K$};
% Text Node
\draw (36,122.4) node [anchor=north west][inner sep=0.75pt]    {$K'$};
% Text Node
\draw (87,46) node [anchor=north west][inner sep=0.75pt]   [align=left] {RM II};
% Text Node
\draw (229,47) node [anchor=north west][inner sep=0.75pt]   [align=left] {c.c.};

\end{tikzpicture}
    \caption{A diagrammatic description of the construction of two maps $f$ and $g$ for the Type C equivariant crossing change.}
    \label{fig:typeC_composition}
\end{figure}

In the forthcoming proof, we construct the maps $f$ and $g$ as follows:
\begin{itemize}
    \item \textbf{(Type A)} The map $f$ is obtained by applying $f_0$ to the left crossing first, followed by the right crossing. The map $g$ is constructed by the reversed process.
    \item \textbf{(Type B)} The maps $f$ and $g$ are are essentially identical to $f_0$ and $g_0$.
    \item \textbf{(Type C)} The map $f$ is obtained by applying the chain homotopy equivalence for the Reidemeister move II and then applying $f_0$ to one of the crossings. The map $g$ is constructed by the reversed process. See \Cref{fig:typeC_composition} for a diagrammatic description.
\end{itemize}
We also provide chain complex diagrams to illustrate the chain maps and homotopies.
The chain maps are indicated by solid red (or blue) arrows, while the homotopies are represented by green dashed arrows.

\subsection{Type A equivariant crossing change}

\begin{figure}
    \centering
    \tikzset{every picture/.style={line width=0.75pt}} %set default line width to 0.75pt        

\begin{tikzpicture}[x=0.75pt,y=0.75pt,yscale=-1,xscale=1]
%uncomment if require: \path (0,460); %set diagram left start at 0, and has height of 460

%Shape: Ellipse [id:dp35122842157551837] 
\draw   (10,75) .. controls (10,55.67) and (25.67,40) .. (45,40) .. controls (64.33,40) and (80,55.67) .. (80,75) .. controls (80,94.33) and (64.33,110) .. (45,110) .. controls (25.67,110) and (10,94.33) .. (10,75) -- cycle ;
%Straight Lines [id:da2805105673093593] 
\draw [line width=2.25]    (20,50) -- (70,100) ;
%Straight Lines [id:da5844198334365965] 
\draw [line width=2.25]    (20,100) -- (40,80) ;
%Straight Lines [id:da02225492516496541] 
\draw [line width=2.25]    (50,70) -- (70,50) ;
%Straight Lines [id:da8533467612745235] 
\draw [color={rgb, 255:red, 155; green, 155; blue, 155 }  ,draw opacity=1 ][line width=0.75]    (90,10) -- (90,140) ;
%Shape: Ellipse [id:dp3324727092737022] 
\draw   (100,75) .. controls (100,55.67) and (115.67,40) .. (135,40) .. controls (154.33,40) and (170,55.67) .. (170,75) .. controls (170,94.33) and (154.33,110) .. (135,110) .. controls (115.67,110) and (100,94.33) .. (100,75) -- cycle ;
%Straight Lines [id:da844580070494403] 
\draw [line width=2.25]    (110,50) -- (160,100) ;
%Straight Lines [id:da7986151284290206] 
\draw [line width=2.25]    (110,100) -- (130,80) ;
%Straight Lines [id:da04612983741990395] 
\draw [line width=2.25]    (140,70) -- (160,50) ;
%Straight Lines [id:da38348951654641183] 
\draw [color={rgb, 255:red, 155; green, 155; blue, 155 }  ,draw opacity=1 ]   (90,190) -- (90,320) ;
%Shape: Ellipse [id:dp538314850920491] 
\draw   (10,255) .. controls (10,235.67) and (25.67,220) .. (45,220) .. controls (64.33,220) and (80,235.67) .. (80,255) .. controls (80,274.33) and (64.33,290) .. (45,290) .. controls (25.67,290) and (10,274.33) .. (10,255) -- cycle ;
%Straight Lines [id:da8185540935979925] 
\draw [line width=2.25]    (20,230) -- (40,250) ;
%Straight Lines [id:da28933692831573676] 
\draw [line width=2.25]    (20,280) -- (70,230) ;
%Straight Lines [id:da9073184076032119] 
\draw [line width=2.25]    (50,260) -- (70,280) ;
%Shape: Ellipse [id:dp8780831907005935] 
\draw   (100,255) .. controls (100,235.67) and (115.67,220) .. (135,220) .. controls (154.33,220) and (170,235.67) .. (170,255) .. controls (170,274.33) and (154.33,290) .. (135,290) .. controls (115.67,290) and (100,274.33) .. (100,255) -- cycle ;
%Straight Lines [id:da18463085572258675] 
\draw [line width=2.25]    (110,230) -- (130,250) ;
%Straight Lines [id:da6366774353461291] 
\draw [line width=2.25]    (110,280) -- (160,230) ;
%Straight Lines [id:da7126155178175672] 
\draw [line width=2.25]    (140,260) -- (160,280) ;
%Shape: Ellipse [id:dp08993937730839285] 
\draw   (200,75) .. controls (200,55.67) and (215.67,40) .. (235,40) .. controls (254.33,40) and (270,55.67) .. (270,75) .. controls (270,94.33) and (254.33,110) .. (235,110) .. controls (215.67,110) and (200,94.33) .. (200,75) -- cycle ;
%Straight Lines [id:da8035497506896851] 
\draw [color={rgb, 255:red, 155; green, 155; blue, 155 }  ,draw opacity=1 ][line width=0.75]    (280,10) -- (280,140) ;
%Shape: Ellipse [id:dp6250024505783105] 
\draw   (290,75) .. controls (290,55.67) and (305.67,40) .. (325,40) .. controls (344.33,40) and (360,55.67) .. (360,75) .. controls (360,94.33) and (344.33,110) .. (325,110) .. controls (305.67,110) and (290,94.33) .. (290,75) -- cycle ;
%Straight Lines [id:da06529974383339798] 
\draw [color={rgb, 255:red, 155; green, 155; blue, 155 }  ,draw opacity=1 ]   (280,190) -- (280,320) ;
%Shape: Ellipse [id:dp551042497165022] 
\draw   (200,255) .. controls (200,235.67) and (215.67,220) .. (235,220) .. controls (254.33,220) and (270,235.67) .. (270,255) .. controls (270,274.33) and (254.33,290) .. (235,290) .. controls (215.67,290) and (200,274.33) .. (200,255) -- cycle ;
%Shape: Ellipse [id:dp26984792358513243] 
\draw   (290,255) .. controls (290,235.67) and (305.67,220) .. (325,220) .. controls (344.33,220) and (360,235.67) .. (360,255) .. controls (360,274.33) and (344.33,290) .. (325,290) .. controls (305.67,290) and (290,274.33) .. (290,255) -- cycle ;
%Shape: Ellipse [id:dp6041747856283227] 
\draw   (390,75) .. controls (390,55.67) and (405.67,40) .. (425,40) .. controls (444.33,40) and (460,55.67) .. (460,75) .. controls (460,94.33) and (444.33,110) .. (425,110) .. controls (405.67,110) and (390,94.33) .. (390,75) -- cycle ;
%Straight Lines [id:da08122325521140905] 
\draw [color={rgb, 255:red, 155; green, 155; blue, 155 }  ,draw opacity=1 ][line width=0.75]    (470,10) -- (470,140) ;
%Shape: Ellipse [id:dp1930383843367256] 
\draw   (480,75) .. controls (480,55.67) and (495.67,40) .. (515,40) .. controls (534.33,40) and (550,55.67) .. (550,75) .. controls (550,94.33) and (534.33,110) .. (515,110) .. controls (495.67,110) and (480,94.33) .. (480,75) -- cycle ;
%Straight Lines [id:da8676110934525099] 
\draw [color={rgb, 255:red, 155; green, 155; blue, 155 }  ,draw opacity=1 ]   (470,190) -- (470,320) ;
%Shape: Ellipse [id:dp8363443597083274] 
\draw   (390,255) .. controls (390,235.67) and (405.67,220) .. (425,220) .. controls (444.33,220) and (460,235.67) .. (460,255) .. controls (460,274.33) and (444.33,290) .. (425,290) .. controls (405.67,290) and (390,274.33) .. (390,255) -- cycle ;
%Shape: Ellipse [id:dp39420519935571985] 
\draw   (480,255) .. controls (480,235.67) and (495.67,220) .. (515,220) .. controls (534.33,220) and (550,235.67) .. (550,255) .. controls (550,274.33) and (534.33,290) .. (515,290) .. controls (495.67,290) and (480,274.33) .. (480,255) -- cycle ;
%Curve Lines [id:da15024055626554955] 
\draw [line width=2.25]    (260,50) .. controls (240,69.8) and (230,69.8) .. (210,50) ;
%Curve Lines [id:da5354510975319194] 
\draw [line width=2.25]    (260,100) .. controls (240,79.8) and (230,79.8) .. (210,100) ;
%Curve Lines [id:da9078932749991652] 
\draw [line width=2.25]    (350,50) .. controls (330,69.8) and (320,69.8) .. (300,50) ;
%Curve Lines [id:da28886054153454366] 
\draw [line width=2.25]    (350,100) .. controls (330,79.8) and (320,79.8) .. (300,100) ;
%Curve Lines [id:da6052982265279728] 
\draw [line width=2.25]    (540,50) .. controls (520,69.8) and (510,69.8) .. (490,50) ;
%Curve Lines [id:da4731285775088462] 
\draw [line width=2.25]    (540,100) .. controls (520,79.8) and (510,79.8) .. (490,100) ;
%Curve Lines [id:da5144275951503915] 
\draw [line width=2.25]    (260,230) .. controls (240,249.8) and (230,249.8) .. (210,230) ;
%Curve Lines [id:da9303956675058312] 
\draw [line width=2.25]    (260,280) .. controls (240,259.8) and (230,259.8) .. (210,280) ;
%Curve Lines [id:da21349353912307634] 
\draw [line width=2.25]    (400,100) .. controls (420,80) and (420,69.8) .. (400,50) ;
%Curve Lines [id:da22231038665583036] 
\draw [line width=2.25]    (450,100) .. controls (430,79.8) and (430,70) .. (450,50) ;
%Curve Lines [id:da6512969368219608] 
\draw [line width=2.25]    (300,280) .. controls (320,260) and (320,249.8) .. (300,230) ;
%Curve Lines [id:da7490388771233268] 
\draw [line width=2.25]    (350,280) .. controls (330,259.8) and (330,250) .. (350,230) ;
%Curve Lines [id:da7709344255916651] 
\draw [line width=2.25]    (400,280) .. controls (420,260) and (420,249.8) .. (400,230) ;
%Curve Lines [id:da698699468310522] 
\draw [line width=2.25]    (450,280) .. controls (430,259.8) and (430,250) .. (450,230) ;
%Curve Lines [id:da5132722277014677] 
\draw [line width=2.25]    (490,280) .. controls (510,260) and (510,249.8) .. (490,230) ;
%Curve Lines [id:da9299698552649537] 
\draw [line width=2.25]    (540,280) .. controls (520,259.8) and (520,250) .. (540,230) ;

% Text Node
\draw (81,142.4) node [anchor=north west][inner sep=0.75pt]    {$K$};
% Text Node
\draw (81,322.4) node [anchor=north west][inner sep=0.75pt]    {$K'$};
% Text Node
\draw (271,142.4) node [anchor=north west][inner sep=0.75pt]    {$K_{00}$};
% Text Node
\draw (271,322.4) node [anchor=north west][inner sep=0.75pt]    {$K_{01}$};
% Text Node
\draw (461,142.4) node [anchor=north west][inner sep=0.75pt]    {$K_{10}$};
% Text Node
\draw (461,322.4) node [anchor=north west][inner sep=0.75pt]    {$K_{11}$};

\end{tikzpicture}
    \caption{Local pictures of diagrams $K$, $K'$, and their resolutions. Each vertical line denotes the axis.}
    \label{fig:typeA_diag}
\end{figure}
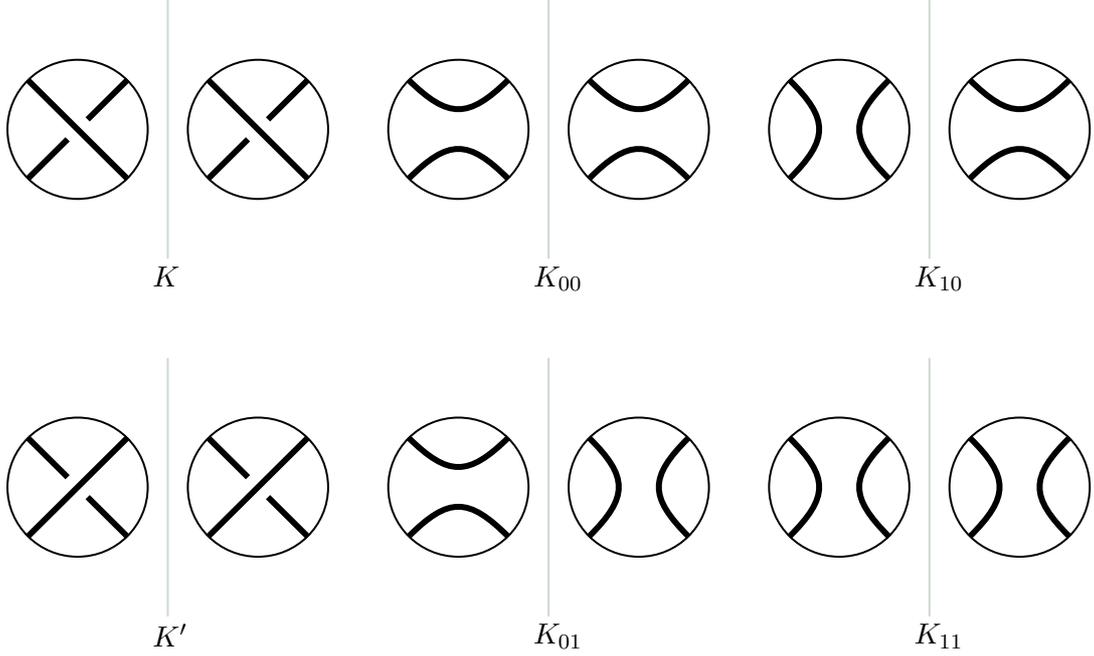

Suppose we have a transvergent diagram $K$, and let $c$ be an off-axis crossing.
Let $K'$ be the diagram obtained from $K$ by changing the crossings at both $c$ and $\tau c$.
Since there are two crossings, there are four possible resolution choices, as shown in \Cref{fig:typeA_diag}.
The formal Khovanov complexes $\bn{K}$ and $\bn{K'}$ can be expressed as follows:
\[
\begin{tikzcd}
          &  &                                                   & \bn{K_{10}}\arrow[dr,"S_r"]   &               \\
    \bn{K}&= & \bn{K_{00}} \arrow[ur, "S_l"]\arrow[dr, "S_r"]    &                               & \bn{K_{11}}   \\
          &  &                                                   & \bn{K_{01}}\arrow[ur,"S_l"]   &
\end{tikzcd},\]
and
\[
\begin{tikzcd}
          &  &                                                   & \bn{K_{01}}\arrow[dr,"S_r"]   &               \\
    \bn{K'}&= & \bn{K_{11}} \arrow[ur, "S_l"]\arrow[dr, "S_r"]    &                               & \bn{K_{00}}   \\
          &  &                                                   & \bn{K_{10}}\arrow[ur,"S_l"]   &
\end{tikzcd}.\]
Note that the involution $I_\tau$ maps $\bn{K_{ij}} \to \bn{K_{ji}}$ for $i,j \in \{0,1\}$.
We define two chain maps $\bn{K} \xrightleftharpoons[g]{f} \bn{K'}$ by
\[
f=
\begin{bmatrix}
    \gz&\gz&\gz&D_lD_r\\
    \gz&\gz&D_r&\gz\\
    \gz&D_l&\gz&\gz\\
    \id&\gz&\gz&\gz
\end{bmatrix}, \qquad
g=
\begin{bmatrix}
    \gz&\gz&\gz&\id\\
    \gz&\gz&D_l&\gz\\
    \gz&D_r&\gz&\gz\\
    D_lD_r&\gz&\gz&\gz
\end{bmatrix},
\]
where we decompose $\bn{K}$ as $\bn{K_{00}}\oplus\bn{K_{10}}\oplus\bn{K_{01}}\oplus\bn{K_{11}}$ and $\bn{K'}$ as $\bn{K_{11}}\oplus\bn{K_{01}}\oplus\bn{K_{10}}\oplus\bn{K_{00}}$.
The subscript $l$ (resp. $r$) indicates the application of the map $D$ to the left (resp. right) region.
It is straightforward to verify that $f$ and $g$ commute with $I_\tau$.
These maps are illustrated in \Cref{fig:typeA_map}.

\begin{figure}
    \centering
    \begin{tikzcd}
    &           &&\bn{K_{10}}\arrow[rrd,"S_r"]\arrow[rddddddd,"D_l",red,near start]&&\\
    \bn{K}\arrow[ddddd,"f",red]&=    &\bn{K_{00}}\arrow[rrrddddd,"\id",red,near start]\arrow[ru,"S_l"]\arrow[rrd,"S_r",crossing over]&&&\bn{K_{11}}\arrow[lllddddd,"D_lD_r",red,near start]\\
    &           &&&\bn{K_{01}}\arrow[lddd,"D_r"',red,near start]\arrow[ru,"S_l"]&\\
    &&&&&\\
    &&&&&\\
    &           &&\bn{K_{01}}\arrow[rrd,"S_r"]\arrow[rddddddd,"D_r",blue,near start]&&\\
    \bn{K'}\arrow[ddddd,"g",blue]&=    &\bn{K_{11}}\arrow[rrrddddd,"D_lD_r",blue,near start]\arrow[ru,"S_l"]\arrow[rrd,"S_r",crossing over]&&&\bn{K_{00}}\arrow[lllddddd,"\id",blue,near start]\\
    &           &&&\bn{K_{10}}\arrow[lddd,"D_l"',blue,near start]\arrow[ru,"S_l"]&\\
    &&&&&\\
    &&&&&\\
    &           &&\bn{K_{10}}\arrow[rrd,"S_r"]&&\\
    \bn{K}&=    &\bn{K_{00}}\arrow[ru,"S_l"]\arrow[rrd,"S_r"]&&&\bn{K_{11}}\\
    &           &&&\bn{K_{01}}\arrow[ru,"S_l"]&
\end{tikzcd}
    \caption{(Type A) Two chain maps $\bn{K} \xrightleftharpoons[g]{f} \bn{K'}$.}
    \label{fig:typeA_map}
\end{figure}

The composition $gf$ is the map $D_l D_r$.
We define a homotopy $h$ as:
\[
h=
\begin{bmatrix}
    \gz&H\cdot S_l&S_rD_l&\gz\\
    \gz&\gz&\gz&S_rD_l\\
    \gz&\gz&\gz&H\cdot S_l\\
    \gz&\gz&\gz&\gz
\end{bmatrix}.
\]
Refer \Cref{fig:typeA_h} to see a map $gf+H\cdot\id$ and a homotopy $h$ between $gf+H\cdot\id$ and the zero map.
Using the relations $S_l^2 = D_l + H\cdot \id$ and $S_r^2 = D_r + H\cdot \id$, one can directly verify that $H^2\cdot \id + gf = dh + hd$.

\begin{figure}
    \centering
    \resizebox{.95\textwidth}{!}{\begin{tikzcd}
    &           &&&\bn{K_{10}}&&\\
    \bn{K}&=    &\bn{K_{00}}&&&&&&\bn{K_{11}}\\
    &           &&&&&\bn{K_{01}}&\\
    &&&&&\\
    &&&&&\\
    &           &&&\bn{K_{10}}&&\\
    \bn{K}&=    &\bn{K_{00}}&&&&&&\bn{K_{11}}\\
    &           &&&&&\bn{K_{01}}&
    \arrow[from=2-1,to=7-1,"gf+H^2\cdot\id"',red]
    \arrow[from=2-3,to=7-3,"D_lD_r+H^2\cdot\id"',red]\arrow[from=1-5,to=6-5,"D_lD_r+H^2\cdot\id",red]\arrow[from=2-9,to=7-9,"D_lD_r+H^2\cdot\id",red]\arrow[from=1-5,to=7-3,"H\cdot S_l" description,dashed,green]\arrow[from=2-9,to=6-5,"S_rD_l" description,dashed,green,near end]
    \arrow[from=7-3,to=6-5,"S_l"']\arrow[from=6-5,to=7-9,"S_r"]
    \arrow[from=2-3,to=1-5,"S_l"]\arrow[from=1-5,to=2-9,"S_r"]
    \arrow[from=2-9,to=8-7,"H\cdot S_l" description,dashed,green,crossing over]
    \arrow[from=2-3,to=3-7,"S_r",crossing over]\arrow[from=3-7,to=2-9,"S_l",crossing over]\arrow[from=7-3,to=8-7,"S_r"]\arrow[from=8-7,to=7-9,"S_l"]
    \arrow[from=3-7,to=8-7,"D_lD_r+H^2\cdot\id",red,crossing over]
    \arrow[from=3-7,to=7-3,"S_rD_l" description,dashed,green,crossing over]
\end{tikzcd}}
    \caption{(Type A) Red arrow depicts the map $gf+H^2\cdot\id$, and green dashed arrow depicts the homotopy $h$.}
    \label{fig:typeA_h}
\end{figure}
\begin{figure}
    \centering
    \resizebox{.95\textwidth}{!}{\begin{tikzcd}
    &       &&&&\bn{K_{10}}&&\\
    \bn{K}&=&\bn{K_{00}}&&&&&&\bn{K_{11}}\\
    &       &&&&\bn{K_{01}}&&\\
    &&&&&\\
    &&&&&\\
    &       &&&&\bn{K_{10}}&&\\
    \bn{K}&=&\bn{K_{00}}&&&&&&\bn{K_{11}}\\
    &       &&&&\bn{K_{01}}&&
    \arrow[from=7-3,to=6-6,"S_l"']\arrow[from=6-6,to=7-9,"S_r"]\arrow[from=7-3,to=8-6,"S_r"]\arrow[from=8-6,to=7-9,"S_l"]
    \arrow[from=2-9,to=7-3,"S_lS_rI_\tau" description,green,dashed]
    \arrow[from=1-6,to=7-3,"I_\tau (H\cdot S_l) + S_r D_l I_\tau"',red]
    \arrow[from=2-3,to=1-6,"S_l"]\arrow[from=1-6,to=2-9,"S_r"]
    \arrow[from=2-3,to=3-6,"S_r",crossing over]\arrow[from=3-6,to=2-9,"S_l",crossing over]
    \arrow[from=3-6,to=7-3,"I_\tau S_rD_l+(H\cdot S_l)I_\tau"',red,crossing over]\arrow[from=2-9,to=8-6,"I_\tau (H\cdot S_l) + S_r D_l I_\tau",red,crossing over]
    \arrow[from=2-9,to=6-6,"I_\tau S_rD_l+(H\cdot S_l)I_\tau",red]
    \arrow[from=2-1,to=7-1,"I_\tau h+hI_\tau",red]
\end{tikzcd}}
    \caption{(Type A) Red arrow depicts the map $I_\tau h+hI_\tau$, and green dashed arrow depicts the homotopy $k$.}
    \label{fig:typeA_k}
\end{figure}
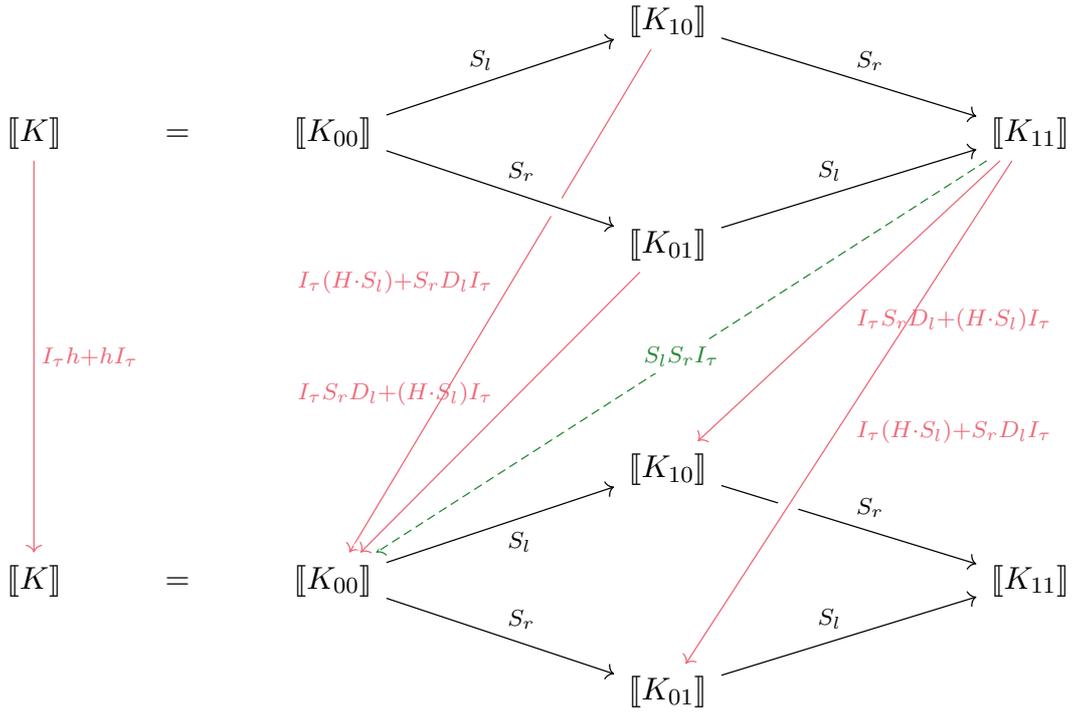

Finally, we must find a homotopy $k$ between $I_\tau h + h I_\tau$ and $0$; i.e., $I_\tau h + h I_\tau = dk + kd$.
We define $k$ by:
\[
k=
\begin{bmatrix}
    \gz&\gz&\gz&S_lS_rI_\tau\\
    \gz&\gz&\gz&\gz\\
    \gz&\gz&\gz&\gz\\
    \gz&\gz&\gz&\gz
\end{bmatrix}
\]
represented by the green dashed arrows in \Cref{fig:typeA_k}.
The equation $I_\tau h + h I_\tau = dk + kd$ can be expressed in matrix form as:
\begin{align*}
I_\tau h+h I_\tau &=
\begin{bmatrix}
    \gz&I_\tau (H\cdot S_l) + S_rD_lI_\tau&I_\tau S_rD_l+(H\cdot S_l)I_\tau&\gz\\
    \gz&\gz&\gz&I_\tau S_rD_l+(H\cdot S_l)I_\tau\\
    \gz&\gz&\gz&I_\tau (H\cdot S_l) + S_rD_lI_\tau\\
    \gz&\gz&\gz&\gz
\end{bmatrix}\\
&=
\begin{bmatrix}
    \gz&S_lS_rI_\tau S_r&S_lS_rI_\tau S_l&\gz\\
    \gz&\gz&\gz&S_lS_lS_rI_\tau\\
    \gz&\gz&\gz&S_rS_lS_rI_\tau\\
    \gz&\gz&\gz&\gz
\end{bmatrix} = dk+kd.
\end{align*}
The above equalities hold because $I_\tau$ commutes with $S$ and $D$ while swapping the subscripts $l \leftrightarrow r$.
For example, the map $\bn{K_{10}} \xrightarrow{I_\tau(H\cdot S_l) + S_r D_l I_\tau} \bn{K_{00}}$ is verified by:
\[ (S_l S_r I_\tau) S_r = S_l S_r^2 I_\tau = S_l (D_r + H\cdot \id) I_\tau = S_l D_r I_\tau + H S_l I_\tau = S_r D_l I_\tau + I_\tau (H S_l). \]

The verification for the other direction ($fg \simeq_{h'} H^2\cdot \id$ and $h' I_\tau + I_\tau h' \simeq_{k'} 0$) follows from an analogous argument.

\subsection{Type B equivariant crossing change}

Let $K$ be a transvergent diagram, and let $c$ be an on-axis crossing.
Let $K'$ be the transvergent diagram obtained by changing the crossing at $c$. As before, we can write:
\[ \bn{K} = \Cone\left(\bn{K_0} \xrightarrow{S} \bn{K_1}\right), \qquad \bn{K'} = \Cone\left(\bn{K_1} \xrightarrow{S} \bn{K_0}\right) \]
where $K_0$ and $K_1$ are the transvergent diagrams obtained by resolving $K$ at $c$; see \Cref{fig:typeB_diag}.
\begin{figure}
    \centering
    \tikzset{every picture/.style={line width=0.75pt}} %set default line width to 0.75pt        

\begin{tikzpicture}[x=0.75pt,y=0.75pt,yscale=-1,xscale=1]
%uncomment if require: \path (0,198); %set diagram left start at 0, and has height of 198

%Straight Lines [id:da47843963845601367] 
\draw [color={rgb, 255:red, 155; green, 155; blue, 155 }  ,draw opacity=1 ]   (45,10) -- (45,120) ;
%Straight Lines [id:da13701671102320845] 
\draw [color={rgb, 255:red, 155; green, 155; blue, 155 }  ,draw opacity=1 ]   (125,10) -- (125,120) ;
%Straight Lines [id:da01701986170799097] 
\draw [color={rgb, 255:red, 155; green, 155; blue, 155 }  ,draw opacity=1 ]   (205,10) -- (205,120) ;
%Straight Lines [id:da09987146495210453] 
\draw [color={rgb, 255:red, 155; green, 155; blue, 155 }  ,draw opacity=1 ]   (285,10) -- (285,120) ;
%Shape: Ellipse [id:dp9793553636127664] 
\draw   (170,65) .. controls (170,45.67) and (185.67,30) .. (205,30) .. controls (224.33,30) and (240,45.67) .. (240,65) .. controls (240,84.33) and (224.33,100) .. (205,100) .. controls (185.67,100) and (170,84.33) .. (170,65) -- cycle ;
%Curve Lines [id:da9743727486002948] 
\draw [line width=2.25]    (230,40) .. controls (210,59.8) and (200,59.8) .. (180,40) ;
%Curve Lines [id:da22900560191086872] 
\draw [line width=2.25]    (230,90) .. controls (210,69.8) and (200,69.8) .. (180,90) ;
%Shape: Ellipse [id:dp9600982917927745] 
\draw   (10,65) .. controls (10,45.67) and (25.67,30) .. (45,30) .. controls (64.33,30) and (80,45.67) .. (80,65) .. controls (80,84.33) and (64.33,100) .. (45,100) .. controls (25.67,100) and (10,84.33) .. (10,65) -- cycle ;
%Shape: Ellipse [id:dp94955578283303] 
\draw   (250,65) .. controls (250,45.67) and (265.67,30) .. (285,30) .. controls (304.33,30) and (320,45.67) .. (320,65) .. controls (320,84.33) and (304.33,100) .. (285,100) .. controls (265.67,100) and (250,84.33) .. (250,65) -- cycle ;
%Curve Lines [id:da193590666779053] 
\draw [line width=2.25]    (260,90) .. controls (280,70) and (280,59.8) .. (260,40) ;
%Curve Lines [id:da36751170295679303] 
\draw [line width=2.25]    (310,90) .. controls (290,69.8) and (290,60) .. (310,40) ;
%Straight Lines [id:da8613277319343091] 
\draw [line width=2.25]    (20,40) -- (70,90) ;
%Straight Lines [id:da9102148317785004] 
\draw [line width=2.25]    (20,90) -- (40,70) ;
%Straight Lines [id:da3535544201708938] 
\draw [line width=2.25]    (50,60) -- (70,40) ;
%Shape: Ellipse [id:dp1259124966941909] 
\draw   (90,65) .. controls (90,45.67) and (105.67,30) .. (125,30) .. controls (144.33,30) and (160,45.67) .. (160,65) .. controls (160,84.33) and (144.33,100) .. (125,100) .. controls (105.67,100) and (90,84.33) .. (90,65) -- cycle ;
%Straight Lines [id:da9408244538947608] 
\draw [line width=2.25]    (100,40) -- (120,60) ;
%Straight Lines [id:da4111778408415109] 
\draw [line width=2.25]    (100,90) -- (150,40) ;
%Straight Lines [id:da6251677542779728] 
\draw [line width=2.25]    (130,70) -- (150,90) ;

% Text Node
\draw (41,125.4) node [anchor=north west][inner sep=0.75pt]    {$K$};
% Text Node
\draw (119,125.4) node [anchor=north west][inner sep=0.75pt]    {$K'$};
% Text Node
\draw (199,125.4) node [anchor=north west][inner sep=0.75pt]    {$K_{0}$};
% Text Node
\draw (277,125.4) node [anchor=north west][inner sep=0.75pt]    {$K_{1}$};

\end{tikzpicture}
    \caption{Local pictures of diagrams $K$, $K'$ and its resolution. Each vertical line denotes the axis.}
    \label{fig:typeB_diag}
\end{figure}
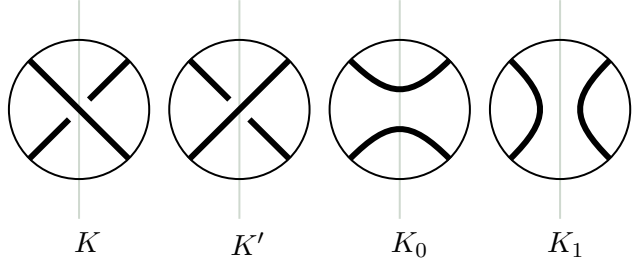
As mentioned earlier, the two maps $\bn{K} \xrightleftharpoons[g]{f} \bn{K'}$ and homotopies $h, h'$ are identical to Alishahi's maps $\bn{K} \xrightleftharpoons[g_0]{f_0} \bn{K'}$ and homotopies $h_0, h_0'$.
It is straightforward to verify that $f$ and $g$ are strictly $I_\tau$-equivariant.
The remaining task is to find homotopies $k$ and $k'$ such that $I_\tau h+ hI_\tau \simeq_k 0$ and $I_\tau h'+ h'I_\tau \simeq_{k'} 0$.

Since the homotopy $h$ consists solely of the map $S \colon \bn{K_1} \to \bn{K_0}$, and this map $S$ commutes with $I_\tau$, the term $I_\tau h + h I_\tau$ is simply the zero map (recall that we are working over $\FF$).
Consequently, we can take $k=0$, and similarly $k'=0$.

\subsection{Type C equivariant crossing change}\label{subsec:typeCproof}

Let $K'$ be a transvergent diagram with two nearby fixed points, as shown in \Cref{fig:typeC_diag}.
Applying a Type C equivariant crossing change yields the diagram $K$.
For $K$, there are four possible resolutions: $K_{00}, K_{10}, K_{01}$, and $K_{11}$.
Notably, $K_{00}$ is isotopic to $K'$.
We can express the complexes $\bn{K}$ and $\bn{K'}$ as:
\[
\begin{tikzcd}
    &&&\bn{K_{10}}\arrow[rd,"S_r"]&\\
    \bn{K}&=&\bn{K_{00}}\arrow[ru,"S_l"]\arrow[rd,"S_r"]&&\bn{K_{11}}\\
    &&&\bn{K_{01}}\arrow[ru,"S_l"]&
\end{tikzcd}, \qquad
\begin{tikzcd}
    \bn{K'}&=&\bn{K_{00}},
\end{tikzcd}
\]
where the involution $I_\tau$ maps $\bn{K_{ij}} \to \bn{K_{ji}}$ for $i,j \in \{0,1\}$.
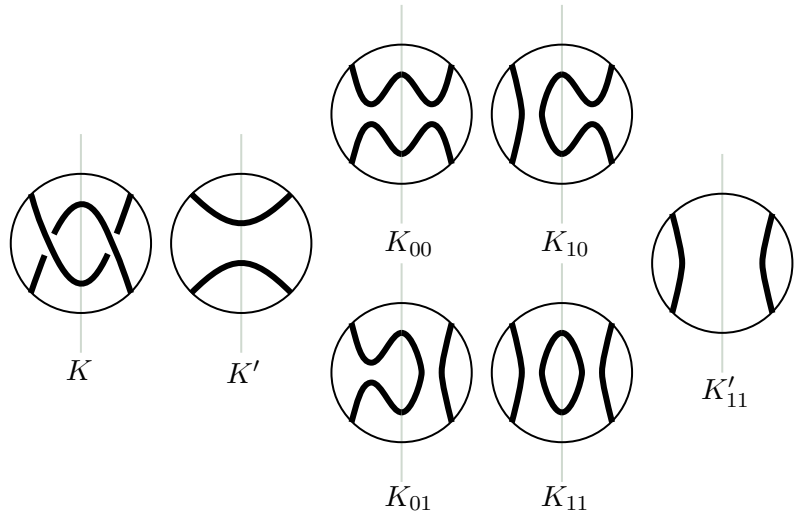
\begin{figure}
    \centering
    \tikzset{every picture/.style={line width=0.75pt}} %set default line width to 0.75pt        

\begin{tikzpicture}[x=0.75pt,y=0.75pt,yscale=-1,xscale=1]
%uncomment if require: \path (0,303); %set diagram left start at 0, and has height of 303

%Straight Lines [id:da8687359067952001] 
\draw [color={rgb, 255:red, 155; green, 155; blue, 155 }  ,draw opacity=1 ]   (125,85) -- (125,195) ;
%Straight Lines [id:da5981753852726095] 
\draw [color={rgb, 255:red, 155; green, 155; blue, 155 }  ,draw opacity=1 ]   (45,85) -- (45,195) ;
%Shape: Ellipse [id:dp5054947160602796] 
\draw   (90,140) .. controls (90,120.67) and (105.67,105) .. (125,105) .. controls (144.33,105) and (160,120.67) .. (160,140) .. controls (160,159.33) and (144.33,175) .. (125,175) .. controls (105.67,175) and (90,159.33) .. (90,140) -- cycle ;
%Curve Lines [id:da13062803769675124] 
\draw [line width=2.25]    (150,115) .. controls (130,134.8) and (120,134.8) .. (100,115) ;
%Curve Lines [id:da06494312139707403] 
\draw [line width=2.25]    (150,165) .. controls (130,144.8) and (120,144.8) .. (100,165) ;
%Shape: Ellipse [id:dp5485025936734615] 
\draw   (10,140) .. controls (10,120.67) and (25.67,105) .. (45,105) .. controls (64.33,105) and (80,120.67) .. (80,140) .. controls (80,159.33) and (64.33,175) .. (45,175) .. controls (25.67,175) and (10,159.33) .. (10,140) -- cycle ;
%Curve Lines [id:da6982460516443214] 
\draw [line width=2.25]    (70,115) .. controls (67.45,122.69) and (65.06,129.4) .. (62.8,135.13) ;
%Curve Lines [id:da885954244249777] 
\draw [line width=2.25]    (70,165) .. controls (68.22,159.65) and (66.52,154.78) .. (64.89,150.39) .. controls (63.25,146) and (67.19,155.47) .. (58.65,135.36) .. controls (50.1,115.25) and (41.26,115.18) .. (31.51,135.34) ;
%Curve Lines [id:da9503666922804973] 
\draw [line width=2.25]    (58.43,145.29) .. controls (49.6,165) and (41.26,165.4) .. (31.68,145.46) .. controls (22.1,125.52) and (21.54,118.9) .. (20,115) ;
%Curve Lines [id:da6409697186031835] 
\draw [line width=2.25]    (26.9,145.97) .. controls (25.3,149.98) and (22.44,157.78) .. (20,165) ;
%Straight Lines [id:da8211273897372715] 
\draw [color={rgb, 255:red, 155; green, 155; blue, 155 }  ,draw opacity=1 ]   (205,20) -- (205,130) ;
%Shape: Ellipse [id:dp2383406852731259] 
\draw   (170,75) .. controls (170,55.67) and (185.67,40) .. (205,40) .. controls (224.33,40) and (240,55.67) .. (240,75) .. controls (240,94.33) and (224.33,110) .. (205,110) .. controls (185.67,110) and (170,94.33) .. (170,75) -- cycle ;
%Curve Lines [id:da5533684951360266] 
\draw [line width=2.25]    (205,55) .. controls (199.8,55.05) and (195.2,69.95) .. (190,70) .. controls (184.8,70.05) and (182.7,60.4) .. (180,50) ;
%Curve Lines [id:da6464908221576002] 
\draw [line width=2.25]    (230,100) .. controls (227.65,90.4) and (224.95,79.95) .. (220,80) .. controls (215.05,80.05) and (210.2,94.95) .. (205,95) ;
%Curve Lines [id:da8313893698591005] 
\draw [line width=2.25]    (230,50) .. controls (227.7,60.45) and (224.95,69.95) .. (220,70) .. controls (215.05,70.05) and (210.2,54.95) .. (205,55) ;
%Curve Lines [id:da8067718055989407] 
\draw [line width=2.25]    (205,95) .. controls (199.8,95.05) and (195.2,79.95) .. (190,80) .. controls (184.8,80.05) and (182.65,90.4) .. (180,100) ;
%Straight Lines [id:da47988772789013145] 
\draw [color={rgb, 255:red, 155; green, 155; blue, 155 }  ,draw opacity=1 ]   (285,20) -- (285,130) ;
%Shape: Ellipse [id:dp6856566251658408] 
\draw   (250,75) .. controls (250,55.67) and (265.67,40) .. (285,40) .. controls (304.33,40) and (320,55.67) .. (320,75) .. controls (320,94.33) and (304.33,110) .. (285,110) .. controls (265.67,110) and (250,94.33) .. (250,75) -- cycle ;
%Curve Lines [id:da8521860686281323] 
\draw [line width=2.25]    (310,100) .. controls (307.65,90.4) and (304.95,79.95) .. (300,80) .. controls (295.05,80.05) and (290.2,94.95) .. (285,95) ;
%Curve Lines [id:da5499824267499418] 
\draw [line width=2.25]    (310,50) .. controls (307.7,60.45) and (304.95,69.95) .. (300,70) .. controls (295.05,70.05) and (290.2,54.95) .. (285,55) ;
%Curve Lines [id:da8331702094955906] 
\draw [line width=2.25]    (285,55) .. controls (279.8,55.05) and (275.05,70.35) .. (275,75) .. controls (274.95,79.65) and (280.05,95.1) .. (285,95) ;
%Curve Lines [id:da3316216337702944] 
\draw [line width=2.25]    (260,100) .. controls (262.55,90.35) and (264.95,79.65) .. (265,75) .. controls (265.05,70.35) and (262.7,60.4) .. (260,50) ;
%Straight Lines [id:da446886705986996] 
\draw [color={rgb, 255:red, 155; green, 155; blue, 155 }  ,draw opacity=1 ]   (205,150) -- (205,260) ;
%Shape: Ellipse [id:dp32247919626156496] 
\draw   (170,205) .. controls (170,185.67) and (185.67,170) .. (205,170) .. controls (224.33,170) and (240,185.67) .. (240,205) .. controls (240,224.33) and (224.33,240) .. (205,240) .. controls (185.67,240) and (170,224.33) .. (170,205) -- cycle ;
%Curve Lines [id:da38066449835968563] 
\draw [line width=2.25]    (205,185) .. controls (199.8,185.05) and (195.2,199.95) .. (190,200) .. controls (184.8,200.05) and (182.7,190.4) .. (180,180) ;
%Curve Lines [id:da8950721650180102] 
\draw [line width=2.25]    (205,225) .. controls (199.8,225.05) and (195.2,209.95) .. (190,210) .. controls (184.8,210.05) and (182.65,220.4) .. (180,230) ;
%Straight Lines [id:da4658080017255961] 
\draw [color={rgb, 255:red, 155; green, 155; blue, 155 }  ,draw opacity=1 ]   (285,150) -- (285,260) ;
%Shape: Ellipse [id:dp08326703649975109] 
\draw   (250,205) .. controls (250,185.67) and (265.67,170) .. (285,170) .. controls (304.33,170) and (320,185.67) .. (320,205) .. controls (320,224.33) and (304.33,240) .. (285,240) .. controls (265.67,240) and (250,224.33) .. (250,205) -- cycle ;
%Curve Lines [id:da4726730788979401] 
\draw [line width=2.25]    (285,185) .. controls (279.8,185.05) and (275.05,200.35) .. (275,205) .. controls (274.95,209.65) and (280.05,225.1) .. (285,225) ;
%Curve Lines [id:da24122135044258886] 
\draw [line width=2.25]    (260,230) .. controls (262.55,220.35) and (264.95,209.65) .. (265,205) .. controls (265.05,200.35) and (262.7,190.4) .. (260,180) ;
%Curve Lines [id:da5060359915413397] 
\draw [line width=2.25]    (230,230) .. controls (227.65,220.4) and (225.05,210.1) .. (225,205) .. controls (224.95,199.9) and (227.3,190.6) .. (230,180) ;
%Curve Lines [id:da9063706172771963] 
\draw [line width=2.25]    (205,225) .. controls (210.05,225.1) and (214.95,209.4) .. (215,205) .. controls (215.05,200.6) and (210.2,184.95) .. (205,185) ;
%Curve Lines [id:da9868530170240808] 
\draw [line width=2.25]    (310,230) .. controls (307.65,220.4) and (305.05,210.1) .. (305,205) .. controls (304.95,199.9) and (307.3,190.6) .. (310,180) ;
%Curve Lines [id:da42113380828302116] 
\draw [line width=2.25]    (285,225) .. controls (290.05,225.1) and (294.95,209.4) .. (295,205) .. controls (295.05,200.6) and (290.2,184.95) .. (285,185) ;
%Straight Lines [id:da8513929661859924] 
\draw [color={rgb, 255:red, 155; green, 155; blue, 155 }  ,draw opacity=1 ]   (365,95) -- (365,205) ;
%Shape: Ellipse [id:dp2629716342232966] 
\draw   (330,150) .. controls (330,130.67) and (345.67,115) .. (365,115) .. controls (384.33,115) and (400,130.67) .. (400,150) .. controls (400,169.33) and (384.33,185) .. (365,185) .. controls (345.67,185) and (330,169.33) .. (330,150) -- cycle ;
%Curve Lines [id:da08731501185486401] 
\draw [line width=2.25]    (340,175) .. controls (342.55,165.35) and (344.95,154.65) .. (345,150) .. controls (345.05,145.35) and (342.7,135.4) .. (340,125) ;
%Curve Lines [id:da2939728946604908] 
\draw [line width=2.25]    (390,175) .. controls (387.65,165.4) and (385.05,155.1) .. (385,150) .. controls (384.95,144.9) and (387.3,135.6) .. (390,125) ;

% Text Node
\draw (195,132.4) node [anchor=north west][inner sep=0.75pt]    {$K_{00}$};
% Text Node
\draw (273,132.4) node [anchor=north west][inner sep=0.75pt]    {$K_{10}$};
% Text Node
\draw (36,197.4) node [anchor=north west][inner sep=0.75pt]    {$K$};
% Text Node
\draw (116,197.4) node [anchor=north west][inner sep=0.75pt]    {$K'$};
% Text Node
\draw (195,260.4) node [anchor=north west][inner sep=0.75pt]    {$K_{01}$};
% Text Node
\draw (273,260.4) node [anchor=north west][inner sep=0.75pt]    {$K_{11}$};
% Text Node
\draw (353,205.4) node [anchor=north west][inner sep=0.75pt]    {$K_{11} '$};

\end{tikzpicture}
    \caption{Local pictures of diagrams $K$, $K'$ and its resolution $K_{00}, K_{10}, K_{01}$, and $K_{11}$. Each vertical line denotes the axis. $K_{11}'$ is a diagram obtained from $K_{11}$ by removing the circle.}
    \label{fig:typeC_diag}
\end{figure}
We connect the equivalent diagrams $K_{10}$, $K_{01}$, and $K_{11}'$ via isotopy, denoting the isomorphisms between the complexes by $J_l$ and $J_r$ as follows:
\[
\begin{tikzcd}
    \bn{K_{10}}\arrow[r,"J_r",shift left]&\bn{K_{11}'}\arrow[l,"J_r^{-1}",shift left]\arrow[r,"J_l^{-1}",shift left]&\bn{K_{01}}\arrow[l,"J_l",shift left]
\end{tikzcd}.
\]
Define the two maps $\bn{K} \xrightleftharpoons[g]{f} \bn{K'}$ by $f = \begin{bmatrix} \id & 0 & 0 & \alpha \end{bmatrix}$ and $g = \begin{bmatrix} D & 0 & 0 & \iota S \end{bmatrix}^T$, where we decompose $\bn{K}$ as $\bn{K_{00}}\oplus\bn{K_{10}}\oplus\bn{K_{01}}\oplus\bn{K_{11}}$.
These are illustrated in \Cref{fig:typeC_map}.
\begin{figure}
    \centering
    \begin{tikzcd}
                                            &           &                   &\bn{K_{10}}\arrow[rd,"S_r"]   & \\
    \bn{K}\arrow[dd,"f",shift left, red]   &=    & \bn{K_{00}}\arrow[ru,"S_l"]\arrow[rd,"S_r"]\arrow[dd,"\id",shift left, red]&              &\bn{K_{11}}\arrow[ddll,"\alpha=S\epsilon D_l",bend left,shift left,red]     \\
                                            &           &                   &\bn{K_{01}}\arrow[ru,"S_l"]   &  \\
    \bn{K'}\arrow[uu,"g",shift left, blue]   &=   &\bn{K_{00}}\arrow[uu,"D",shift left,blue]\arrow[uurr, "\iota S",shift left, bend right,blue]&              &
\end{tikzcd}
    \caption{(Type C) Two chain maps $\bn{K} \xrightleftharpoons[g]{f} \bn{K'}$.}
    \label{fig:typeC_map}
\end{figure}
The subscripts $l$ and $r$ on $S$ and $D$ indicate whether the local map is applied to the left or right of the axis.
The maps $\iota \colon \bn{K_{11}'} \to \bn{K_{11}}$ and $\epsilon \colon \bn{K_{11}} \to \bn{K_{11}'}$ represent the birth and death of the circle, respectively (see \Cref{fig:TQFT_gen1}).
The map $\alpha \coloneqq S \epsilon D_l$ is shown in \Cref{fig:alpha}.
Since $\alpha$ is symmetric, $f$ and $g$ commute with $I_\tau$.
\begin{figure}
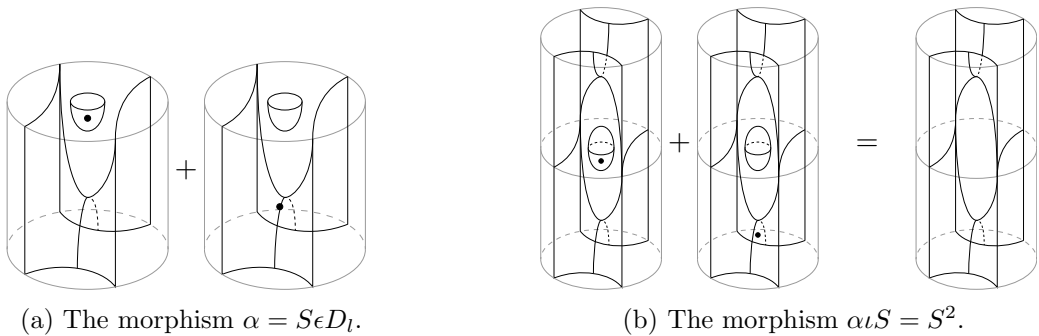

    \centering
    \begin{subfigure}{.48\linewidth}
        \centering
        \input{eq/alpha.tex}
        \caption{The morphism $\alpha=S\epsilon D_l$.}
        \label{fig:alpha}
    \end{subfigure}
    \hfill
    \begin{subfigure}{.48\linewidth}
        \centering
        \input{eq/alphaiS.tex}
        \caption{The morphism $\alpha\iota S = S^2$.}
        \label{fig:alphaiS}
    \end{subfigure}
    \caption{Morphisms $\alpha$ and $\alpha\iota S$.}
\end{figure}

Unlike the previous cases, the compositions $gf$ and $fg$ are not symmetric.
We first check the composition $fg \colon \bn{K'} \to \bn{K'}$, computed as $(\id \circ D) + (\alpha \circ \iota S)$.
As shown in \Cref{fig:alphaiS}, the relations (S) and (S$_\bullet$) imply that $\alpha \iota S = S^2$.
Thus, $fg = D + S^2$, which is exactly $H\cdot \id$.
In this case, we may simply set the homotopies $h'$ and $k'$ to zero.

Now consider $gf \colon \bn{K} \to \bn{K}$:
\[gf=
\begin{bmatrix}
    D&\gz&\gz&D \alpha\\
    \gz&\gz&\gz&\gz\\
    \gz&\gz&\gz&\gz\\
    \iota S&\gz&\gz&\iota S \alpha
\end{bmatrix},
\]
as in \Cref{fig:typeC_gf}.
\begin{figure}
    \centering
    \begin{tikzcd}
    &                                               &&\bn{K_{10}}&&\\
    \bn{K}&=&\bn{K_{00}}&&&\bn{K_{11}}\\
    &                                               &&&\bn{K_{01}}&\\
    &                                               &&&&\\
    &                                               &&&&\\
    &                                               &&\bn{K_{10}}&&\\
    \bn{K}&=                                      &\bn{K_{00}}&&&\bn{K_{11}}\\
    &                                               &&&\bn{K_{01}}&
    \arrow[from=2-1,to=7-1,"gf",red]\arrow[from=6-4,to=7-6,"S_r"]
    \arrow[from=2-3,to=7-6,"\iota S",red,near start]\arrow[from=2-6,to=7-3,"D\alpha=0",red,dotted,near start]
    \arrow[from=2-3,to=1-4,"S_l"]\arrow[from=1-4,to=2-6,"S_r"]\arrow[from=2-3,to=3-5,"S_r",crossing over]\arrow[from=3-5,to=2-6,"S_l"]
    \arrow[from=2-3,to=7-3,"D",red]\arrow[from=2-6,to=7-6,"\iota S\alpha",red]
    \arrow[from=7-3,to=6-4,"S_l"]\arrow[from=7-3,to=8-5,"S_r"]\arrow[from=8-5,to=7-6,"S_l"]
\end{tikzcd}
    \caption{(Type C) $gf$ is drawn by red arrows.}
    \label{fig:typeC_gf}
\end{figure}

Among nontrivial entries, the map $D\alpha \colon \bn{K_{11}}\to\bn{K_{00}}$ is $D\alpha = (DS)\epsilon D_l = 0$.
The map
\[
gf + H\cdot \id=
\begin{bmatrix}
    D+H\cdot\id&\gz&\gz&\gz\\
    \gz&H\cdot\id&\gz&\gz\\
    \gz&\gz&H\cdot\id&\gz\\
    \iota S&\gz&\gz&\iota S \alpha+H\cdot\id
\end{bmatrix}
\]
is represented by red arrows in \Cref{fig:typeC_h}.
Take a homotopy $h$ between $gf$ and $H\cdot\id$ as green dashed arrows in \Cref{fig:typeC_h}.
In a matrix form,
\[h=
\begin{bmatrix}
    \gz&S_l&\gz&\gz\\
    \gz&\gz&\gz&\epsilon D_l\\
    \gz&\gz&\gz&S_l\\
    \gz&\gz&\iota&\gz
\end{bmatrix}.
\]
\begin{figure}
    \centering
    \resizebox{.95\textwidth}{!}{\begin{tikzcd}
    &                                               &&&\bn{K_{10}}&&\\
    \bn{K}&=                                        &\bn{K_{00}}&&&&&&\bn{K_{11}}\\
    &                                               &&&&&\bn{K_{01}}&\\
    &                                               &&&&\\
    &                                               &&&&\\
    &                                               &&&\bn{K_{10}}&&\\
    \bn{K}&=                                      &\bn{K_{00}}&&&&&&\bn{K_{11}}\\
    &                                               &&&&&\bn{K_{01}}&
    \arrow[from=2-1,to=7-1,"gf+H\cdot\id",red]\arrow[from=1-5,to=6-5,"H\cdot\id",red]\arrow[from=2-9,to=6-5,"J_r^{-1}\epsilon D_l" description,dashed,green,near start]\arrow[from=6-5,to=7-9,"S_r"]\arrow[from=1-5,to=7-3,"S_l" description,green,dashed]
    \arrow[from=2-3,to=7-9,"\iota S",red,crossing over]
    \arrow[from=2-3,to=1-5,"S_l"]\arrow[from=1-5,to=2-9,"S_r"]\arrow[from=2-3,to=3-7,"S_r",crossing over]\arrow[from=3-7,to=2-9,"S_l"]
    \arrow[from=2-3,to=7-3,"D+H\cdot\id"',red]\arrow[from=3-7,to=8-7,"H\cdot\id",red,crossing over]\arrow[from=2-9,to=7-9,"\iota S\alpha + H\cdot\id",red]
    \arrow[from=3-7,to=7-9,"\iota J_l" description,dashed,green,near start,crossing over]\arrow[from=2-9,to=8-7,"S_l" description, dashed,green,near start,crossing over]
    \arrow[from=7-3,to=6-5,"S_l"]\arrow[from=7-3,to=8-7,"S_r"]\arrow[from=8-7,to=7-9,"S_l"]
\end{tikzcd}}
    \caption{(Type C) Red arrow depicts the map $gf+H\cdot\id$, and green dashed arrow depicts the homotopy $h$.}
    \label{fig:typeC_h}
\end{figure}
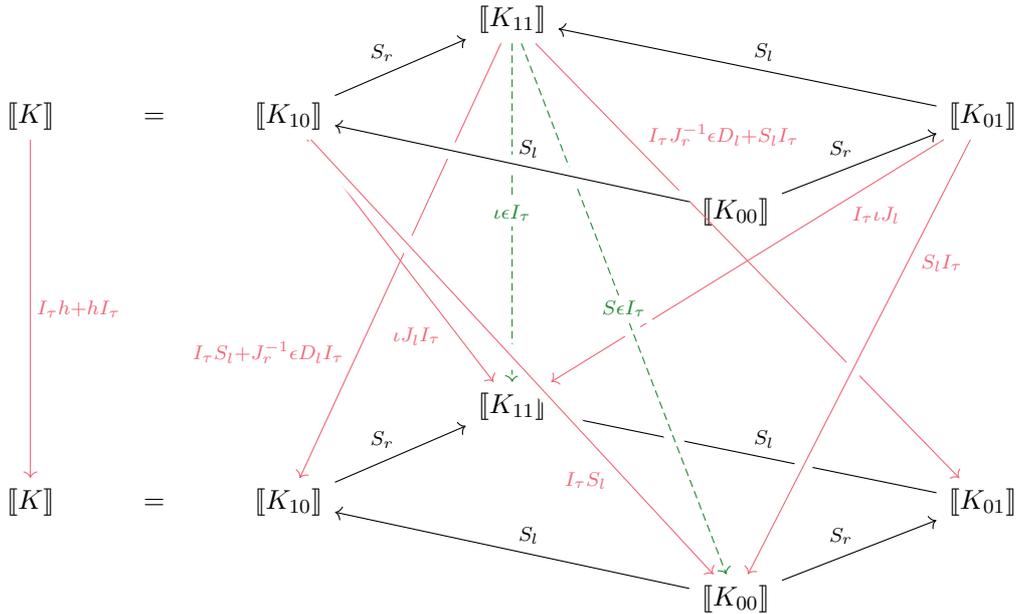
\begin{figure}
    \centering
    \resizebox{.9\textwidth}{!}{\begin{tikzcd}
    &                                               &&&\bn{K_{11}}&&\\
    \bn{K}&=                                        &\bn{K_{10}}&&&&&&\bn{K_{01}}\\
    &                                               &&&&&\bn{K_{00}}&\\
    &                                               &&&&\\
    &                                               &&&&\\
    &                                               &&&\bn{K_{11}}&&\\
    \bn{K}&=                                        &\bn{K_{10}}&&&&&&\bn{K_{01}}\\
    &                                               &&&&&\bn{K_{00}}&
    \arrow[from=8-7,to=7-3,"S_l"']\arrow[from=8-7,to=7-9,"S_r"]\arrow[from=7-3,to=6-5,"S_r"]\arrow[from=7-9,to=6-5,"S_l"']
    %%%%
    \arrow[from=1-5,to=7-3,"I_\tau S_l+J_r^{-1}\epsilon D_lI_\tau"',red,near end]\arrow[from=1-5,to=7-9,"I_\tau J_r^{-1}\epsilon D_l + S_lI_\tau",red,near start]
    \arrow[from=2-3,to=6-5,"\iota J_lI_\tau"',red,near end]\arrow[from=2-9,to=6-5,"I_\tau\iota J_l",red,near start]
    %%%%
    \arrow[from=1-5,to=6-5,"\iota\epsilon I_\tau" description,dashed,green,crossing over]
    \arrow[from=1-5,to=8-7,"S\epsilon I_\tau" description,dashed,green,crossing over]
    %%%%
    \arrow[from=2-3,to=8-7,"I_\tau S_l"',red,crossing over,near end]
    \arrow[from=2-9,to=8-7,"S_l I_\tau",red,crossing over,near start]
    %%%%
    \arrow[from=3-7,to=2-3,"S_l"',crossing over]\arrow[from=3-7,to=2-9,"S_r",crossing over]\arrow[from=2-3,to=1-5,"S_r",crossing over]\arrow[from=2-9,to=1-5,"S_l"',crossing over]
    \arrow[from=2-1,to=7-1,"I_\tau h+hI_\tau",red]
\end{tikzcd}}
    \caption{(Type C) Red arrow depicts the map $I_\tau h+hI_\tau$, and green dashed arrow depicts the homotopy $h$. Caution: the complex $\bn{K}$ is represented in a different angle than previous diagrams.}
    \label{fig:typeC_k}
\end{figure}
While the homotopy $h$ is formally obtained by composing the homotopies from a Reidemeister move II with Alishahi's map $h_0$, we provide the direct computation here for clarity.
We have to show
\begin{align*}
    gf+H\cdot\id 
    &= \begin{bmatrix}
    D+H\cdot\id&\gz&\gz&\gz\\
    \gz&H\cdot\id&\gz&\gz\\
    \gz&\gz&H\cdot\id&\gz\\
    \iota S&\gz&\gz&\iota S \alpha+H\cdot\id
\end{bmatrix}\\
&=
\begin{bmatrix}
    S_l^2&\gz&\gz&\gz\\
    \gz&J_r^{-1}\epsilon D_lS_r+S_l^2&J_r^{-1}\epsilon D_lS_l&\gz\\
    \gz&\gz&S_l^2&\gz\\
    \iota J_lS_r&\gz&\gz&S_rJ_r^{-1}\epsilon D_l+S_l^2
\end{bmatrix}
=dh+hd.
\end{align*}
Let's check it entrywise.
\begin{itemize}
    \item $(1,1)$: Holds by the (NC) relation.
    \item $(2,2)$: By isotopy, $J_r^{-1}\epsilon D_l S_r = D_l$. Thus, $J_r^{-1}\epsilon D_l S_r + S_l^2 = D_l + (D_l + H\cdot \id) = H\cdot \id$.
    \item $(2,3)$: Holds since $D_l S_l = 0$.
    \item $(3,3)$: $S_l^2 = D_l + H\cdot \id$. In $K_{01}$, $D_l$ is applied to a single connected strand, so $D_l = 0$.
    \item $(4,1)$: Holds by isotopy.
    \item $(4,4)$: We must show $\iota S^2 \epsilon D_l + H\cdot \id = S_r J_r^{-1} \epsilon D_l + S_l^2$. From $S_l^2 = D_l + H\cdot \id$, this reduces to $(\iota S^2 \epsilon + S_r J_r^{-1} \epsilon + \id)D_l = 0$. By the (4-Tu) relation (see \Cref{fig:h44}), the term in parentheses is $\iota J_l S_l$, and $(\iota J_l S_l)D_l = \iota J_l (S_l D_l) = 0$.
\end{itemize}
\begin{figure}
    \centering
    \input{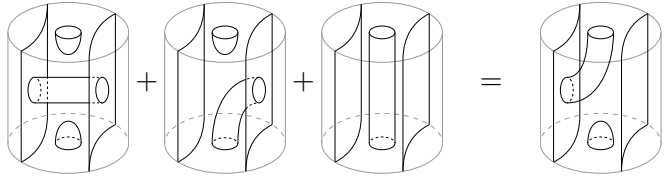}
    \caption{$\iota S^2\epsilon+S_rJ_r^{-1}\epsilon+\id=\iota J_lS_l$.}
    \label{fig:h44}
\end{figure}

Finally, we find a homotopy $k$ such that $I_\tau h + h I_\tau = dk + kd$. We define $k$ as
\[ 
k=
\begin{bmatrix}
    \gz&\gz&\gz&S\epsilon I_\tau\\
    \gz&\gz&\gz&\gz\\
    \gz&\gz&\gz&\gz\\
    \gz&\gz&\gz&\iota\epsilon I_\tau
\end{bmatrix},
\]
and it is represented as green dashed arrows in \Cref{fig:typeC_k}.
Let's verify the computation:
\begin{align*}
I_\tau h+hI_\tau &=
\begin{bmatrix}
    \gz&I_\tau S_l&S_lI_\tau&\gz\\
    \gz&\gz&\gz&I_\tau S_l+J_r^{-1}\epsilon D_lI_\tau\\
    \gz&\gz&\gz&I_\tau J_r^{-1}\epsilon D_l+S_lI_\tau\\
    \gz&\iota J_l I_\tau&I_\tau\iota J_l&\gz
\end{bmatrix}
\\&=
\begin{bmatrix}
    \gz&S\epsilon I_\tau S_r&S\epsilon I_\tau S_l&\gz\\
    \gz&\gz&\gz&S_lS\epsilon I_\tau\\
    \gz&\gz&\gz&S_rS\epsilon I_\tau\\
    \gz&\iota\epsilon I_\tau S_r&\iota\epsilon I_\tau S_l&\gz
\end{bmatrix}=dk+kd.
\end{align*}
For entry $(2,4)$, $I_\tau S_l + J_r^{-1} \epsilon D_l I_\tau = (S_r + J_r^{-1} \epsilon D_l) I_\tau$.
As shown in \Cref{fig:h24}, $S_r + J_r^{-1} \epsilon D_l = S_l S \epsilon$.
The entry $(3,4)$ is similar.
Other entries follow from $I_\tau S_l = S_r I_\tau$ and isotopy.
\begin{figure}
    \centering
    \input{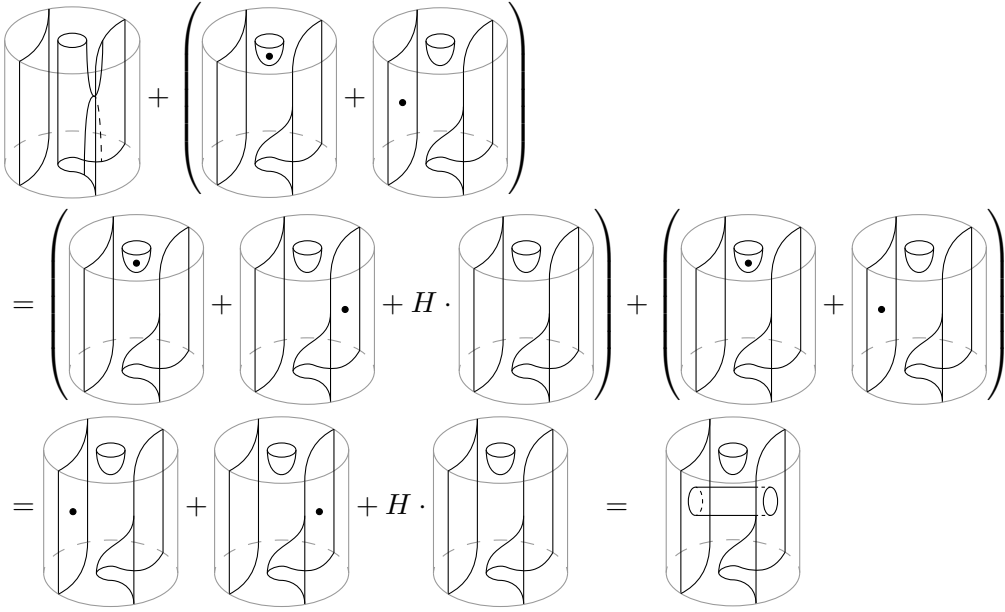}
    \caption{$S_r+J_r^{-1}\epsilon D_l = S_lS\epsilon$.}
    \label{fig:h24}
\end{figure}

\begin{rmk}
    In this paper, we focus on using transvergent diagrams to construct involutive Bar-Natan homology. Alternatively, one could define involutive Bar-Natan homology using intravergent diagrams (i.e., projections onto the $xz$-plane where $\mathrm{Fix}(\tau)$ is represented by a single point).
    Since it was shown in \cite{Chen-Yang:2026} that these two models of involutive Bar-Natan homology are essentially equivalent, one could provide an alternative proof for the Type C case by shifting to the intravergent setting. In that context, the Type C equivariant crossing change is represented by a single crossing change, allowing for an alternative proof analogous to the Type B case.
\end{rmk}

    \section{Examples}\label{sec:ex}

We computed the equivariant torsion order $\eqord(K)$ for all prime knots $K$ with ordinary crossing number up to $9$ that appear in Sakuma's table of strongly invertible knots \cite[Appendix]{Sakuma:1986}.
Among these, we identified five strongly invertible knots where $u(K)=1$ but $\eqord(K)=2$, and therefore, in particular, $u(K) < \equ(K)$.
We summarize our findings in the following propositions.
Note that a given knot may admit more than one strong involution $\tau \colon S^3 \to S^3$.
The additional subscripts $a$ and $b$ indicate specific symmetries.

\begin{prop}\label{prop:mainex}
    For the strongly invertible knots $K \in \{7_{7b}, 8_{21a}\}$, we have $u(K) = 1$ and $\equ(K) = 2$. For $K \in \{9_{28a}, 9_{34}, 9_{39}\}$, we find $u(K) = 1$ and $2 \leq \equ(K) \leq 3$.
\end{prop}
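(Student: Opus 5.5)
The plan has two parts: lower bounds from \Cref{thm:main} and upper bounds from explicit equivariant unknotting sequences. The ordinary unknotting numbers $u(K)=1$ for $7_7$, $8_{21}$, $9_{28}$, $9_{34}$, $9_{39}$ are standard and can be quoted from the knot tables (KnotInfo). For the lower bound, we compute $\eqBN(K)$ over $\FF[H]$ for each of the five strongly invertible knots. We start from a transvergent diagram realizing the symmetry in Sakuma's table, form the cone $\eqCBN(K)=\Cone(\CBN(K)\xrightarrow{1+I_\tau}\CBN(K))$, and compute its homology using Sano's program. We then read off the $H$-torsion summands. In each case we expect a summand $\FF[H]/(H^2)$, so $\eqord(K)=2$, and \Cref{thm:main} gives $\equ(K)\ge 2$. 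Since $\ord(K)\le u(K)=1$ for these knots, the order-two torsion must come from the involutive structure (the cone). As a sanity check, we will verify that the ordinary $\BN(K)$ has only $H$-torsion of order at most $1$.

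For the upper bounds we exhibit equivariant crossing changes on transvergent diagrams. For $7_{7b}$ and $8_{21a}$ we look for either two Type B/C changes or a single Type A change (cost $2$) that produces the unknot. Concretely, we examine the on-axis crossings of the transvergent diagram: changing one crossing on the axis (Type B) should reduce the knot to a strongly invertible knot with $u=1$. An unknotting crossing of that knot can then be realized on the axis or as a Type C move. Alternatively, we change a symmetric pair of off-axis crossings whose change is known to unknot. For $9_{28a}$, $9_{34}$, $9_{39}$ we only need total cost $3$, for example a Type B change followed by a Type A pair. We will identify the resulting knots by computing invariants (Jones polynomial, or SnapPy identification) after each move.

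The main obstacle is the upper bound $\equ\le 2$ for $7_{7b}$ and $8_{21a}$. The equivariant moves must be found by hand, and they must be compatible with the specific involution labelled $a$ or $b$. Our first attempt will be the symmetric off-axis crossing pair adjacent to the axis, since changing both crossings of such a pair often cancels a twist region. If that fails, we will search the on-axis crossings for a Type B change yielding a knot with $\equ=1$, for instance one that a single Type C move unknots.
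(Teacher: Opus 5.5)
Your plan matches the paper's: $u(K)=1$ is quoted from KnotInfo, the lower bound $\equ(K)\ge 2$ comes from computing $\eqord(K)=2$ with Sano's program and applying \Cref{thm:main}, and the upper bounds come from explicitly constructed equivariant unknotting sequences on transvergent diagrams. Two small cautions for carrying out the search. First, a knot reached by one Type B move needs $\equ=1$, not merely $u=1$, for the two-move strategy to close. Second, the final diagram should be certified as the unknot by an actual diagrammatic simplification, since the Jones polynomial cannot do this.
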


\begin{prop}\label{prop:diff_equ}
    We have $\equ(7_{7a}) = \equ(8_{21b}) = 1$, whereas $\equ(7_{7b})=\equ(8_{21a})=2$. In particular, the same underlying knot $K\in\{7_7, 8_{21}\}$ can yield distinct equivariant unknotting numbers depending on the choice of involution.
\end{prop}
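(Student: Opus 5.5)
Proposition~\ref{prop:diff_equ} has two halves. The values $\equ(7_{7b})=\equ(8_{21a})=2$ are exactly the first sentence of \Cref{prop:mainex}, which I take as given. That result combines the lower bound $2\le\eqord(K)\le\equ(K)$ from \Cref{thm:main} with an explicit two-move equivariant unknotting. So the new content is the equalities $\equ(7_{7a})=\equ(8_{21b})=1$, and then the observation that $7_{7a}$ and $7_{7b}$ (respectively $8_{21a}$ and $8_{21b}$) share an underlying knot but have different values of $\equ$.

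For the lower bound, I note that $7_7$ and $8_{21}$ are nontrivial knots. Hence $\equ(K)\ge u(K)\ge 1$ for any strong involution, using the trivial bound $u(K)\le\equ(K)$ from \Cref{subsec:equ}. No Khovanov-theoretic input is needed here.

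For the upper bound, I must exhibit a single equivariant crossing change of Type B or Type C, since a Type A move already costs $2$. Concretely, I would take Sakuma's transvergent diagram of $7_{7a}$ (respectively $8_{21b}$) and search for an on-axis crossing whose change produces the unknot. That is a Type B move. Because $u(7_7)=u(8_{21})=1$, there is an unknotting crossing in some diagram; the aim is to find one realized on the axis of the $a$- or $b$-symmetry.

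I would first look in the standard diagram at crossings lying on the axis. For each candidate, I would change it and simplify the result by (involutive) Reidemeister moves until the unknot is evident. If none works, I would try a Type C move: two fixed points that are adjacent along the axis are made to pass through each other, as in \Cref{fig:ecc}. It may be necessary to first isotope the diagram equivariantly so that the relevant strands sit near the axis.

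The main obstacle is finding and certifying this explicit diagrammatic move. Nothing in the paper's machinery produces it, so it has to be done by hand: draw the transvergent diagram with the axis, perform the move, and verify that the result is the unknot. A secondary check, and a sanity test of the whole proposition, is that $\eqord(7_{7a})\le 1$ and $\eqord(8_{21b})\le 1$ from the computed table. This is consistent with \Cref{thm:main} and confirms that the two involutions are genuinely distinguished by $\eqord$.

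With both halves established, the final sentence of the proposition follows immediately: $1\ne 2$.
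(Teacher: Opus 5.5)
Your proposal follows the same route as the paper, whose argument for this proposition is implicit in the Appendix. The lower bounds come from $u(K)\le\equ(K)$ for $7_{7a}$ and $8_{21b}$, and from $\eqord(K)=2$ via \Cref{thm:main} for $7_{7b}$ and $8_{21a}$; the upper bounds come from explicit equivariant unknotting sequences in transvergent diagrams. As in the paper, the decisive step is actually exhibiting the single Type B or Type C move for $7_{7a}$ and $8_{21b}$ (and the cost-$2$ sequences for $7_{7b}$ and $8_{21a}$), which the paper asserts from hand constructions rather than displaying, so your argument is complete only once that search is carried out.
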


These five knots are illustrated in \Cref{fig:equ_examples}.

\begin{rmk}
    \Cref{prop:mainex} and \Cref{prop:diff_equ} can also be proved without our method. For a strongly invertible knot $(K,\tau)$, consider the images $q(K)$ and $q(\mathrm{Fix}(\tau))$ under the quotient map $q\colon S^3 \to S^3/\tau \cong S^3$. Here $q(\mathrm{Fix}(\tau))$ is the unknot and $q(K)$ is an arc with endpoints on $q(\mathrm{Fix}(\tau))$. By taking the union $q(K)$ with either one of connected component of ${q(\mathrm{Fix}(\tau)) \setminus q(\mathrm{Fix}(\tau)\cap K)}$, one obtains two \emph{quotient knots} $q_1(K)$, $q_2(K)$ in $S^3$.
    
    In \cite{Boyle-Chen:2026}, the authors defined $u_X(K)$ as the minimal number of crossing changes required to transform a given strongly invertible knot $K$ into the unknot, using only equivariant crossing changes of Type $X\in\{A,B,C\}$. They provide lower bounds for each $u_X(K)$ in terms of the quotient knots:
    \begin{itemize}
        \item \cite[Theorem 1.9]{Boyle-Chen:2026} $u_A(K) \geq \max(u(q_1(K)), u(q_2(K)))$,
        \item \cite[Theorem 1.10]{Boyle-Chen:2026} $u_B(K) \geq u_4(q_1(K)) + u_4(q_2(K))$,
        \item \cite[Theorem 1.11]{Boyle-Chen:2026} $u_C(K) \geq u_{nb}(q_1(K)) + u_{nb}(q_2(K))$.
    \end{itemize}
    Here $u_4(K)$ denotes the $4$-move unknotting number of a knot $K$ and $u_{nb}(K)$ is the non-orientable band unknotting number of a knot $K$ (see \cite{Boyle-Chen:2026} for precise definitions). For each $K \in \{7_{7b}, 8_{21a}, 9_{28a}, 9_{34}, 9_{39}\}$, we verified that both quotient knots $q_1(K)$ and $q_2(K)$ are nontrivial. Consequently, $u_B(K)$ and $u_C(K)$ are both at least $2$, and therefore $\equ(K)\geq2$.
\end{rmk}

\begin{rmk}
    These are not the first known examples of strongly invertible knots for which $u(K) < \equ(K)$. Indeed, in \cite{Boyle-Chen:2026} it was shown that $\equ(T_3 \# T_3) > u(T_3 \# T_3) = 2$, where $T_3$ is the $3$-twist knot.
\end{rmk}

\begin{rmk}
    The knot $K=8_{21a}$ has $\ls(K) = -4$ and $\us(K) = -2$. As noted in \Cref{subsec:equ}, the inequality $\equ(K) \geq 2$ for this specific knot can also be deduced from these equivariant Rasmussen invariants.
\end{rmk}

\begin{figure}
    \centering
    \includegraphics[width=\textwidth]{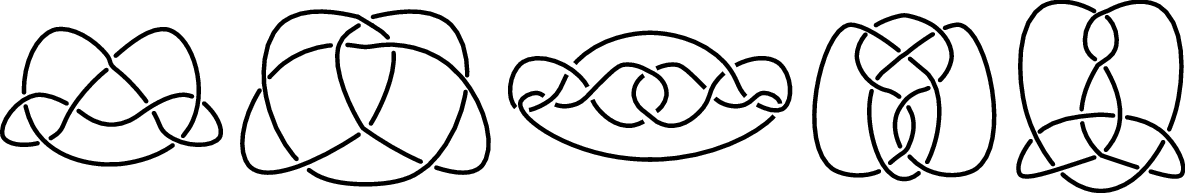}
    \caption{Examples of prime knots $K$ with $u(K) = 1 < 2 \leq \equ(K)$. These knots are $7_{7b}$, $8_{21a}$, $9_{28a}$, $9_{34}$, and $9_{39}$, in sequence.}
    \label{fig:equ_examples}
\end{figure}

    \section*{Appendix: table of invariants}
We have tabulated the following invariants for prime, strongly invertible knots with crossing numbers up to $9$: 
(1) the maximal $H$-torsion order $\ord(K)$, 
(2) the Rasmussen invariant $s(K)$, 
(3) the equivariant Rasmussen invariants $\us(K)$ and $\ls(K)$, and 
(4) the equivariant maximal $H$-torsion order $\eqord(K)$. 
Additionally, we include the unknotting number $u(K)$ sourced from KnotInfo \cite{knotinfo}.

The lower bound for the equivariant unknotting number $\equ(K)$ obtained in this appendix is 
\[\max\left\{u(K), |\us(K)|/2, |\ls(K)|/2, \eqord(K)\right\}.\]
We also establish an upper bound for $\equ(K)$ by explicitly constructing equivariant unknotting sequences using transvergent knot diagrams.

To compute $\us(K)$, $\ls(K)$, $\ord(K)$, and $\eqord(K)$, we utilized Taketo Sano's program \cite{Sano:YUI}.
For instance, the equivariant Bar-Natan homology $\eqBN{7_{7b}}$ can be obtained with the following command:
\begin{center}
    \texttt{ykh khi [[2,9,3,10], [4,2,5,1], [6,13,7,14], [8,3,9,4], [10,6,11,5], [12,7,13,8], [14,12,1,11]] -t F2 -c H}
\end{center}
where the input list represents a planar diagram code for a transvergent diagram of $7_{7b}$.

Note that we follow Sakuma's naming convention for strongly invertible knots; see \cite[Appendix]{Sakuma:1986} for further details.
\newgeometry{left=1in,right=1in}
\twocolumn
\begin{center}

\newcolumntype{u}{>{\columncolor[HTML]{D3D3D3}}c}

\tablefirsthead{%
\hline
Name&$s$&$\ord$&$u$&$\ls$&$\us$&$\eqord$&$\equ$ \\ \hline
}
\tablehead{%
\hline
Name&$s$&$\ord$&$u$&$\ls$&$\us$&$\eqord$&$\equ$ \\ \hline
}

\tabletail{%
\hline
}
\tablelasttail{%
\hline
}
\begin{supertabular}{|c||c|c|u|c|c|c|u|}
$3_1$&-2&1&1&-2&-2&1&1 \\ \hline
$4_1$&0&1&1&0&0&1&1 \\ \hline
$5_1$&-4&1&2&-4&-4&1&2 \\ \hline
$5_{2a}$&-2&1&1&-2&-2&1&1 \\ \hline
$5_{2b}$&-2&1&1&-2&-2&1&1 \\ \hline
$6_{1a}$&0&1&1&0&0&1&1 \\ \hline
$6_{1b}$&0&1&1&0&0&1&1 \\ \hline
$6_{2a}$&-2&1&1&-2&-2&1&1 \\ \hline
$6_{2b}$&-2&1&1&-2&-2&1&$[1,2]$ \\ \hline
$6_3$&0&1&1&0&0&1&1 \\ \hline
$7_1$&-6&1&3&-6&-6&1&3 \\ \hline
$7_{2a}$&-2&1&1&-2&-2&1&1 \\ \hline
$7_{2b}$&-2&1&1&-2&-2&1&1 \\ \hline
$7_{3a}$&4&1&2&4&4&1&2 \\ \hline
$7_{3b}$&4&1&2&4&4&1&2 \\ \hline
$7_{4a}$&2&1&2&2&2&1&2 \\ \hline
$7_{4b}$&2&1&2&2&2&1&2 \\ \hline
$7_{5a}$&-4&1&2&-4&-4&1&2 \\ \hline
$7_{5b}$&-4&1&2&-4&-4&1&2 \\ \hline
$7_{6a}$&-2&1&1&-2&-2&1&1 \\ \hline
$7_{6b}$&-2&1&1&-2&-2&1&$[1,2]$ \\ \hline
$7_{7a}$&0&1&1&0&0&1&1 \\ \hline
\rowcolor{yellow}$7_{7b}$&0&1&1&0&0&2&2 \\ \hline
$8_{1a}$&0&1&1&0&0&1&1 \\ \hline
$8_{1b}$&0&1&1&0&0&1&1 \\ \hline
$8_{2a}$&-4&1&2&-4&-4&1&2 \\ \hline
$8_{2b}$&-4&1&2&-4&-4&1&$[2,3]$ \\ \hline
$8_3$&0&1&2&0&0&1&2 \\ \hline
$8_{4a}$&-2&1&2&-2&-2&1&2 \\ \hline
$8_{4b}$&-2&1&2&-2&-2&1&2 \\ \hline
$8_{5a}$&4&1&2&4&4&2&$[2,3]$ \\ \hline
$8_{5b}$&4&1&2&4&4&1&$[2,3]$ \\ \hline
$8_{6a}$&-2&1&2&-2&-2&1&2 \\ \hline
$8_{6b}$&-2&1&2&-2&-2&1&2 \\ \hline
$8_{7a}$&2&1&1&2&2&1&1 \\ \hline
$8_{7b}$&2&1&1&2&2&1&$[1,2]$ \\ \hline
$8_{8a}$&0&1&2&0&0&1&2 \\ \hline
$8_{8b}$&0&1&2&0&0&1&2 \\ \hline
$8_9$&0&1&1&0&0&1&1 \\ \hline
$8_{10}$&2&1&2&2&2&1&2 \\ \hline
$8_{11a}$&-2&1&1&-2&-2&1&1 \\ \hline
$8_{11b}$&-2&1&1&-2&-2&1&$[1,2]$ \\ \hline
$8_{12}$&0&1&2&0&0&1&2 \\ \hline
$8_{13a}$&0&1&1&0&0&1&1 \\ \hline
$8_{13b}$&0&1&1&0&0&1&$[1,2]$ \\ \hline
$8_{14a}$&2&1&1&2&2&1&$[1,2]$ \\ \hline
$8_{14b}$&2&1&1&2&2&1&$[1,2]$ \\ \hline
$8_{15a}$&-4&1&2&-4&-4&2&2 \\ \hline
$8_{15b}$&-4&1&2&-4&-4&1&2 \\ \hline
$8_{16}$&-2&1&2&-2&-2&1&2 \\ \hline
$8_{18a}$&0&1&2&0&0&2&2 \\ \hline
$8_{18b}$&0&1&2&0&0&2&2 \\ \hline
$8_{19}$&6&2&3&6&6&2&3 \\ \hline
$8_{20}$&0&1&1&0&0&1&1 \\ \hline
\rowcolor{yellow}$8_{21a}$&-2&1&1&-4&-2&2&2 \\ \hline
$8_{21b}$&-2&1&1&-2&-2&1&1 \\ \hline
$9_1$&-8&1&4&-8&-8&1&4 \\ \hline
$9_{2a}$&-2&1&1&-2&-2&1&1 \\ \hline
$9_{2b}$&-2&1&1&-2&-2&1&1 \\ \hline
$9_{3a}$&6&1&3&6&6&1&3 \\ \hline
$9_{3b}$&6&1&3&6&6&1&3 \\ \hline
$9_{4a}$&-4&1&2&-4&-4&1&2 \\ \hline
$9_{4b}$&-4&1&2&-4&-4&1&2 \\ \hline
$9_{5a}$&2&1&2&2&2&1&2 \\ \hline
$9_{5b}$&2&1&2&2&2&1&2 \\ \hline
$9_{6a}$&-6&1&3&-6&-6&1&3 \\ \hline
$9_{6b}$&-6&1&3&-6&-6&1&3 \\ \hline
$9_{7a}$&-4&1&2&-4&-4&1&2 \\ \hline
$9_{7b}$&-4&1&2&-4&-4&1&2 \\ \hline
$9_{8a}$&-2&1&2&-2&-2&1&2 \\ \hline
$9_{8b}$&-2&1&2&-2&-2&1&2 \\ \hline
$9_{9a}$&-6&1&3&-6&-6&1&3 \\ \hline
$9_{9b}$&-6&1&3&-6&-6&1&3 \\ \hline
$9_{10a}$&4&1&3&4&4&1&3 \\ \hline
$9_{10b}$&4&1&3&4&4&1&3 \\ \hline
$9_{11a}$&4&1&2&4&4&1&$[2,3]$ \\ \hline
$9_{11b}$&4&1&2&4&4&1&2 \\ \hline
$9_{12a}$&-2&1&1&-2&-2&1&1 \\ \hline
$9_{12b}$&-2&1&1&-2&-2&1&$[1,2]$ \\ \hline
$9_{13a}$&4&1&3&4&4&1&3 \\ \hline
$9_{13b}$&4&1&3&4&4&1&3 \\ \hline
$9_{14a}$&0&1&1&0&0&1&1 \\ \hline
$9_{14b}$&0&1&1&0&0&1&1 \\ \hline
$9_{15a}$&2&1&2&2&2&1&2 \\ \hline
$9_{15b}$&2&1&2&2&2&1&2 \\ \hline
$9_{16a}$&6&1&3&6&6&2&3 \\ \hline
$9_{16b}$&6&1&3&6&6&1&3 \\ \hline
$9_{17a}$&-2&1&2&-2&-2&1&2 \\ \hline
$9_{17b}$&-2&1&2&-2&-2&2&2 \\ \hline
$9_{18a}$&-4&1&2&-4&-4&1&2 \\ \hline
$9_{18b}$&-4&1&2&-4&-4&1&2 \\ \hline
$9_{19a}$&0&1&1&0&0&1&$[1,2]$ \\ \hline
$9_{19b}$&0&1&1&0&0&1&$[1,2]$ \\ \hline
$9_{20a}$&-4&1&2&-4&-4&1&2 \\ \hline
$9_{20b}$&-4&1&2&-4&-4&1&$[2,3]$ \\ \hline
$9_{21a}$&2&1&1&2&2&1&$[1,2]$ \\ \hline
$9_{21b}$&2&1&1&2&2&1&$[1,2]$ \\ \hline
$9_{22}$&2&1&1&2&2&1&$[1,3]$ \\ \hline
$9_{23a}$&-4&1&2&-4&-4&1&2 \\ \hline
$9_{23b}$&-4&1&2&-4&-4&2&2 \\ \hline
$9_{24}$&0&1&1&0&0&1&$[1,2]$ \\ \hline
$9_{25}$&-2&1&2&-2&-2&1&2 \\ \hline
$9_{26a}$&2&1&1&2&2&1&1 \\ \hline
$9_{26b}$&2&1&1&2&2&1&$[1,2]$ \\ \hline
$9_{27a}$&0&1&1&0&0&1&$[1,2]$ \\ \hline
$9_{27b}$&0&1&1&0&0&1&1 \\ \hline
\rowcolor{yellow}$9_{28a}$&-2&1&1&-2&-2&2&$[2,3]$ \\ \hline
$9_{28b}$&-2&1&1&-2&-2&1&$[1,2]$ \\ \hline
$9_{29}$&-2&1&2&-2&-2&2&$[2,3]$ \\ \hline
$9_{30}$&0&1&1&0&0&1&$[1,3]$ \\ \hline
$9_{31a}$&2&1&2&2&2&1&2 \\ \hline
$9_{31b}$&2&1&2&2&2&2&2 \\ \hline
\rowcolor{yellow}$9_{34}$&0&1&1&0&0&2&2 \\ \hline
$9_{35a}$&-2&1&3&-2&-2&2&3 \\ \hline
$9_{35b}$&-2&1&3&-2&-2&2&3 \\ \hline
$9_{36}$&4&1&2&4&4&1&2 \\ \hline
$9_{37a}$&0&1&2&0&0&2&2 \\ \hline
$9_{37b}$&0&1&2&0&0&2&2 \\ \hline
$9_{38}$&-4&1&3&-4&-4&2&3 \\ \hline
\rowcolor{yellow}$9_{39}$&2&1&1&2&2&2&$[2,3]$ \\ \hline
$9_{40a}$&-2&1&2&-2&-2&2&2 \\ \hline
$9_{40b}$&-2&1&2&-2&-2&2&2 \\ \hline
$9_{41}$&0&1&2&0&0&2&2 \\ \hline
$9_{42}$&0&1&1&0&0&1&1 \\ \hline
$9_{43}$&-4&1&2&-4&-4&1&$[2,3]$ \\ \hline
$9_{44}$&0&1&1&0&0&1&1 \\ \hline
$9_{45}$&-2&1&1&-2&-2&1&1 \\ \hline
$9_{46a}$&0&1&2&-2&0&1&2 \\ \hline
$9_{46b}$&0&1&2&-2&0&1&2 \\ \hline
$9_{47}$&2&1&2&2&2&1&2 \\ \hline
$9_{48a}$&-2&1&2&-2&-2&1&2 \\ \hline
$9_{48b}$&-2&1&2&-2&-2&1&2 \\ \hline
$9_{49}$&4&1&3&4&4&1&3 \\

\end{supertabular}
\end{center}

\onecolumn
\restoregeometry

\printbibliography
\end{document}